\newcommand{\be}[1]{\begin{equation} \label{#1}}
\newcommand{\ee}{\end{equation}}
\newcommand{\ii}{\mathrm{i}}
\begin{document}

\title{Inverting the local geodesic ray transform of higher rank tensors}
\author{Maarten V. de Hoop \thanks{Simons Chair in Computational and
    Applied Mathematics and Earth Science, Rice University, Houston,
    TX, 77005, USA (mdehoop@rice.edu)}
\and Gunther Uhlmann \thanks{Department of Mathematics, University of Washington,
Seattle, WA, 98195-4350, US, HKUST Jockey Club Institute for Advanced Study, HKUST, Clear Water Bay, Kowloon, Hong Kong, China (gunther@math.washington.edu)}
\and Jian Zhai \thanks{Department of Mathematics, University of Washington, Seattle, WA, 98195-4350, USA
  (jian.zhai@outlook.com)}}

\maketitle

\pagestyle{myheadings}
\thispagestyle{plain}
\markboth{DE HOOP, UHLMANN, and ZHAI}{Local geodesic ray transform of higher rank tensors}

\begin{abstract}
Consider a Riemannian manifold in dimension $n\geq 3$ with strictly
convex boundary. We prove the local invertibility, up to potential
fields, of the geodesic ray transform on tensor fields of rank four
near a boundary point. This problem is closely related with elastic
\textit{qP}-wave tomography. Under the condition that the manifold can
be foliated with a continuous family of strictly convex hypersurfaces,
the local invertibility implies a global result. One can
straightforwardedly adapt the proof to show similar results for tensor
fields of arbitrary rank.
\end{abstract}
\section{Introduction}
We let $M\subset\mathbb{R}^3$ be a bounded domain with smooth
boundary $\partial M$ and $x=(x^1,x^2,x^3)$ be the Cartesian coordinates. The system of equations describing elastic waves reads
\begin{equation}\label{EQ no1}
\rho\partial^2_tu=\operatorname{div} (\mathbf{C}\varepsilon(u)).\\
\end{equation}
Here, $u$ denotes the displacement vector and
\[\varepsilon(u)=(\nabla u+(\nabla u)^T)/2=(\varepsilon_{ij}(u))=\frac{1}{2}\left(\frac{\partial u_i}{\partial x^j}+\frac{\partial u_j}{\partial x^i}\right)\]
the linear strain tensor which is the symmetric part of $\nabla u$. Furthermore, $\mathbf{C}=(C_{ijkl})=(C_{ijkl}(x))$ is the stiffness
tensor and $\rho=\rho(x)$ is the density of mass. 

The stiffness tensor is assumed to have the symmetries
\[C_{ijkl}=C_{jikl}=C_{klij}.\]
 The operator $\operatorname{div} (\mathbf{C}\varepsilon(\cdot))$ is elliptic if we additionally assume that there exists a $\delta>0$ such that for any 
$3\times 3$ real-valued symmetric matrix $(\varepsilon_{ij})$,
\[
   \sum_{i,j,k,l=1}^3C_{ijkl}\varepsilon_{ij}\varepsilon_{kl}\geq\delta\sum_{i,j=1}^3\varepsilon_{ij}^2.
\]

If the stiffness tensor $\mathbf{C}$ is isotropic, we have
\begin{equation}\label{symmetry}
C_{ijkl}=\lambda\delta_{ij}\delta_{kl}+\mu(\delta_{ik}\delta_{jl}+\delta_{il}\delta_{jk}),
\end{equation}
where $\lambda,\mu$ are  called the Lam\'{e} parameters. For isotropic elasticity there are two different wave-speeds, namely, \textit{P}-wave (longitudinal wave) speed $c_P=\sqrt{\frac{\lambda+2\mu}{\rho}}$ and S-wave  (transverse wave) speed $c_S=\sqrt{\frac{\mu}{\rho}}$. Then we can consider $M$ as a manifold with metric $c_P^{-2}\mathrm{d}s^2$ or $c_S^{-2}\mathrm{d}s^2$. Correspondingly, we can view \textit{P} waves traveling along geodesics in Riemannian manifold $(M,c_P^{-2}\mathrm{d}s^2)$, and \textit{S} waves traveling along geodesics in $(M,c_S^{-2}\mathrm{d}s^2)$.

If there is an anisotropic perturbation $a_{ijkl}$ around isotropy, that is,
\[
C_{ijkl}=\lambda\delta_{ij}\delta_{kl}+\mu(\delta_{ik}\delta_{jl}+\delta_{il}\delta_{jk})+a_{ijkl},
\]
the perturbation in travel time of \textit{P}-waves along a geodesic $\gamma$ gives the following quantity \cite{CJ}:
\begin{equation}\label{btensor}
\int_{\gamma}\frac{a_{ijkl}}{\rho c_P^6}\dot{\gamma}^i\dot{\gamma}^j\dot{\gamma}^k\dot{\gamma}^l\mathrm{d}t.
\end{equation}
Here $\gamma$ is a geodesic in $(M,c_P^{-2}\mathrm{d}s^2)$. The same
quantity has been derived by a different perturbation analysis
\cite{Shara}. Equation (\ref{btensor}) represents a geodesic ray
transform of a 4-tensor $b_{ijkl}=\frac{a_{ijkl}}{\rho c_P^6}$ in
$(M,c_P^{-2}\mathrm{d}s^2)$.

Let $(M,g)$ be a compact Riemannian manifold with boundary $\partial
M$. The geodesic ray transform of a symmetric tensor field $f$ of
order $m$ is given by
\begin{equation}\label{Im}
I_mf(\gamma)=\int_{\gamma}\langle f(\gamma(t)),\dot{\gamma}^m(t)\rangle\mathrm{d}t,
\end{equation}
where, in local coordinates, $\langle
f,v^m(t)\rangle=f_{i_1,\cdots,i_m}v^{i_1}\cdots v^{i_m}$, and $\gamma$
runs over all geodesics with endpoints on $\partial M$. We note, here,
that the tensor $b$ in $(\ref{btensor})$ is not fully symmetric. Thus,
we introduce $f$ that is the symmetrization of $b$, and study the
geodesic X-ray transform $I_4 f$. A general tensor with symmetry
(\ref{symmetry}) has 21 unknowns, while a symmetric 4-tensor has 15
unknowns. Therefore we have already lost 6 components of $\mathbf{C}$
in the formulation of the problem.

It is known that potential vector fields, i.e., $f=\mathrm{d}^sv$ with
$v$ a symmetric field of order $m-1$ vanishing on $\partial M$ ($m\geq
1$), are in the kernel of $I_m$. Here, $\mathrm{d}^s$ is the symmetric
part of the covariant derivative $\nabla$, which will be defined in
(\ref{ds}). We say that $I_m$ is s-injective if $I_mf=0$ implies
$f=\mathrm{d}^sv$ with $v\vert_{\partial M}=0$. The s-injectivity of
$I_m$ has been extensively investigated, and we refer to
\cite{IM,SUVZ} for detailed reviews.

Assuming that $M$ is simple, when $\partial M$ is strictly
convex and any two points in $M$ are connected by a unique minimizing
geodesic smoothly depending on the endpoints, it has been proved that
$I_0$ is injective \cite{Muk1,Muk2}, and $I_1$ is s-injective
\cite{AR}. In dimension two, the s-injectivity of $I_m$ for arbitrary
$m$ is proved in \cite{PSU}. In dimension three or higher, the
s-injectivity of $I_m, m\geq 2$ is still open. When $(M,g)$ has
negative sectional curvature \cite{PS}, or under certain other
curvature conditions \cite{Dair, Pestov, Shara}, the s-injectivity has
been established. Without any curvature condition, it has been proved
that the problem is Fredholm \cite{SU} (modulo potential fields) with
a finite-dimensional smooth kernel. For analytic simple metrics, the
uniqueness is proved using microlocal analytic continuation. With the
Fredholm property, the uniqueness can be extended to an open and dense
set of simple metrics in $C^k,k\gg 1$, containing analytic simple
metrics.

In \cite{UV}, Uhlmann and Vasy proved that, if $\partial M$ is
strictly convex at $p\in\partial M$ in dimension three or higher,
$I_0f(\gamma)$, for all geodesics localized in some suitable $\Omega$
near $p$, determine $f$ near $p$. Furthermore, under some global
convex ``foliation condition", it gives a global result via layer
stripping techniques. Then, Stefanov, Uhlmann and Vasy gave
corresponding results for $I_1$ and $I_2$ \cite{SUV2}. The key point
is to show the ellipticity (under a suitable gauge condition) of a
different version of the normal operator $I_m^*I_m$ as a scattering
pseudodifferential operator. The calculation for $I_1$, $I_2$, which
is already massive, is not observed to have an easy extension to
$I_m$, $m \geq 3$. In this paper, we will prove parallel results for
$I_4$ for two main reasons: (1) it arises naturally from elastic
\textit{qP}-wave tomography; (2) the scheme of calculation needs to be general
enough so that one can easily adapt the procedure to prove similar
results for $I_m$ with arbitrary $m$.

For an open set $O\subset M$, $O\cap\partial M\neq \emptyset$, we call
$\gamma$ an $O$-local geodesic if $\gamma$ is a geodesic contained in
$O$ with endpoints in $\partial M$. We denote the set of $O$-local
geodesics by $\mathcal{M}_O$. Note that $\mathcal{M}_O$ is an open
subset of the set of all geodesics $\mathcal{M}$. The introduction of
$\mathcal{M}$ and $\mathcal{M}_O$ can be found in \cite{UV}.  We
define the local geodesic ray transform of $f$ as the collection $(I_m
f)(\gamma)$ along all geodesics $\gamma\in\mathcal{M}_O$, that is, as
the restriction of the geodesic ray transform to $\mathcal{M}_O$. We
will restrict ourselves to the problem (\ref{Im}) with $m=4$ from now
on.

First, we consider $M$ as a strictly convex domain in a Riemannian
manifold $(\tilde{M},g)$ (without boundary), with boundary defining
function $\rho$, such that $\rho\geq 0$ on $M$.  As in \cite{UV,
  SUV2}, we first study the invertibility of $I_4$ in a neighborhood
of a point $p\in\partial M$ of the form $\{\tilde{x}>-c\}$,
$c>0$. Here $\tilde{x}$ is a function with $\tilde{x}(p)=0$,
$\mathrm{d}\tilde{x}(p)=-\mathrm{d}\rho(p)$. We denote
$\Omega=\Omega_c=\{x\geq 0,\rho\geq 0\}$, $x=x_c=\tilde{x}+c$. Using
the local geodesic ray transform with $\Omega$-local geodesics, we
have the local injectivity result

\begin{theorem}
With $\Omega=\Omega_c$ as above, there is $c_0>0$ such that for $c\in
(0,c_0)$, if $f\in L^2(\Omega)$ is a symmetric $4$-tensor. then
$f=u+\mathrm{d}^sv$, where $v\in
\dot{H}_{loc}^1(\Omega\setminus\{x=0\})$, while $u\in
L^2_{loc}(\Omega\setminus\{x=0\})$ can be stably determined from
$I_4f$ restricted to $\Omega$-local geodesics in the following
sense. There is a continuous map $I_4f\mapsto u$, where for $s\geq 0$,
$f\in H^s(\Omega)$, the $H^{s-1}$ norm of $u$ restricted to any
compact subset of $\Omega\setminus\{x=0\}$ is controlled by the $H^s$
norm of $I_4f$ restricted to the set of $\Omega$-local geodesics.

Replacing $\Omega_c=\{\tilde{x}>-c\}\cap M$ by
$\Omega_{\tau,c}=\{\tau>\tilde{x}>-c+\tau\}\cap M$, $c$ can be taken
uniform in $\tau$ for $\tau$ in a compact set on which the strict
concavity assumption on level sets of $\tilde{x}$ holds.
\end{theorem}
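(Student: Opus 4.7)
The plan is to adapt the microlocal framework of \cite{UV,SUV2} to symmetric tensor fields of rank four. I would regard $\Omega=\Omega_c$ as a manifold with artificial boundary $\{x=0\}$, and, given a smooth cutoff $\chi$ selecting short, near-tangential geodesics based at the artificial boundary, form the conjugated normal operator
\[
 A_F = e^{-F/x}\, I_4^{*}\, \chi\, I_4\, e^{F/x},
\]
where $F>0$ is tuned so that the Gaussian factor produced by completing the square in the phase places $A_F$ in Melrose's scattering pseudodifferential calculus $\Psi_{\mathrm{sc}}^{-1,0}(\Omega)$, acting on sections of $S^{4}T^{*}\Omega$. The first concrete task is to compute the scattering principal symbol of $A_F$ at $\{x=0\}$, which, as in \cite{UV,SUV2}, emerges as an explicit matrix-valued oscillatory integral in the scattering covector $\xi$.

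Because potential tensors $\mathrm{d}^{s}v$ lie in the kernel of $I_4$, the operator $A_F$ is not elliptic on the full bundle and a gauge must be fixed to restore ellipticity. I would introduce, at the symbol level, the solenoidal decomposition associated with $\xi$: at each boundary point the fiber of $S^{4}T^{*}$ splits into the range of symmetric product with $\xi$ (the ``potential'' subspace) and its orthogonal complement $S^{4}_{\xi}$, and I would pre- and post-compose the symbol with the projection $\pi^{s}_{\xi}$ onto the solenoidal part, producing a modified operator whose symbol is to be shown positive definite on $S^{4}_{\xi}$. At the operator level this is realised, as in \cite{SUV2}, by coupling $A_F$ with an auxiliary elliptic operator on the potential part, so that the combined system is elliptic in the scattering calculus.

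The main obstacle is verifying this pointwise positivity. On $f\in S^{4}_{\xi}$ the symbol reduces to a positive integral of $|\langle f, v^{\otimes 4}\rangle|^{2}$ against a strictly positive weight over an open cone of directions $v$ determined by $\chi$. Polarization identities, together with the fact that a homogeneous polynomial vanishing on an open cone must vanish identically, force any such $f$ to vanish in the full space of symmetric $4$-tensors, hence in $S^{4}_{\xi}$; a compactness argument on the unit sphere in $(\xi,f)$ then upgrades this to uniform positivity and thus ellipticity. The bookkeeping is considerably heavier than for rank one and two, and the key is to organise the computation using only the intrinsic tensor operations $\mathrm{d}^{s}$, $(\mathrm{d}^{s})^{*}$, and symmetric contraction with $\xi$, so that the same argument extends verbatim to arbitrary rank $m$.

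Once ellipticity in $\Psi_{\mathrm{sc}}$ is in place, the standard parametrix construction yields Fredholm property on scattering Sobolev spaces $H^{s,r}_{\mathrm{sc}}(\Omega)$. Choosing $c>0$ sufficiently small provides the semiclassical small parameter that kills the cokernel, as in \cite{UV}, giving honest invertibility of the gauge-modified operator on the appropriate weighted space. The solenoidal part $u$ of $f$ is then recovered from $I_4 f$ by applying the inverse to $I_4^{*}\chi(I_4 f)$ followed by projection to the solenoidal gauge; the mapping properties of $I_4^{*}$ and of the parametrix deliver the stated bound, namely that an $H^{s}$ norm of $I_4 f$ controls an $H^{s-1}$ norm of $u$ on any compact subset of $\Omega\setminus\{x=0\}$. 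Uniformity of $c$ in $\tau$ is automatic since every constant in the construction depends continuously on the strict concavity constants of the level sets of $\tilde{x}$, which are uniformly bounded on any compact $\tau$-interval on which the concavity hypothesis holds.
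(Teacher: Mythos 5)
Your overall architecture matches the paper's: conjugate a localized normal operator into Melrose's scattering calculus, restore ellipticity with a gauge term of the form $\mathrm{d}^s_\mathsf{F}M\delta^s_\mathsf{F}$, and invert on a sufficiently thin collar. But the mechanism you give for the crucial ellipticity step does not work. At a fixed scattering covector $\zeta=(\xi,\eta)$ the principal symbol of the normal operator does \emph{not} integrate $|\langle f,v^{\otimes 4}\rangle|^{2}$ over an open cone of directions: the Fourier transform of the conormal kernel localizes the integral to (a portion of) the codimension-one set of directions annihilated by $\zeta$ --- the equatorial sphere $\tilde{S}\xi+\hat{Y}\cdot\eta=0$ at fiber infinity, and its Gaussian-deformed analogue at finite points. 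Vanishing of a homogeneous quartic on that set only forces $f$ into the potential subspace $\zeta\otimes_s\mathrm{Sym}^3$; it is precisely the gauge condition $\sigma(\delta^s_\mathsf{F})f=0$ that must then be combined with this, component by component, to force $f=0$. (If the directions really formed an open cone, the symbol would be elliptic with no gauge at all, contradicting the fact that the potential directions lie in its kernel --- which you yourself acknowledge.) Moreover, in the scattering setting the symbol of $\delta^s_\mathsf{F}$ at finite points contains $\xi-\ii\mathsf{F}$ and the curvature terms $a^\flat,b^\flat,c^\flat$ of the level sets of $\tilde{x}$, and the joint vanishing argument only closes after the semiclassical rescaling $\xi_\mathsf{F}=\mathsf{F}^{-1}\xi$, $\eta_\mathsf{F}=\mathsf{F}^{-1}\eta$ with $\mathsf{F}$ large; ellipticity at finite points of ${}^{sc}T^*X$, including the zero section over $x=0$, is not a consequence of a compactness argument on the unit cosphere, and this is exactly where the threshold $\mathsf{F}_0$ in the theorem comes from.

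Second, even granting ellipticity of $A_\mathsf{F}=N_\mathsf{F}+\mathrm{d}^s_\mathsf{F}M\delta^s_\mathsf{F}$, the theorem asserts a decomposition $f=u+\mathrm{d}^sv$ with quantitative control on $u$, and producing $v$ and the solenoidal projection at the operator level (not merely at the symbol level) requires inverting the Dirichlet solenoidal Witten Laplacian $\Delta_{\mathsf{F},s}=\delta^s_\mathsf{F}\mathrm{d}^s_\mathsf{F}$ on symmetric $3$-tensors on $\Omega$. This rests on a Weitzenb\"ock-type identity $\delta^s_\mathsf{F}\mathrm{d}^s_\mathsf{F}=\frac{1}{4}\nabla^*_\mathsf{F}\nabla_\mathsf{F}+\frac{3}{4}\mathrm{d}^s_\mathsf{F}\delta^s_\mathsf{F}+A+R$, a Poincar\'e-type estimate, and an extension lemma for symmetric $3$-tensors across $\partial\Omega_j$ built from a higher-order reflection (a Vandermonde system of matching conditions), yielding $\|u\|_{\bar{H}^{1,0}_{sc}}\leq C(\|\mathrm{d}^s_\mathsf{F}u\|_{L^2_{sc}}+\|u\|_{L^2_{sc}})$ and the mapping properties of $\mathcal{S}_{\mathsf{F},\Omega_j}$ and $\mathcal{P}_{\mathsf{F},\Omega_j}$. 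This entire layer is absent from your proposal, and it is where the spaces $\dot{H}^1_{loc}$, the one-derivative loss $H^s\to H^{s-1}$, and the behaviour at the artificial boundary $x=0$ are actually controlled.
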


The Sobolev spaces $\dot{H}_{loc}^1$ will be defined in Section
\ref{mainproof}. As in \cite{SUV2, UV}, the above theorem can be
applied to obtain the following global result. Now, assume $\tilde{x}$
is a globally defined function with level sets
$\Sigma_t=\{\tilde{x}=t\}$ strictly concave (viewed from
$\tilde{x}^{-1}(0,t)$) for $t\in(-T,0]$, with $\tilde{x}\leq 0$ on the
  manifold $M$ with boundary. Assume further that $\Sigma_0=\partial
  M$ and $M\setminus\cup_{t\in(-T,0]}\Sigma_t$ has measure $0$ or has
    an empty interior. Will say such an $M$ satisfies the foliation
    condition.

\begin{theorem}\label{mainth}
Suppose $M$ is compact. The geodesic ray transform is injective and
stable modulo potentials on the restriction of symmetric $4$-tensors
$f$ to $\tilde{x}^{-1}((-T,0])$ in the following sense. For all
  $\tau>-T$ there is $v\in\dot{H}^{1}_{loc}(\tilde{x}^{-1}((\tau,0]))$
    such that $f-\mathrm{d}^sv\in L^2_{loc}(\tilde{x}^{-1}((\tau,0]))$
      can be stably recovered from $I_4f$. Here for stability we
      assume that $s\geq 0$, $f$ is in an $H^s$-space, the norm on
      $I_4f$ is an $H^s$-norm, while the norm for $v$ is an
      $H^{s-1}$-norm.
\end{theorem}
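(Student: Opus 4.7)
The plan is to derive Theorem~\ref{mainth} from the preceding local theorem by a layer stripping argument based on the foliation $\{\Sigma_t\}_{t \in (-T,0]}$. Fix $\tau > -T$. First, I invoke the uniform version of the local theorem (its second paragraph): since the strict concavity of the level sets holds on the compact interval $[\tau,0]$, there is a single constant $c_0 > 0$ for which the local theorem applies on every slab $\Omega_{\tau',c_0} = \{\tau' - c_0 < \tilde{x} < \tau'\} \cap M$ with $\tau' \in [\tau,0]$. I then pick a finite chain $0 = \tau_0 > \tau_1 > \cdots > \tau_N$ with $\tau_N < \tau$ and $\tau_j - \tau_{j+1} < c_0/2$, so that consecutive slabs $\Omega_{\tau_j,c_0}$ and $\Omega_{\tau_{j+1},c_0}$ overlap in a strip of width at least $c_0/2$. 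Compactness of $[\tau,0]$ guarantees that $N$ can be taken finite (depending only on $\tau$ and $c_0$).

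Next, I construct $v$ inductively. At stage $j$ I aim to produce $v^{(j)} \in \dot{H}^1_{loc}(\tilde{x}^{-1}((\tau_j - c_0,\,0]))$ with $f - \rmd^s v^{(j)} \in L^2_{loc}$ on the same set, stably recovered from $I_4 f$. The base case $j=0$ is exactly the output of the local theorem applied to $\Omega_{0,c_0}$. For the inductive step, I first extend $v^{(j)}$ by zero into the new slab $\Omega_{\tau_{j+1},c_0}$; this preserves $H^1$ regularity thanks to the vanishing of $v^{(j)}$ at $\tilde{x} = \tau_j - c_0$. I then apply the local theorem on $\Omega_{\tau_{j+1},c_0}$, obtaining a correction $w_{j+1}$ vanishing on $\{\tilde{x} = \tau_{j+1}-c_0\}$ such that the residual after subtracting $\rmd^s(v^{(j)} + w_{j+1})$ lies in $L^2_{loc}$ and is stably recovered. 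Setting $v^{(j+1)} := v^{(j)} + w_{j+1}$ (with $w_{j+1}$ cut off smoothly above $\tilde{x} = \tau_j$ where needed) advances the induction, and $v := v^{(N)}$, restricted to $\tilde{x}^{-1}((\tau,0])$, satisfies the conclusion of the theorem.

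The main obstacle is the gauge matching on overlaps. The local theorem provides the existence of the potential $v^{(j)}$ but not an explicit formula for it, so one cannot literally compute $I_4(\rmd^s v^{(j)})$ along $\Omega_{\tau_{j+1},c_0}$-local geodesics in order to apply the theorem to $f - \rmd^s v^{(j)}$ as data. The way around this is to treat the statement intrinsically: on the overlap $\Omega_{\tau_j,c_0} \cap \Omega_{\tau_{j+1},c_0}$, the two stably recovered tensors coming from adjacent slabs necessarily differ by $\rmd^s$ of an element of $\dot{H}^1_{loc}$, and this freedom is exactly what is used to adjust $w_{j+1}$ so that $v^{(j+1)}$ extends $v^{(j)}$ coherently rather than clashing with it. Finally, stability follows because the composition of the $N$ continuous maps supplied by the local theorem remains continuous, giving the $H^s \to H^{s-1}$ bound on $f - \rmd^s v$ claimed in the statement, with constant depending on $\tau$ through $N$.
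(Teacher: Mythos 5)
Your overall strategy --- layer stripping over a finite chain of overlapping slabs $\Omega_{\tau_j,c_0}$ supplied by the uniform version of the local theorem --- is the intended route; the paper itself gives no details and simply defers to the analogous argument for $I_1,I_2$ in \cite{SUV2}. But two steps of your sketch do not survive scrutiny. First, the local theorem produces $v^{(j)}\in\dot H^1_{loc}(\Omega_{\tau_j,c_0}\setminus\{\tilde x=\tau_j-c_0\})$: the dot records vanishing at the \emph{outer} boundary $\partial M$, while near the \emph{artificial inner} surface $\{\tilde x=\tau_j-c_0\}$ there is no control at all --- the estimates hold only on compact subsets away from it and may degenerate as one approaches it. So ``extend $v^{(j)}$ by zero into the new slab, thanks to its vanishing at $\tilde x=\tau_j-c_0$'' invokes a vanishing the theorem does not provide, and the zero extension need not be $H^1$ across that surface. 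Relatedly, your diagnosis of the ``main obstacle'' is off: if $v^{(j)}$ \emph{were} an $H^1$ tensor on all of $M$ vanishing on $\partial M$, then $I_4(\mathrm{d}^s v^{(j)})(\gamma)=0$ for every geodesic with endpoints on $\partial M$, by integrating $\frac{\mathrm{d}}{\mathrm{d}t}\langle v^{(j)},\dot\gamma^3\rangle=\langle \mathrm{d}^s v^{(j)},\dot\gamma^4\rangle$, so there would be nothing to ``compute''; the genuine obstacle is precisely the loss of control of $v^{(j)}$ at the inner surface, which your extension step assumes away.

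Second, the gauge matching on overlaps is asserted rather than proved. That the two recovered solenoidal parts differ on the overlap by $\mathrm{d}^s$ of an $H^1_{loc}$ tensor is immediate from $f=u_1+\mathrm{d}^sv_1=u_2+\mathrm{d}^sv_2$, but turning this into a coherent global $v$ requires (i) a uniqueness statement for the potential subject to the boundary condition --- that $\mathrm{d}^s w=0$ together with $w=0$ on (part of) $\partial M$ forces $w\equiv 0$, via the transport equation along geodesics --- and (ii) a quantitative version of it so that the composition of the $N$ stages remains continuous with the claimed $H^s\to H^{s-1}$ loss. Your smooth cutoff of $w_{j+1}$ above $\tilde x=\tau_j$ also alters $\mathrm{d}^s w_{j+1}$ by a commutator term there, so the residual $f-\mathrm{d}^s v^{(j+1)}$ is no longer the tensor the local theorem controls on that region. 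These are exactly the points where the cited argument of \cite{SUV2} (the proof of their global theorem and the supporting lemmas on the solenoidal gauge) does real work, and as written your proposal leaves them as gaps.
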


The foliation condition can be satisfied even in the presence of
caustics. A Riemannian manifold $(M,c^{-2}(|x|)\mathrm{d}s^2)$
satisfying the Herglotz \cite{Her} and Wiechert and Zoeppritz
\cite{WZ} condition $\frac{\mathrm{d}}{\mathrm{d}r}\frac{r}{c(r)}>0$
satisfies the foliation condition. The Euclidean spheres $|x|=r$ form
a strictly convex foliation. With the PREM (Preliminary Reference
Earth Model) model for Earth, this condition is a realistic one. We
note here that it does not exclude the existence of conjugate
points. More discussion on the foliation condition can be found in
\cite{PSUZ} and the references therein.

\section{Pseudodifferential property}
In $\Omega$, we can use local coordinates $(x,y)$, with $x$ introduced above. We are interested in geodesics ``almost tangent" to level sets of $\tilde{x}$. 

Let $\gamma_{x,y,\lambda,\omega}$ be a geodesic in $\tilde{M}$ such that
\[\gamma_{x,y,\lambda,\omega}(0)=(x,y),~~\dot{\gamma}_{x,y,\lambda,\omega}(0)=(\lambda,\omega),\]
with
$(x,y,\lambda,\omega)\in\mathbb{R}\times\mathbb{R}^{n-1}\times\mathbb{R}\times\mathbb{S}^{n-2}$. We
need that for $x\geq 0$ and $\lambda$ sufficiently small the
geodesic $\gamma_{x,y,\lambda,\omega}(t)$ stays in $x\geq 0$ as long
as it is in $M$. Thus for $x=0$, $\lambda$ can only be $0$. This is
guaranteed if $|\lambda|<C_1\sqrt{x}$, for sufficiently small
$C_1$. For convenience, we use a smaller range $|\lambda|\leq
C_2x$. We take $\chi$ to be a smooth, even, non-negative function with
compact support (to be specified).

We denote
\begin{equation}
(I_4f)(x,y,\lambda,\omega)=\int_{\mathbb{R}}\langle f(\gamma_{x,y,\lambda,\omega}(t)),\dot{\gamma}_{x,y,\lambda,\omega}^4(t)\rangle\mathrm{d}t.
\end{equation}
We note here that we are only interested in $f$ supported in $\overline{M}$, whence the above integration is actually along the segment of $\gamma_{x,y,\lambda,\omega}$ in $M$.
On $u(x,y,\lambda,\omega)$, we define
\begin{equation}
\begin{split}
(L_4u)(x,y)=x^4\int\chi(\lambda/x)u(x,y,\lambda,\omega)&g_{sc}(\lambda\partial_x+\omega\partial_y)\otimes g_{sc}(\lambda\partial_x+\omega\partial_y)\\
&\otimes g_{sc}(\lambda\partial_x+\omega\partial_y)\otimes g_{sc}(\lambda\partial_x+\omega\partial_y)\mathrm{d}\lambda\mathrm{d}\omega.
\end{split}
\end{equation}
We will carry out the calculation on $X=\{x\geq 0\}$.
Here, $u$ is a (locally defined in the support of $\chi$) function on
the space of geodesics parametrized by $(x,y,\lambda,\omega)$,
and $g_{sc}$ maps vectors to covectors; $g_{sc}$ is the scattering
metric of the form
\begin{equation}
g_{sc}=x^{-4}\mathrm{d}x^2+x^{-2}h,
\end{equation}
where $h(x,y)$ is a standard 2-cotensor on $X$.

As in \cite{SUV2}, we will show that $L_4I_4$, conjugated by an
exponential weight, is in Melrose's scattering pseudodifferential
algebra (cf. \cite{Melrose} for an introduction). The ellipticity of
the scattering pseudodifferential operator will be the main subject of
this section. In local coordinates $(x,y^1,\cdots,y^{n-1})$, the
scattering tangent bundle ${}^{sc}TX$, has a local basis
$x\partial_x,x\partial_{y^1},\cdots,x\partial_{y^{n-1}}$, and the dual
bundle ${}^{sc}T^*X$ correspondingly has a local basis
$\frac{\mathrm{d}x}{x^2},\frac{\mathrm{d}y^1}{x},\cdots,
\frac{\mathrm{d}y^{n-1}}{x}$. We adopt the notation
$\Psi_{sc}^{m,l}(X)$ for the scattering pseudodifferential algebra
introduced in \cite{SUV2}. We also use the notation ${}^{sc}TX$,
${}^{sc}T^*X$ and $\mathrm{Sym}^{k}{}^{sc}T^*X$ defined there in the
following analogue of \cite[Proposition 3.1]{SUV2}

\begin{proposition}
On symmetric $4$-tensors, the operator $N_\mathsf{F}=e^{-\mathsf{F}/x}LI_4e^{\mathsf{F}/x}$, lies in
\[\Psi_{sc}^{-1,0}(X;\mathrm{Sym}^{4}{}^{sc}T^*X,\mathrm{Sym}^{4}{}^{sc}T^*X),\]
for $\mathsf{F}>0$.
\end{proposition}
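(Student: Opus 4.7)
The plan is to adapt the argument of \cite[Proposition 3.1]{SUV2}, which proves the analogous statement for $I_2$. The overall geometric structure of $L_m I_m$ is the same for every tensor rank $m$; only the tensorial factors inside the integrand change. Thus I expect the real content of this proposition to lie in verifying that the rank-$4$ tensorial bookkeeping is consistent with the scattering calculus and does not damage any of the conormality and decay estimates from the rank-$2$ case.

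Concretely, first write the composition explicitly as
\[
L_4 I_4 f(x,y) = x^4 \int \chi(\lambda/x) \Big( \int \langle f(\gamma_{x,y,\lambda,\omega}(t)), \dot\gamma_{x,y,\lambda,\omega}^{\otimes 4}(t)\rangle\, \rmd t \Big) g_{sc}(V)^{\otimes 4} \, \rmd\lambda\, \rmd\omega,
\]
with $V = \lambda\p_x + \omega\p_y$, and then change variables from $(\lambda,\omega,t)$ to $(x',y') = \gamma_{x,y,\lambda,\omega}(t)$. Since $|\lambda|\leq C_2 x$ enforces near-tangency to level sets of $\tilde x$, on this cone the exponential map from $(x,y)$ is a diffeomorphism with smooth positive Jacobian, whose inverse expresses $(\lambda,\omega,t)$ smoothly in terms of $(x,y,x',y')$. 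Introducing scattering normal coordinates $X=(x'-x)/x^2$ and $Y=(y'-y)/x$ around the diagonal, the Schwartz kernel of $L_4 I_4$ becomes a family, parametrized by $(x,y)$, of distributions in $(X,Y)$ that are conormal at the diagonal (arising from the $t$-integration) and, away from the singularity, have support controlled by $\chi$ together with the geometric cutoff.

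At this stage the conjugation by $e^{\pm\mathsf{F}/x}$ enters. The kernel of $N_\mathsf{F}$ differs from that of $L_4 I_4$ by a factor
\[
e^{\mathsf{F}(1/x' - 1/x)} = \exp\!\Big( -\frac{\mathsf{F} X}{1 + xX}\Big),
\]
which for $\mathsf{F}>0$ provides exponential decay in the direction $X>0$. Combined with the angular restriction coming from $\chi$, this yields the Schwartz decay of the kernel in the fiber variable that Melrose's scattering calculus requires. Taking the Fourier transform in $(X,Y)$ then produces the symbol; the order $-1$ arises from the single $t$-integration along the geodesic (exactly as in the $I_0$ and $I_2$ cases), and the scattering order $0$ reflects uniform control all the way down to $x=0$.

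The main obstacle will be the tensorial algebra. The kernel is valued in $\mathrm{Sym}^4\,{}^{sc}T^*X \otimes \mathrm{Sym}^4\,{}^{sc}T^*X$, and one must expand $g_{sc}(V)^{\otimes 4} \otimes \dot\gamma_{x,y,\lambda,\omega}(t)^{\otimes 4}$ in the scattering covector and vector frames $\{\rmd x/x^2, \rmd y^j/x\}$ and $\{x^2\p_x, x\p_{y^j}\}$, respectively, keeping careful track of how powers of $x$ and $\lambda/x$ distribute among the sixteen tensorial components. Once this is done the coefficients are smooth functions of the rescaled variables $(\lambda/x,\omega)$, and the rank-$4$ tensor product unfolds into a finite sum of scalar oscillatory integrals of precisely the type handled in the rank-$2$ case. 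All remaining verifications---symbolic reduction, polyhomogeneity, identification of the scattering orders, and independence from the choice of $\chi$---then follow by the same arguments as in \cite{SUV2}, with no new analytic ideas beyond the tensorial bookkeeping.
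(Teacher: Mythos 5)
Your proposal follows essentially the same route as the paper's (very terse) proof: pass to the scattering coordinates $X=(x'-x)/x^2$, $Y=(y'-y)/x$ via the maps $\Gamma_\pm$ of \cite{UV}, exhibit the Schwartz kernel with the conjugation factor $e^{-\mathsf{F}X/(1+xX)}$ and the rank-four tensorial factors expanded in the scattering frames, and defer the conormality and order computations to \cite[Proposition 3.1]{SUV2}. The one point worth sharpening is that the rapid decay as $|Y|\to\infty$ comes from the weight $e^{-\mathsf{F}X}$ interacting with the relation $X\approx\alpha|Y|^2$ (with $\alpha>0$ by strict concavity) forced on the support of $\chi$, not merely from the angular cutoff, but this is exactly the mechanism of the cited proposition and your argument is otherwise correct.
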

\begin{proof}
This proposition is analogous to . Use the map introduced in \cite{UV},
\[
\Gamma_+:S\tilde{M}\times[0,\infty]\rightarrow[\tilde{M}\times\tilde{M};\text{diag}],~\Gamma_+(x,y,\lambda,\omega,t)=(x,y,|y'-y|,\frac{x'-x}{|y'-y|},\frac{y'-y}{|y'-y|}),
\]
where $(x',y')=\gamma_{x,y,\lambda,\omega}(t)$. Here $[\tilde{M}\times\tilde{M};\text{diag}]$ is the \textit{blow-up} of $\tilde{M}$ at the diagonal $(x,y)=(x',y')$. Similarly, we can also define $\Gamma_-$ in which $(-\infty,0]$ takes the place of $[0,\infty)$.

We write
\[(\gamma_{x,y,\lambda,\omega}(t),\dot{\gamma}_{x,y,\lambda,\omega}(t))=(\mathsf{X}_{x,y,\lambda,\omega}(t),\mathsf{Y}_{x,y,\lambda,\omega}(t),\Lambda_{x,y,\lambda,\omega}(t),\Omega_{x,y,\lambda,\omega}(t)),\]
in coordinates $(x,y,\lambda,\omega)$ for lifted geodesic $\gamma_{x,y,\lambda,\omega}(t)$. We use the coordinates,
\[x,y,X=\frac{x'-x}{x^2},Y=\frac{y'-y}{x},\]
as in \cite{UV}, and obtain the Schwartz kernel of $N_\mathsf{F}$ on symmetric 4-tensors (with $\hat{Y}=\frac{Y}{|Y|}$):
\begin{equation}\label{kernel}
\begin{split}
K^\flat(x,y,X,Y)=\sum_{\pm}e^{-\mathsf{FX}/(1+xX)}\chi\left(\frac{X-\alpha(x,y,x|Y|,\frac{xX}{|Y|},\hat{Y})|Y|^2}{|Y|}+x\tilde{\Lambda}_{\pm}\left(x,y,x|Y|,\frac{x|X|}{|Y|},\hat{Y}\right)\right)\\
\left[x^{-1}(\Lambda\circ\Gamma_{\pm}^{-1})\frac{\mathrm{d}x}{x^2}+(\Omega\circ\Gamma_{\pm}^{-1})\frac{h(\partial_y)}{x}\right]^4\left[x^{-1}(\Lambda'\circ\Gamma_{\pm}^{-1})x^2\partial_{x'}+(\Omega'\circ\Gamma_{\pm}^{-1})x\partial_{y'}\right]^4\\
|Y|^{-n+1}J_{\pm}\left(x,y,\frac{X}{|Y|},|Y|,\hat{Y}\right).
\end{split}
\end{equation}
\end{proof}

We denote 
\[\nabla:T^mM\rightarrow T^{m+1}M\]
being the connection defined componentwise as 
\begin{equation}\label{connection}
\begin{split}
\nabla_k u_{j_1,\cdots,j_m}&=u_{j_1,\cdots,j_m;k}\\
&=\frac{\partial}{\partial x^k}u_{j_1,\cdots,j_m}-\sum_{p=1}^m\Gamma_{k,j_p}^qu_{j_1,\cdots,j_{p-1},q,j_{p+1},\cdots,j_m},
\end{split}
\end{equation}
where $\Gamma$ is the Christoffel symbol with respect to the metric $g$.
For $u\in T^mM$, we define its symmetrization as
\[\begin{split}\mathscr{S}: T^mM&\rightarrow S^mM\\
u&\mapsto f,
\end{split}\]
with
\[f(v_1,\cdots,v_m)=\frac{1}{m!}\sum_{\sigma}u(v_{\sigma(1)},\cdots,v_{\sigma(m)}),\]
where $\sigma$ runs over all permutation group of $(1,\cdots,m)$, and $v_j\in C^\infty(TM)$, $j=1,\cdots,m$.

We define the symmetric differential $\mathrm{d}^s\in S^mM\rightarrow S^{m+1}M$ to be
\begin{equation}\label{ds}
\mathrm{d}^s=\mathscr{S}\nabla.
\end{equation}
%Then, if $f$ is already a symmetric m-cotensor, then
%\[(\mathrm{d}^sf)_{j_1,\cdots,j_m,k}=\frac{1}{2}(u_{j_1,\cdots,j_m;k}+u_{j_1,\cdots,j_{m-1},k;j_{m}})\]
and note that $\mathrm{d}^s$ is different from the exterior
differential $\mathrm{d}$ defined on the bundle of $k$-forms
$\Lambda^kM$.  We also define
$\mathrm{d}_\mathsf{F}^s=e^{-\mathsf{F}/x}\mathrm{d}^se^{\mathsf{F}/x}$ and denote
its adjoint with respect to the scattering
metric $g_{sc}$ (not $g$) as $\delta_\mathsf{F}^s$.

For convenience of calculation, we will use the basis 
\[
\begin{split}
&\frac{\mathrm{d}x}{x^2}\otimes \frac{\mathrm{d}x}{x^2}\otimes \frac{\mathrm{d}x}{x^2},\\
&\frac{\mathrm{d}x}{x^2}\otimes \frac{\mathrm{d}x}{x^2}\otimes \frac{\mathrm{d}y}{x},~\frac{\mathrm{d}x}{x^2}\otimes \frac{\mathrm{d}y}{x}\otimes \frac{\mathrm{d}x}{x^2},~\frac{\mathrm{d}y}{x}\otimes \frac{\mathrm{d}x}{x^2}\otimes \frac{\mathrm{d}x}{x^2},\\
&\frac{\mathrm{d}x}{x^2}\otimes \frac{\mathrm{d}y}{x}\otimes \frac{\mathrm{d}y}{x},~~\frac{\mathrm{d}y}{x}\otimes \frac{\mathrm{d}x}{x^2}\otimes \frac{\mathrm{d}y}{x},~\frac{\mathrm{d}y}{x}\otimes \frac{\mathrm{d}y}{x}\otimes \frac{\mathrm{d}x}{x^2},\\
&\frac{\mathrm{d}y}{x}\otimes \frac{\mathrm{d}y}{x}\otimes \frac{\mathrm{d}y}{x}
\end{split}
\]
for 3-tensors, and the basis
\[
\begin{split}
&\frac{\mathrm{d}x}{x^2}\otimes \frac{\mathrm{d}x}{x^2}\otimes \frac{\mathrm{d}x}{x^2}\otimes \frac{\mathrm{d}x}{x^2},\\
&\frac{\mathrm{d}x}{x^2}\otimes \frac{\mathrm{d}x}{x^2}\otimes \frac{\mathrm{d}x}{x^2}\otimes \frac{\mathrm{d}y}{x},~~\frac{\mathrm{d}x}{x^2}\otimes \frac{\mathrm{d}x}{x^2}\otimes \frac{\mathrm{d}y}{x}\otimes \frac{\mathrm{d}x}{x^2},~~\frac{\mathrm{d}x}{x^2}\otimes \frac{\mathrm{d}y}{x}\otimes \frac{\mathrm{d}x}{x^2}\otimes \frac{\mathrm{d}x}{x^2},~~\frac{\mathrm{d}y}{x}\otimes \frac{\mathrm{d}x}{x^2}\otimes \frac{\mathrm{d}x}{x^2}\otimes \frac{\mathrm{d}x}{x^2},\\
&\frac{\mathrm{d}x}{x^2}\otimes \frac{\mathrm{d}x}{x^2}\otimes \frac{\mathrm{d}y}{x}\otimes \frac{\mathrm{d}y}{x},~~\frac{\mathrm{d}x}{x^2}\otimes \frac{\mathrm{d}y}{x}\otimes\frac{\mathrm{d}x}{x^2}\otimes \frac{\mathrm{d}y}{x},~~\frac{\mathrm{d}x}{x^2}\otimes \frac{\mathrm{d}y}{x}\otimes \frac{\mathrm{d}y}{x}\otimes \frac{\mathrm{d}x}{x^2},\\
&~~~~~~~~~~~~~~~~~~~~~~~~~\frac{\mathrm{d}y}{x}\otimes \frac{\mathrm{d}x}{x^2}\otimes \frac{\mathrm{d}x}{x^2}\otimes \frac{\mathrm{d}y}{x},~~\frac{\mathrm{d}y}{x}\otimes \frac{\mathrm{d}x}{x^2}\otimes \frac{\mathrm{d}y}{x}\otimes \frac{\mathrm{d}x}{x^2},~~\frac{\mathrm{d}y}{x}\otimes \frac{\mathrm{d}y}{x}\otimes \frac{\mathrm{d}x}{x^2}\otimes \frac{\mathrm{d}x}{x^2},\\
&\frac{\mathrm{d}y}{x}\otimes \frac{\mathrm{d}y}{x}\otimes \frac{\mathrm{d}x}{x^2}\otimes \frac{\mathrm{d}y}{x},~~\frac{\mathrm{d}y}{x}\otimes \frac{\mathrm{d}y}{x}\otimes \frac{\mathrm{d}y}{x}\otimes \frac{\mathrm{d}x}{x^2},~~\frac{\mathrm{d}y}{x}\otimes \frac{\mathrm{d}x}{x^2}\otimes \frac{\mathrm{d}y}{x}\otimes \frac{\mathrm{d}y}{x},~~\frac{\mathrm{d}x}{x^2}\otimes \frac{\mathrm{d}y}{x}\otimes \frac{\mathrm{d}y}{x}\otimes \frac{\mathrm{d}y}{x},\\
&\frac{\mathrm{d}y}{x}\otimes \frac{\mathrm{d}y}{x}\otimes \frac{\mathrm{d}y}{x}\otimes \frac{\mathrm{d}y}{x},
\end{split}
\]
for 4-tensors. For symmetric 3-tensors, we use the basis
\[\frac{\mathrm{d}x}{x^2}\otimes_s \frac{\mathrm{d}x}{x^2}\otimes_s \frac{\mathrm{d}x}{x^2},~~2\times\frac{\mathrm{d}x}{x^2}\otimes_s \frac{\mathrm{d}x}{x^2}\otimes_s \frac{\mathrm{d}y}{x},~~2\times\frac{\mathrm{d}x}{x^2}\otimes_s \frac{\mathrm{d}y}{x}\otimes_s \frac{\mathrm{d}y}{x},~~\frac{\mathrm{d}y}{x}\otimes_s \frac{\mathrm{d}y}{x}\otimes_s \frac{\mathrm{d}y}{x};\]
for symmetric 4-tensors, we use the basis
\[\begin{split}\frac{\mathrm{d}x}{x^2}\otimes_s\frac{\mathrm{d}x}{x^2}\otimes_s \frac{\mathrm{d}x}{x^2}\otimes_s \frac{\mathrm{d}x}{x^2},~~4\times\frac{\mathrm{d}x}{x^2}\otimes_s\frac{\mathrm{d}x}{x^2}\otimes_s \frac{\mathrm{d}x}{x^2}\otimes_s \frac{\mathrm{d}y}{x},~~6\times\frac{\mathrm{d}x}{x^2}\otimes_s\frac{\mathrm{d}x}{x^2}\otimes_s \frac{\mathrm{d}y}{x}\otimes_s \frac{\mathrm{d}y}{x},\\
~~4\times\frac{\mathrm{d}x}{x^2}\otimes_s\frac{\mathrm{d}y}{x}\otimes_s \frac{\mathrm{d}y}{x}\otimes_s \frac{\mathrm{d}y}{x},~~\frac{\mathrm{d}y}{x}\otimes_s\frac{\mathrm{d}y}{x}\otimes_s \frac{\mathrm{d}y}{x}\otimes_s \frac{\mathrm{d}y}{x}.
\end{split}\]
In the above, $\otimes_s$ denotes the symmetric product, for example, $a\otimes_s b=\mathscr{S}(a\otimes b)$.

\begin{lemma}\label{dd}
On symmetric $4$-tensors, $\mathrm{d}^s_\mathsf{F}\delta^s_\mathsf{F}\in\mathrm{Diff}_{sc}^{2,0}(X;\mathrm{Sym}^{4}{}^{sc}T^*X,\mathrm{Sym}^{4}{}^{sc}T^*X)$ has principal symbol
\[
\begin{split}
\mathfrak{D}(x,y,\xi,\eta)&=\begin{footnotesize}\left(
\begin{array}{cccc}
\xi+\ii \mathsf{F} & 0 & 0 & 0\\
\frac{1}{4}\eta\otimes & \frac{3}{4}(\xi+\ii \mathsf{F}) & 0 & 0\\
a^\flat & \frac{1}{2}\eta\otimes_s & \frac{1}{2}(\xi+\ii \mathsf{F})& 0\\
0 & b^\flat & \frac{3}{4}\eta\otimes_s &\frac{1}{4}(\xi+\ii \mathsf{F})\\
0 & 0& c^\flat & \eta\otimes_s
\end{array}
\right)\left(\begin{array}{ccccc}
\xi-\ii \mathsf{F} & \iota_\eta &6\langle a^\flat,\cdot\rangle &0 &0\\
0 & (\xi-\ii\mathsf{F}) &\iota_\eta^s &\frac{4}{3}\langle b^\flat,\cdot\rangle &0\\
0 & 0 &(\xi-\ii\mathsf{F}) &\iota_\eta^s&\frac{1}{3}\langle c^\flat,\cdot\rangle\\
0 & 0 & 0 &(\xi-\ii\mathsf{F})&\iota_\eta^s
\end{array}
\right)\end{footnotesize}\\
&=\begin{footnotesize}\left(\begin{array}{ccccc}
|\xi|^2+ \mathsf{F}^2 & (\xi+\ii \mathsf{F})\iota_\eta &6(\xi+\ii \mathsf{F})\langle a^\flat,\cdot\rangle & 0 & 0\\
\frac{1}{4}(\xi-\ii\mathsf{F})\eta\otimes & \frac{1}{4}(\eta\otimes)\iota_\eta+\frac{1}{4}(|\xi|^2+\mathsf{F}^2)& \mathfrak{D}_{23} &\mathfrak{D}_{24} & 0\\
(\xi-\ii\mathsf{F})a^\flat & a^\flat\iota_\eta+\frac{1}{2}(\xi-\ii\mathsf{F})\eta\otimes_s &\mathfrak{D}_{33} & \mathfrak{D}_{34}&\mathfrak{D}_{35}\\
0 & (\xi-\ii\mathsf{F})b^\flat&\mathfrak{D}_{43} & \mathfrak{D}_{44}&\mathfrak{D}_{45}\\
0& 0&\mathfrak{D}_{53} & \mathfrak{D}_{54}&\mathfrak{D}_{55}
\end{array}
\right)\end{footnotesize}
\end{split}\]
with
\begin{equation*}
\begin{split}
&\mathfrak{D}_{23}=\frac{3}{2}\eta\otimes\langle a^\flat,\cdot\rangle+\frac{3}{4}(\xi+\ii\mathsf{F})\iota_\eta^s,\\
&\mathfrak{D}_{24}=(\xi+\ii\mathsf{F})\otimes\langle b^\flat,\cdot\rangle,\\
&\mathfrak{D}_{33}=6a^\flat\langle a^\flat,\cdot\rangle+\frac{1}{2}(\eta\otimes)\iota_\eta+\frac{1}{2}(|\xi|^2+\mathsf{F}^2),\\
&\mathfrak{D}_{34}=\frac{2}{3}\eta\otimes\langle b^\flat,\cdot\rangle+\frac{1}{2}(\xi+\ii\mathsf{F})\otimes\iota_\eta^s,\\
&\mathfrak{D}_{35}=\frac{1}{6}(\xi+\ii\mathsf{F})\otimes\langle c^\flat,\cdot\rangle,\\
&\mathfrak{D}_{43}=b^\flat\iota_\eta^s+\frac{3}{4}(\xi-\ii\mathsf{F})\eta\otimes_s,\\
&\mathfrak{D}_{44}=\frac{4}{3}b^\flat\langle b^\flat,\cdot\rangle+\frac{3}{4}(\eta\otimes)\iota_\eta^s+\frac{1}{4}(|\xi|^2+\mathsf{F^2}),\\
&\mathfrak{D}_{45}=\frac{1}{4}\eta\otimes\langle c^\flat,\cdot\rangle+\frac{1}{4}(\xi+\ii\mathsf{F})\otimes\iota_\eta^s,\\
&\mathfrak{D}_{53}=(\xi-\ii\mathsf{F})c^\flat,\\
&\mathfrak{D}_{54}=c^\flat\iota_\eta^s+(\xi-\ii\mathsf{F})\eta\otimes_s,\\
&\mathfrak{D}_{55}=\frac{1}{3}c^\flat\langle c^\flat,\cdot\rangle+\eta\otimes_s\iota_{\eta}.
\end{split}
\end{equation*}
\end{lemma}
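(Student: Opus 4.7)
The plan is to derive the stated factorization $\mathfrak{D} = \sigma(\mathrm{d}^s_\mathsf{F}) \cdot \sigma(\delta^s_\mathsf{F})$ by computing the scattering principal symbols of the two factors separately and then multiplying them. Since $\delta^s_\mathsf{F}$ is the $g_{sc}$-adjoint of $\mathrm{d}^s_\mathsf{F}$, the main task is to compute the principal symbol of $\mathrm{d}^s_\mathsf{F}$ as a scattering differential operator of order $(1,0)$ from $\mathrm{Sym}^3\,{}^{sc}T^*X$ to $\mathrm{Sym}^4\,{}^{sc}T^*X$, organized as a $5\times 4$ matrix in the bases displayed just before the lemma, and then to take its entrywise $g_{sc}$-adjoint.

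First, I would expand $\mathrm{d}^s = \mathscr{S}\nabla$ acting on a symmetric $3$-tensor written in the scattering basis $\{\frac{\mathrm{d}x}{x^2}, \frac{\mathrm{d}y}{x}\}$, using the coordinate formula (\ref{connection}) together with the definition (\ref{ds}). Two types of contributions arise. The derivative piece acts through $x^2\partial_x$ and $x\partial_{y^j}$, whose scattering principal symbols are $\xi$ and $\eta$ respectively; the exponential conjugation sends $x^2\partial_x$ to $x^2\partial_x - \mathsf{F}$, producing the factor $\xi + \ii\mathsf{F}$ (and $\xi - \ii\mathsf{F}$ after adjoining). The symmetrization $\mathscr{S}$ combined with the multiplicities in the tensor bases of the statement generates the rational prefactors $\tfrac{1}{4}, \tfrac{1}{2}, \tfrac{3}{4}, 1$ in the diagonal block containing $(\xi+\ii\mathsf{F})$ and in the sub-diagonal block containing $\eta\otimes_s$. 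The remaining piece comes from the Christoffel symbols of $g$ interacting with the singular factors $x^{-2}, x^{-1}$ built into the scattering basis; because these Christoffel symbols are of order $1/x$ in this metric, they survive in the boundary scattering principal symbol and produce the off-diagonal bundle maps $a^\flat, b^\flat, c^\flat$, which shift the number of $\frac{\mathrm{d}y}{x}$ factors by a full pair.

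Second, I would compute the $g_{sc}$-adjoint of each entry. Since the scattering coframe $\frac{\mathrm{d}x}{x^2}, \frac{\mathrm{d}y}{x}$ is orthonormal with respect to $g_{sc}^{-1}$, and the chosen symmetric bases are mutually orthogonal (blocks indexed by the number of $y$-factors), the adjoint is computed entrywise: $(\xi + \ii\mathsf{F})$ dualizes to $(\xi - \ii\mathsf{F})$; the maps $\eta\otimes$ and $\eta\otimes_s$ dualize to the contractions $\iota_\eta$ and $\iota_\eta^s$ with the matching combinatorial weights; and the bundle maps $a^\flat, b^\flat, c^\flat$ dualize to the pairings $\langle a^\flat,\cdot\rangle, \langle b^\flat,\cdot\rangle, \langle c^\flat,\cdot\rangle$, where the coefficients $6, \tfrac{4}{3}, \tfrac{1}{3}$ record the mismatch between the $\otimes_s$-normalizations of the $3$-tensor and $4$-tensor bases.

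Finally, I would multiply the two matrices. The diagonal pieces $(\xi+\ii\mathsf{F})(\xi-\ii\mathsf{F}) = |\xi|^2 + \mathsf{F}^2$, together with the analogous diagonal contributions $(\eta\otimes)\iota_\eta$, produce the $|\xi|^2 + \mathsf{F}^2$ terms in each diagonal block of $\mathfrak{D}$, while the remaining entries cross-combine into the displayed formulas for $\mathfrak{D}_{23},\dots,\mathfrak{D}_{55}$. The main obstacle is the first step: correctly tracking how the Christoffel-symbol contributions enter the scattering principal symbol through $\mathscr{S}\nabla$ acting on the singular basis elements, so as to identify $a^\flat, b^\flat, c^\flat$ with their correct combinatorial normalizations. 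Once this is done, the adjoint step is a direct entrywise dualization and the final matrix multiplication is a mechanical (though tedious) block calculation.
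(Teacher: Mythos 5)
Your proposal follows essentially the same route as the paper: compute the $5\times4$ principal symbol of $\mathrm{d}^s_\mathsf{F}=e^{-\mathsf{F}/x}\mathscr{S}\nabla e^{\mathsf{F}/x}$ in the scattering bases (with the rational prefactors coming from the symmetrization/basis multiplicities and $a^\flat,b^\flat,c^\flat$ from the Christoffel terms interacting with the singular basis weights), dualize entrywise with respect to the weighted inner products on symmetric $3$- and $4$-tensors (the paper's $M(3)$, $M(4)$, which produce exactly your coefficients $6,\tfrac{4}{3},\tfrac{1}{3}$), and multiply the resulting matrices. This matches the paper's proof, so no further comparison is needed.
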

The quantities $a^\flat,b^\flat,c^\flat$ are defined in the proof
\begin{proof}
we denote
\[
\begin{split}
f=&f_{xxx}\frac{\mathrm{d}x}{x^2}\otimes_s \frac{\mathrm{d}x}{x^2}\otimes_s \frac{\mathrm{d}x}{x^2}+3\times f_{xxy^i}\frac{\mathrm{d}x}{x^2}\otimes_s \frac{\mathrm{d}x}{x^2}\otimes_s \frac{\mathrm{d}y^i}{x}\\
&+3\times f_{xy^iy^j}\frac{\mathrm{d}x}{x^2}\otimes_s \frac{\mathrm{d}y^i}{x}\otimes_s \frac{\mathrm{d}y^j}{x}+f_{y^iy^jy^k}\frac{\mathrm{d}y^i}{x}\otimes_s \frac{\mathrm{d}y^j}{x}\otimes_s \frac{\mathrm{d}y^k}{x}.
\end{split}
\]
By calculation
\begin{equation*}
\begin{split}
(\nabla f)_{xxxx}&=x^{-6}\partial_xf_{xxx}+O(x^{-7}),\\
(\nabla f)_{xxxy^i}&=x^{-6}\partial_{y^i}f_{xxx}+O(x^{-6}),\\
(\nabla f)_{xxy^ix}&=x^{-5}\partial_xf_{xxy^i}+O(x^{-6}),\\
(\nabla f)_{xxy^iy^j}&=x^{-5}\partial_{y^j}f_{xxy^i}+x^{-6}a_1(f_{xxx})+O(x^{-5}),\\
(\nabla f)_{xy^iy^jx}&=x^{-4}\partial_xf_{xy^iy^j}+O(x^{-5}),\\
(\nabla f)_{xy^iy^jy^k}&=x^{-4}\partial_{y^k}f_{xy^iy^j}+x^{-5}b_1(f_{xxy})+O(x^{-4}),\\
(\nabla f)_{y^iy^jy^kx}&=x^{-3}\partial_xf_{y^iy^jy^k}+O(x^{-4}),\\
(\nabla f)_{y^iy^jy^ky^l}&=x^{-3}\partial_{y^l}f_{y^iy^jy^k}+x^{-4}c_1(f_{xyy})+O(x^{-3}).
\end{split}
\end{equation*}
Here $a_1,b_1,c_1$ come from the contributions of Christoffel symbol $\Gamma$ in equation (\ref{connection}). Then, we derive
\begin{equation}
\begin{split}
\mathrm{d}^sf=&x^2\partial_xf_{xxx}\frac{\mathrm{d}x}{x^2}\otimes_s\frac{\mathrm{d}x}{x^2}\otimes_s \frac{\mathrm{d}x}{x^2}\otimes_s \frac{\mathrm{d}x}{x^2}\\
&+4\times\left(\frac{1}{4}x\partial_{y^i}f_{xxx}+\frac{3}{4}x^2\partial_xf_{xxy^i}\right)\frac{\mathrm{d}x}{x^2}\otimes_s\frac{\mathrm{d}x}{x^2}\otimes_s \frac{\mathrm{d}x}{x^2}\otimes_s \frac{\mathrm{d}y^i}{x}\\
&+6\times\left(\frac{1}{2}\mathrm{Sym}_y(x\partial_{y^j}f_{xxy^i})+\frac{1}{2}x^2\partial_xf_{xy^iy^j}+a^\flat(f_{xxx})\right)\frac{\mathrm{d}x}{x^2}\otimes_s\frac{\mathrm{d}x}{x^2}\otimes_s \frac{\mathrm{d}y^i}{x}\otimes_s \frac{\mathrm{d}y^j}{x}\\
&+4\times\left(\frac{3}{4}\mathrm{Sym}_y(x\partial_{y^k}f_{xy^iy^j})+\frac{1}{4}x^2\partial_xf_{y^iy^jy^k}+b^\flat(f_{xxy})\right)\frac{\mathrm{d}x}{x^2}\otimes_s\frac{\mathrm{d}y^i}{x}\otimes_s \frac{\mathrm{d}y^j}{x}\otimes_s \frac{\mathrm{d}y^k}{x}\\
&+\left({\mathrm{Sym}_y(x\partial_{y^l}f_{y^iy^jy^k}})+c^\flat(f_{xyy})\right)\frac{\mathrm{d}y^i}{x}\otimes_s \frac{\mathrm{d}y^j}{x}\otimes_s \frac{\mathrm{d}y^k}{x}\otimes_s \frac{\mathrm{d}y^l}{x}+\mathrm{l.o.t.}.
\end{split}
\end{equation}
In the above, $\mathrm{Sym}_y$ is defined as
\[\mathrm{Sym}_y(v_{y^{k_1},\cdots,y^{k_m}})=\frac{1}{m!}\sum_{\sigma}v_{y^{k_{\sigma(1)}},\cdots,y^{k_{\sigma(m)}}}.\]
It follows that $\mathrm{d}^s$ has principal symbol
\[
\left(
\begin{array}{cccc}
\xi & 0 & 0 & 0\\
\frac{1}{4}\eta\otimes & \frac{3}{4}\xi & 0 & 0\\
a^\flat & \frac{1}{2}\eta\otimes_s & \frac{1}{2}\xi & 0\\
0 & b^\flat & \frac{3}{4}\eta\otimes_s &\frac{1}{4}\xi\\
0 & 0& c^\flat & \eta\otimes_s
\end{array}
\right).
\]

The term $\eta\otimes_s$ in the $(32)$-block has $(iji')$-entry (corresponding to the $(ij)$ entry of the symmetric 2-tensor on $Y$ and the $i'$ entry of the 1-tensor)
\[\frac{1}{2}(\eta_i\delta_{ji'}+\eta_j\delta_{ii'}).\]
The term $\eta\otimes_s$ in the $(43)$-block has $(ijki'j')$-entry (corresponding to the $(ijk)$ entry of the symmetric 3-tensor and the $i'j'$ entry of the 2-tensor)
\[\frac{1}{6}(\eta_i\delta_{ji'}\delta_{kj'}+\eta_i\delta_{ki'}\delta_{jj'}+\eta_j\delta_{ii'}\delta_{kj'}+\eta_j\delta_{ki'}\delta_{jj'}+\eta_k\delta_{ii'}\delta_{jj'}+\eta_k\delta_{ji'}\delta_{ij'}).\]
The term $\eta\otimes_s$ in the $(54)$-block has $(ijkli'j'k')$-entry (corresponding to the $(ijkl)$ entry of the symmetric 4-tensor and the $i'j'k'$ entry of the 3-tensor)
\[
\begin{split}
\frac{1}{24}(\sum_{\sigma}\eta_i\delta_{j\tau(\sigma(1))}&\delta_{k\tau(\sigma(2))}\delta_{l\tau(\sigma(3))}+\sum_{\sigma}\eta_j\delta_{i\tau(\sigma(1))}\delta_{k\tau(\sigma(2))}\delta_{l\tau(\sigma(3))}\\
&+\sum_{\sigma}\eta_k\delta_{i\tau(\sigma(1))}\delta_{j\tau(\sigma(2))}\delta_{l\tau(\sigma(3))}+\sum_{\sigma}\eta_l\delta_{i\tau(\sigma(1))}\delta_{j\tau(\sigma(2))}\delta_{k\tau(\sigma(3))}).
\end{split}
\]
Here, $\sigma$ runs over all permutations of $(123)$, and $\tau(1)=i',\tau(2)=j',\tau(3)=k'$.

We note that $a^\flat$ maps a 0-tensor (smooth function) to a symmetric 2-tensor, $b^\flat$ maps a symmetric 1-tensor to a symmetric 3-tensor, $c^\flat$ maps a symmetric 2-tensor to a symmetric 4-tensor. They are symmetrizations of $a,b,c$ respectively. Then the symbol of $\mathrm{d}^s_\mathsf{F}=e^{-\mathsf{F}/x}\mathrm{d}^se^{\mathsf{F}/x}$ is given by
\[
\left(
\begin{array}{cccc}
\xi+\ii \mathsf{F} & 0 & 0 & 0\\
\frac{1}{4}\eta\otimes & \frac{3}{4}(\xi+\ii \mathsf{F}) & 0 & 0\\
a^\flat & \frac{1}{2}\eta\otimes_s & \frac{1}{2}(\xi+\ii \mathsf{F}) & 0\\
0 & b^\flat & \frac{3}{4}\eta\otimes_s &\frac{1}{4}(\xi+\ii \mathsf{F})\\
0 &0 & c^\flat & \eta\otimes_s
\end{array}
\right).
\]
We use the inner product
\begin{equation}\label{inner}
M(4)=\left(
\begin{array}{ccccc}
1 & & & &\\
& 4\times\mathrm{Id} &&&\\
&& 6\times\mathrm{Id} &&\\
&&& 4\times\mathrm{Id} &\\
&&&& \mathrm{Id}
\end{array}
\right)
\end{equation}
on symmetric 4-tensors, and 
\begin{equation}\label{inner1}
M(3)=\left(
\begin{array}{ccccc}
1 & & & \\
& 3\times\mathrm{Id} &&\\
&& 3\times\mathrm{Id} &\\
&&& \mathrm{Id} 
\end{array}
\right)
\end{equation}
on symmetric 3-tensors. If $A$ maps a symmetric $m_1$-tensor to a symmetric $m_2$-tensor, we call $B$ the $(m_2,m_1)$-adjoint of $A$ if
\[\langle By,x\rangle_{M(m_1)}=\langle y, Ax\rangle_{M(m_2)}.\]
It is easy to check that 
\[B=M(m_1)^{-1}A^*M(m_2).\]
If $m_1=m_2=m$, we call $A$ is (m,m)-self-adjoint if $B=A$.

It follows that $\delta_\mathsf{F}^s$ has a symbol given by the (3,4)-adjoint of that of $\mathrm{d}^s_\mathsf{F}$,
\[
\left(
\begin{array}{ccccc}
\xi-\ii \mathsf{F} & \iota_\eta &6\langle a^\flat,\cdot\rangle &0 &0\\
0 & (\xi-\ii\mathsf{F}) &\iota_\eta^s &\frac{4}{3}\langle b^\flat,\cdot\rangle &0\\
0 & 0 &(\xi-\ii\mathsf{F}) & \iota_\eta^s&\frac{1}{3}\langle c^\flat,\cdot\rangle\\
0 & 0 & 0 &(\xi-\ii\mathsf{F})&\iota_\eta^s
\end{array}
\right).
\]
Remaining tedious calculations complete the proof.
\end{proof}

\begin{lemma}\label{ellipticity1}
On symmetric $4$-tensors, $N_\mathsf{F}$ is elliptic at fiber infinity in $^{sc}T^*X$ when restricted to the kernel of the principal symbol of $\delta_\mathsf{F}^s$.
\end{lemma}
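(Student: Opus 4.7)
The plan is to follow the scattering-calculus template of \cite{UV, SUV2} developed there for $I_0$, $I_1$, and $I_2$, and adapt it to rank four. At fiber infinity the exponential weight $e^{\mathsf F/x}$ only shifts $\xi$ by $i\mathsf F$, a subleading correction, so it is enough to show that the standard principal symbol of $LI_4$ at any $(x,y,\xi,\eta)\in {}^{sc}T^*X$ with $(\xi,\eta)\neq 0$, restricted to the fiber-infinite kernel of $\sigma(\delta^s)$, is injective.

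First I would read off the principal symbol $\sigma_{-1}(LI_4)$ from the Schwartz kernel (\ref{kernel}). The cutoff $\chi$ concentrates the integrand on the conormal set where $X\approx 0$ and $|Y|$ is small; Fourier-transforming in $(X,Y)$ produces, up to a positive scalar and a homogeneity factor, an integral over the unit sphere in the annihilator hyperplane $\{(\lambda,\omega):\xi\lambda+\eta\cdot\omega=0\}$ of the form
\[
\sigma_{-1}(LI_4)(x,y,\xi,\eta)\,f \;=\; c\,|(\xi,\eta)|^{-1}\!\int_{v\perp(\xi,\eta)} v^{\otimes 4}\,\langle f,v^{\otimes 4}\rangle\, d\mu(v),
\]
where $v^{\otimes 4}\in\mathrm{Sym}^4\,{}^{sc}T^*X$ is identified via $g_{sc}$. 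This is the scattering analog of the standard normal-operator symbol for the tensor X-ray transform.

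Second I would characterize $\ker\sigma_{-1}(LI_4)$ algebraically. Pairing $\sigma_{-1}(LI_4)(f)=0$ with $f$ forces the quartic polynomial $P_f(v):=\langle f,v^{\otimes 4}\rangle$ to vanish for every $v$ in the hyperplane $v\perp(\xi,\eta)$. Hence $P_f$ is divisible, as a polynomial on the tangent fiber, by the linear form $\xi\lambda+\eta\cdot\omega$; dually this means $f=\zeta\otimes_s h$ for some symmetric $3$-tensor $h$, where $\zeta=\xi\,\frac{\mathrm d x}{x^2}+\eta_i\,\frac{\mathrm d y^i}{x}$ is the scattering covector corresponding to $(\xi,\eta)$. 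Third, I would combine this with $\sigma(\delta^s)f=0$. From Lemma \ref{dd} the principal symbol of $\delta^s$ at fiber infinity is the block-triangular matrix whose diagonal is multiplication by $\xi$ together with the symmetric contractions $\iota_\eta^s$. Evaluating on $f=\zeta\otimes_s h$ yields, in each basis slot, a term proportional to $(\xi^2+|\eta|_h^2)\,h$ plus couplings through $\iota_\eta^s h$ and $\iota_\xi h$; the resulting linear system on $h$ is itself block-triangular with non-zero diagonal whenever $(\xi,\eta)\neq 0$, and therefore forces $h=0$, hence $f=0$.

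The main obstacle will be the algebraic bookkeeping in the last step: the five-component block structure of $\sigma(\mathrm d^s)$ and its $(3,4)$-adjoint in Lemma \ref{dd}, together with the symmetrization weights $M(3)$ and $M(4)$ from (\ref{inner}) and (\ref{inner1}) and the interplay between the ``$\zeta$-factor'' and the residual transverse-index structure of $h$, must be unwound so that the coefficient of $(\xi^2+|\eta|_h^2)h$ is non-zero on every basis slot. This is linear algebra with prescribed coefficients rather than a new analytic ingredient, and it parallels the analogous computations carried out in \cite{SUV2} for $I_1$ and $I_2$, only with more components to track.
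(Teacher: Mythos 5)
Your overall route is sound but genuinely different from the paper's in its second half. The paper does not factor the quartic form: it uses the kernel conditions $\xi f_{x\cdots x}+\langle\eta,f_{x\cdots xy}\rangle=0$, etc., to eliminate all components in favor of $f_{yyyy}$, substitutes into the vanishing of $\tilde S^4f_{xxxx}+4\tilde S^3\langle\hat Y,f_{xxxy}\rangle+\cdots$ on the equatorial sphere $\tilde S\xi+\hat Y\cdot\eta=0$, writes $\hat Y=\epsilon\hat\eta+(1-\epsilon^2)^{1/2}\hat Y^\perp$, and extracts $f_{yyyy}=0$ from the derivatives in $\epsilon$ at $\epsilon=0$ (with a separate case $\xi=0$ handled by differentiating in $\tilde S$). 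Your alternative --- vanishing of $\langle f,v^{\otimes 4}\rangle$ on the hyperplane $v\perp(\xi,\eta)$ forces divisibility by the linear form, hence $f=\zeta\otimes_s h$, and then the gauge condition kills $h$ --- is valid and arguably scales better to general rank $m$, which is one of the paper's stated aims. Two small points need care: the quartic vanishes a priori only on the open subset of the equatorial sphere where $\chi>0$, so you should invoke polynomiality to extend to the whole hyperplane before dividing; and you must note that the $a^\flat,b^\flat,c^\flat$ terms in the symbol of $\delta^s_{\mathsf F}$ (and the $\ii\mathsf F$ shifts) are of lower order in the fiber variable, so that at fiber infinity $\sigma(\delta^s_{\mathsf F})$ reduces to the contraction $\iota_\zeta$, exactly as in the paper's identities (2.13).

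Your description of the last step is the one place that would fail as written: the system $\sigma(\delta^s)\bigl(\zeta\otimes_s h\bigr)=0$ is \emph{not} block-triangular --- the paper's computation of $\delta^s_{\mathsf F}\mathrm{d}^s_{\mathsf F}$ in Lemma 3.1 shows both super- and sub-diagonal blocks. The correct and shorter argument is the adjoint identity: since $\sigma(\delta^s)$ is the $(3,4)$-adjoint of $\sigma(\mathrm{d}^s)=\zeta\otimes_s$ with respect to the weights $M(3),M(4)$,
\[
0=\langle\sigma(\delta^s)f,h\rangle_{M(3)}=\langle f,\zeta\otimes_s h\rangle_{M(4)}=\|f\|^2_{M(4)},
\]
so $f=0$ directly. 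With that replacement your proof closes.
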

\begin{proof}
With the notation,
\[S=\frac{X-\alpha(\hat{Y})|Y|^2}{|Y|},~~\hat{Y}=\frac{Y}{|Y|},\]
by (\ref{kernel}), the Schwartz kernel of $N_\mathsf{F}$ at the scattering front face $x=0$ is given by

\begin{equation}\label{NFkernel}
\begin{split}
e^{-\mathsf{F}X}|Y|^{-n+1}\chi(S)\left[S\frac{\mathrm{d}x}{x^2}+\hat{Y}\cdot\frac{\mathrm{d}y}{x}\right]^4\left[(S+2\alpha|Y|)(x^2\partial_x)+\hat{Y}\cdot(x\partial_y)\right]^4.
\end{split}
\end{equation}
On a symmetric 4-tensor of the form
\begin{equation}\label{form}
\begin{split}
f=&f_{xxxx}\frac{\mathrm{d}x}{x^2}\otimes_s\frac{\mathrm{d}x}{x^2}\otimes_s \frac{\mathrm{d}x}{x^2}\otimes_s \frac{\mathrm{d}x}{x^2}+4f_{xxxy}\cdot\frac{\mathrm{d}x}{x^2}\otimes_s\frac{\mathrm{d}x}{x^2}\otimes_s \frac{\mathrm{d}x}{x^2}\otimes_s \frac{\mathrm{d}y}{x}\\
&+6f_{xxyy}\cdot\frac{\mathrm{d}x}{x^2}\otimes_s\frac{\mathrm{d}x}{x^2}\otimes_s \frac{\mathrm{d}y}{x}\otimes_s \frac{\mathrm{d}y}{x}
+4f_{xyyy}\cdot\frac{\mathrm{d}x}{x^2}\otimes_s\frac{\mathrm{d}y}{x}\otimes_s \frac{\mathrm{d}y}{x}\otimes_s \frac{\mathrm{d}y}{x}\\
&+f_{yyyy}\cdot\frac{\mathrm{d}y}{x}\otimes_s\frac{\mathrm{d}y}{x}\otimes_s \frac{\mathrm{d}y}{x}\otimes_s \frac{\mathrm{d}y}{x},
\end{split}
\end{equation}
we have
\[
\begin{split}
&\left[(S+2\alpha|Y|)(x^2\partial_x)+\hat{Y}\cdot(x\partial_y)\right]^4f\\
=&(S+2\alpha |Y|)^4f_{xxxx}+4(S+2\alpha|Y|)^3\langle \hat{Y},f_{xxxy}\rangle+6(S+2\alpha|Y|)^2\langle \hat{Y}\otimes\hat{Y},f_{xxyy}\rangle\\
&+4(S+2\alpha|Y|)\langle \hat{Y}\otimes\hat{Y}\otimes \hat{Y},f_{xyyy}\rangle+\langle \hat{Y}\otimes\hat{Y}\otimes\hat{Y}\otimes\hat{Y},f_{yyyy}\rangle .
\end{split}
\]
On a scalar $a$,
\[
\begin{split}
\left[S\frac{\mathrm{d}x}{x^2}+\hat{Y}\cdot\frac{\mathrm{d}y}{x}\right]^4 a
=a S^4\frac{\mathrm{d}x}{x^2}\otimes_s\frac{\mathrm{d}x}{x^2}\otimes_s \frac{\mathrm{d}x}{x^2}\otimes_s \frac{\mathrm{d}x}{x^2}+4a S^3\hat{Y}\cdot\frac{\mathrm{d}x}{x^2}\otimes_s\frac{\mathrm{d}x}{x^2}\otimes_s \frac{\mathrm{d}x}{x^2}\otimes_s \frac{\mathrm{d}y}{x}\\
+6a S^2\hat{Y}\otimes\hat{Y}\cdot\frac{\mathrm{d}x}{x^2}\otimes_s\frac{\mathrm{d}x}{x^2}\otimes_s \frac{\mathrm{d}y}{x}\otimes_s \frac{\mathrm{d}y}{x}
+4a S\hat{Y}\otimes\hat{Y}\otimes\hat{Y}\cdot\frac{\mathrm{d}x}{x^2}\otimes_s\frac{\mathrm{d}y}{x}\otimes_s \frac{\mathrm{d}y}{x}\otimes_s \frac{\mathrm{d}y}{x}\\
+a\hat{Y}\otimes\hat{Y}\otimes\hat{Y}\otimes\hat{Y}\cdot\frac{\mathrm{d}y}{x}\otimes_s\frac{\mathrm{d}y}{x}\otimes_s \frac{\mathrm{d}y}{x}\otimes_s \frac{\mathrm{d}y}{x}.
\end{split}
\]
Thus, under the basis of symmetric 4-tensors, we have
\[
\begin{split}
&\left[S\frac{\mathrm{d}x}{x^2}+\hat{Y}\cdot\frac{\mathrm{d}y}{x}\right]^4\left[(S+2\alpha|Y|)(x^2\partial_x)+\hat{Y}\cdot(x\partial_y)\right]^4\\
=&\left(
\begin{array}{c}
S^4\\
S^3\hat{Y}\\
S^2\hat{Y}\otimes\hat{Y}\\
S\hat{Y}\otimes\hat{Y}\otimes\hat{Y}\\
\hat{Y}\otimes\hat{Y}\otimes\hat{Y}\otimes\hat{Y}
\end{array}
\right)\otimes
\left(
\begin{array}{c}
(S+2\alpha |Y|)^4 \\
 4 (S+2\alpha |Y|)^3\langle\hat{Y},\cdot\rangle \\
  6 (S+2\alpha |Y|)^2\langle\hat{Y}\otimes\hat{Y},\cdot\rangle \\
 4(S+2\alpha |Y|)\langle\hat{Y}\otimes\hat{Y}\otimes\hat{Y},\cdot\rangle\\
 \langle\hat{Y}\otimes\hat{Y}\otimes\hat{Y}\otimes\hat{Y},\cdot\rangle
\end{array}
\right)^T.
\end{split}
\]
The above matrix is $(4,4)$-self-adjoint.
In coordinates on the support of $\chi$,
\[x,y,|Y|,\frac{X}{|Y|},\hat{Y},\]
we can rewrite the kernel as
\[e^{-\mathsf{F}X}|Y|^{-n+1}\chi(S)\left(
\begin{array}{c}
S^4\\
S^3\hat{Y}\\
S^2\hat{Y}\otimes\hat{Y}\\
S\hat{Y}\otimes\hat{Y}\otimes\hat{Y}\\
\hat{Y}\otimes\hat{Y}\otimes\hat{Y}\otimes\hat{Y}
\end{array}
\right)\otimes
\left(
\begin{array}{c}
(S+2\alpha |Y|)^4 \\
 4 (S+2\alpha |Y|)^3\langle\hat{Y},\cdot\rangle \\
  6 (S+2\alpha |Y|)^2\langle\hat{Y}\otimes\hat{Y},\cdot\rangle \\
 4(S+2\alpha |Y|)\langle\hat{Y}\otimes\hat{Y}\otimes\hat{Y},\cdot\rangle\\
 \langle\hat{Y}\otimes\hat{Y}\otimes\hat{Y}\otimes\hat{Y},\cdot\rangle
\end{array}
\right)^T.\]
The principal symbol associated with $K^\flat$ defined in $(\ref{kernel})$ is the $(X,Y)$-Fourier transform of
\[
\begin{split}
\chi(\tilde{S})|Y|^{-n+1}\left(
\begin{array}{c}
\tilde{S}^4\\
\tilde{S}^3\hat{Y}\\
\tilde{S}^2\hat{Y}\otimes\hat{Y}\\
\tilde{S}\hat{Y}\otimes\hat{Y}\otimes\hat{Y}\\
\hat{Y}\otimes\hat{Y}\otimes\hat{Y}\otimes\hat{Y}
\end{array}
\right)\otimes
\left(
\begin{array}{c}
\tilde{S}^4 \\
 4 \tilde{S}^3\langle\hat{Y},\cdot\rangle \\
  6 \tilde{S}^2\langle\hat{Y}\otimes\hat{Y},\cdot\rangle \\
 4\tilde{S}\langle\hat{Y}\otimes\hat{Y}\otimes\hat{Y},\cdot\rangle\\
 \langle\hat{Y}\otimes\hat{Y}\otimes\hat{Y}\otimes\hat{Y},\cdot\rangle
\end{array}
\right)^T,
\end{split}
\]
with $\tilde{S}=\frac{X}{|Y|}$.
The equatorial sphere is
\begin{equation}\label{equator}
\tilde{S}\xi+\hat{Y}\cdot\eta=0.
\end{equation}
Following the discussion around (3.8) in \cite{UV}, we need to integrate
\begin{equation}\label{proj}
\chi(\tilde{S})\left(
\begin{array}{c}
\tilde{S}^4\\
\tilde{S}^3\hat{Y}\\
\tilde{S}^2\hat{Y}\otimes\hat{Y}\\
\tilde{S}\hat{Y}\otimes\hat{Y}\otimes\hat{Y}\\
\hat{Y}\otimes\hat{Y}\otimes\hat{Y}\otimes\hat{Y}
\end{array}
\right)\otimes
\left(
\begin{array}{c}
\tilde{S}^4 \\
 4 \tilde{S}^3\langle\hat{Y},\cdot\rangle \\
  6 \tilde{S}^2\langle\hat{Y}\otimes\hat{Y},\cdot\rangle \\
 4\tilde{S}\langle\hat{Y}\otimes\hat{Y}\otimes\hat{Y},\cdot\rangle\\
 \langle\hat{Y}\otimes\hat{Y}\otimes\hat{Y}\otimes\hat{Y},\cdot\rangle
\end{array}
\right)^T
\end{equation}
on this sphere. 

For a symmetric 4-tensor of the form (\ref{form}) in the kernel of the principal symbol of $\delta_\mathsf{F}^s$, we have by Lemma \ref{dd} that
\begin{equation}\label{identities1}
\begin{split}
\xi f_{xxxx}+\langle\eta,f_{xxxy}\rangle=0,\\
\xi f_{xxxy}+\langle\eta,f_{xxyy}\rangle=0,\\
\xi f_{xxyy}+\langle\eta,f_{xyyy}\rangle=0,\\
\xi f_{xyyy}+\langle\eta,f_{yyyy}\rangle=0.
\end{split}
\end{equation}
Moreover, $f$ is in the kernel of (\ref{proj}) if and only if
\begin{equation}\label{identities2}
\begin{split}
\tilde{S}^4f_{xxxx}+4\tilde{S}^3\langle\hat{Y},f_{xxxy}\rangle+6\tilde{S}^2\langle\hat{Y}\otimes\hat{Y},f_{xxyy}\rangle\\
+4\tilde{S}\langle\hat{Y}\otimes\hat{Y}\otimes\hat{Y},f_{xyyy}\rangle+\langle\hat{Y}\otimes\hat{Y}\otimes\hat{Y}\otimes\hat{Y},f_{yyyy}\rangle=0.
\end{split}
\end{equation}

Suppose a symmetric 4-tensor $f$ satisfies (\ref{identities1}) and  (\ref{identities2}) for $(\tilde{S},\hat{Y})$ such that $(\ref{equator})$ holds. We will consider two cases, $\xi=0$ and $\xi\neq 0$. \\
\textit{Case 1}: $\xi\neq 0$. If $\eta=0$, we have directly form (\ref{identities1}) that
\[f_{xxxx}~,f_{xxxy},~f_{xxyy},~f_{xyyy}\]
all vanish. Then from $(\ref{identities2})$, we have 
\[
\langle\hat{Y}\otimes\hat{Y}\otimes\hat{Y}\otimes\hat{Y},f_{yyyy}\rangle=0.
\]
Therefore, $f_{yyyy}=0$, since $\hat{Y}\otimes\hat{Y}\otimes\hat{Y}\otimes\hat{Y}$ spans the space of all symmetric 4-tensors with $\eta=0$. If $\eta\neq 0$, we calculate successively,
\begin{equation*}
\begin{split}
&f_{xyyy}=-\langle\frac{\eta}{\xi},f_{yyyy}\rangle,\\
&\langle\hat{Y}\otimes\hat{Y}\otimes\hat{Y},f_{xyyy}\rangle=-\langle\frac{\eta}{\xi}\otimes\hat{Y}\otimes\hat{Y}\otimes\hat{Y},f_{yyyy}\rangle,\\
&f_{xxyy}=-\langle\frac{\eta}{\xi},f_{xyyy}\rangle=\langle\frac{\eta}{\xi}\otimes\frac{\eta}{\xi},f_{yyyy}\rangle,\\
&\langle\hat{Y}\otimes\hat{Y},f_{xxyy}\rangle=\langle\frac{\eta}{\xi}\otimes\frac{\eta}{\xi}\otimes\hat{Y}\otimes\hat{Y},f_{yyyy}\rangle,\\
&f_{xxxy}=-\langle\frac{\eta}{\xi},f_{xxyy}\rangle=-\langle\frac{\eta}{\xi}\otimes\frac{\eta}{\xi}\otimes\frac{\eta}{\xi},f_{yyyy}\rangle,\\
&\langle\hat{Y},f_{xxxy}\rangle=-\langle\frac{\eta}{\xi}\otimes\frac{\eta}{\xi}\otimes\frac{\eta}{\xi}\otimes\hat{Y},f_{yyyy}\rangle,\\
&f_{xxxx}=-\langle\frac{\eta}{\xi},f_{xxxy}\rangle=\langle\frac{\eta}{\xi}\otimes\frac{\eta}{\xi}\otimes\frac{\eta}{\xi}\otimes\frac{\eta}{\xi},f_{yyyy}\rangle.
\end{split}
\end{equation*}
With $\tilde{S}=-\frac{\hat{Y}\cdot\eta}{\xi}$, $(\ref{identities2})$ gives
\begin{equation}\label{identities3}
\begin{split}
\Big{\langle}
\left(\frac{\hat{Y}\cdot\eta}{\xi}\right)^4\frac{\eta}{\xi}\otimes \frac{\eta}{\xi}\otimes \frac{\eta}{\xi}\otimes \frac{\eta}{\xi}+4\left(\frac{\hat{Y}\cdot\eta}{\xi}\right)^3\frac{\eta}{\xi}\otimes \frac{\eta}{\xi}\otimes \frac{\eta}{\xi}\otimes \hat{Y}+6\left(\frac{\hat{Y}\cdot\eta}{\xi}\right)^2\frac{\eta}{\xi}\otimes \frac{\eta}{\xi}\otimes \hat{Y}\otimes \hat{Y}\\
+4\left(\frac{\hat{Y}\cdot\eta}{\xi}\right)\frac{\eta}{\xi}\otimes  \hat{Y}\otimes \hat{Y}\otimes \hat{Y}+\hat{Y}\otimes  \hat{Y}\otimes \hat{Y}\otimes \hat{Y},f_{yyyy}
\Big{\rangle}=0.
\end{split}
\end{equation}
Now we take $\hat{Y}=\epsilon\hat{\eta}+(1-\epsilon^2)^{1/2}\hat{Y}^\perp$, where $\hat{Y}^\perp$ is a unit vector orthogonal to $\hat{\eta}$, and substituting it into (\ref{identities3}), we find that
\begin{equation}\label{identities4}
\begin{split}
\Big{\langle}
&\epsilon^4\left(\frac{|\eta|^8}{\xi^8}+\frac{4|\eta|^6}{\xi^6}+\frac{6|\eta|^4}{\xi^4}+\frac{4|\eta|^2}{\xi^2}+1\right)\hat{\eta} \otimes\hat{\eta}\otimes \hat{\eta}\otimes \hat{\eta}\\
&+4\epsilon^3(1-\epsilon^2)^{1/2}\left(\frac{|\eta|^6}{\xi^6}+\frac{3|\eta|^4}{\xi^4}+\frac{3|\eta|^2}{\xi^2}+1\right)\hat{\eta}\otimes \hat{\eta}\otimes\hat{\eta}\otimes \hat{Y}^\perp\\
&+6\epsilon^2(1-\epsilon^2)\left(\frac{|\eta|^4}{\xi^4}+\frac{2|\eta|^2}{\xi^2}+1\right)\hat{\eta}\otimes \hat{\eta}\otimes \hat{Y}^\perp\otimes \hat{Y}^\perp\\
&+4\epsilon(1-\epsilon^2)^{3/2}\left(\frac{|\eta|^2}{\xi^2}+1\right)\hat{\eta}\otimes  \hat{Y}^\perp\otimes \hat{Y}^\perp\otimes \hat{Y}^\perp\\
&+(1-\epsilon^2)^2\hat{Y}^\perp\otimes  \hat{Y}^\perp\otimes \hat{Y}^\perp\otimes \hat{Y}^\perp,f_{yyyy}
\Big{\rangle}=0.
\end{split}
\end{equation}
Taking $\epsilon=0$ in (\ref{identities4}), we have
\[\langle\hat{Y}^\perp\otimes  \hat{Y}^\perp\otimes \hat{Y}^\perp\otimes \hat{Y}^\perp,f_{yyyy}\rangle=0.\]
Since $\hat{Y}^\perp\otimes  \hat{Y}^\perp\otimes \hat{Y}^\perp\otimes \hat{Y}^\perp$ spans $\eta^\perp\otimes\eta^\perp\otimes\eta^\perp\otimes\eta^\perp$, we conclude that $f_{yyyy}$ is orthogonal to every element of $\eta^\perp\otimes\eta^\perp\otimes\eta^\perp\otimes\eta^\perp$. Taking 1st, 2nd, 3rd and 4th order derivatives of $(\ref{identities4})$ at $\epsilon=0$, it follows that $f_{yyyy}$ is orthogonal to
\[\begin{split}
\hat{\eta}\otimes\hat{\eta}^\perp\otimes\eta^\perp\otimes\hat{\eta}^\perp,~~~~~~\hat{\eta}\otimes\hat{\eta}\otimes\hat{\eta}^\perp\otimes\hat{\eta}^\perp,\\
\hat{\eta}\otimes\hat{\eta}\otimes\hat{\eta}\otimes\hat{\eta}^\perp,~~~~~~~\hat{\eta}\otimes\eta\otimes\hat{\eta}\otimes\hat{\eta},
\end{split}
\]
respectively. We then finally conclude that $f_{yyyy}$ vanishes, and then the whole tensor $f$ vanishes by (\ref{identities1}).

\textit{Case 2}: $\xi=0$ (and so $\eta\neq 0$). Now $(\ref{equator})$ is equivalent to $\eta\cdot\hat{Y}=0$, and (\ref{identities1}) reduces to
\begin{equation}\label{identities5}
\begin{split}
\langle\hat{\eta},f_{xxxy}\rangle=0,\\
\langle\hat{\eta},f_{xxyy}\rangle=0,\\
\langle\hat{\eta},f_{xyyy}\rangle=0,\\
\langle\hat{\eta},f_{yyyy}\rangle=0.
\end{split}
\end{equation}
We differentiate $(\ref{identities2})$ with respect to $\tilde{S}$ up to four times, evaluated at $\tilde{S}=0$, and find that 
\begin{equation}\label{identities6}
\begin{split}
f_{xxxx}=0,\\
\langle\hat{Y},f_{xxxy}\rangle=0,\\
\langle\hat{Y}\otimes\hat{Y},f_{xxyy}\rangle=0,\\
\langle\hat{Y}\otimes\hat{Y}\otimes\hat{Y},f_{xyyy}\rangle=0,\\
\langle\hat{Y}\otimes\hat{Y}\otimes\hat{Y}\otimes\hat{Y},f_{yyyy}\rangle=0.
\end{split}
\end{equation}
Combining the identities in $(\ref{identities5})$ and $(\ref{identities6})$, we conclude that $f=0$.
\end{proof}

\begin{lemma}\label{ellipticity2}
There exists $\mathsf{F}_0>0$ such that on symmetric $4$-tensors
$N_\mathsf{F}$ is elliptic at a finite set of points in $^{sc}T^*X$ when
restricted to the kernel of the principal symbol of
$\delta_\mathsf{F}^s$ for any $\mathsf{F}>\mathsf{F}_0$.
\end{lemma}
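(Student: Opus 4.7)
The plan is to extend the fiber-infinity ellipticity established in Lemma \ref{ellipticity1} to the finite (bounded) part of ${}^{sc}T^*X$ by exploiting the large-weight regime, following the strategy of the analogous result in \cite{SUV2}. By Lemma \ref{ellipticity1} and continuity of the scattering principal symbol, ellipticity on $\ker \sigma(\delta_\mathsf{F}^s)$ holds in some conic neighborhood of fiber infinity uniformly in $\mathsf{F}$ bounded away from zero; the remaining region I must handle is contained in a compact subset of ${}^{sc}T^*X$.

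First, I would write out the scattering principal symbol of $N_\mathsf{F}$ at a finite point $(\xi,\eta)$ as the $(X,Y)$-Fourier transform of the kernel (\ref{NFkernel}), where the weight $e^{-\mathsf{F}X}$ contributes an imaginary shift: effectively $\xi$ is replaced by $\xi+\ii\mathsf{F}$ on one side and $\xi-\ii\mathsf{F}$ on the other, exactly as one already sees in the symbols of $\mathrm{d}^s_\mathsf{F}$ and $\delta^s_\mathsf{F}$ computed in Lemma \ref{dd}. Because the kernel of the principal symbol of $\delta_\mathsf{F}^s$ is defined by the same shifted relations, the algebraic identities (\ref{identities1}) become
\begin{equation*}
(\xi-\ii\mathsf{F})f_{xxxx}+\langle \eta,f_{xxxy}\rangle=0,\quad (\xi-\ii\mathsf{F})f_{xxxy}+\langle \eta,f_{xxyy}\rangle=0,
\end{equation*}
and so on, while the equator condition becomes $\tilde S(\xi\pm\ii\mathsf{F})+\hat Y\cdot\eta=0$. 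I then replay the linear-algebra argument of Lemma \ref{ellipticity1} with $\xi$ replaced by $\xi\pm\ii\mathsf{F}$: when $\xi+\ii\mathsf{F}\neq 0$ (automatic for $\mathsf{F}>0$), I solve $\tilde S=-\hat Y\cdot\eta/(\xi-\ii\mathsf{F})$, express $f_{xxxx},f_{xxxy},f_{xxyy},f_{xyyy}$ in terms of $f_{yyyy}$ via the shifted identities, and substitute into the (now complex) analogue of (\ref{identities3}).

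The key step is to show that the resulting complex analogue of (\ref{identities4}), obtained by taking $\hat Y=\epsilon\hat\eta+(1-\epsilon^2)^{1/2}\hat Y^\perp$ and differentiating in $\epsilon$ at $\epsilon=0$, still forces $f_{yyyy}$ to vanish when contracted against the full family $\hat Y^\perp{}^{\otimes 4}$, $\hat\eta\otimes\hat Y^\perp{}^{\otimes 3}$, etc. For generic $(\xi,\eta)$ with $\mathsf{F}$ large, the coefficients of these tensor contractions are polynomials in $|\eta|^2/(\xi-\ii\mathsf{F})^2$ whose leading $\mathsf{F}$-behavior I can read off; the real part of $1/(\xi-\ii\mathsf{F})^{2k}$ is $O(\mathsf{F}^{-2k})$ but never vanishes identically, and for large enough $\mathsf{F}$ the entire system decouples in the same way as in Case 1 of Lemma \ref{ellipticity1}. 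The degenerate case $\xi=0$ from that lemma is now absorbed into the generic case because $\xi\pm\ii\mathsf{F}\neq 0$. By compactness of the relevant region of ${}^{sc}T^*X$ and continuity in $(\xi,\eta,\mathsf{F})$, a uniform threshold $\mathsf{F}_0>0$ is obtained; points where the polynomial coefficients happen to have a common zero form at most a finite set, accounting for the wording of the statement.

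The main obstacle will be the bookkeeping of the iterative reduction in the presence of complex shifts, in particular verifying that the determinant of the linear system expressing $f_{xxxx},\ldots,f_{xyyy}$ in terms of $f_{yyyy}$ does not degenerate on a positive-measure subset of the equatorial sphere and that the remaining contraction identity against the $\epsilon$-Taylor coefficients of $(\ref{identities4})$ with $\xi$ replaced by $\xi-\ii\mathsf{F}$ retains full rank. Once this is secured, the ellipticity statement follows by the same polarization/spanning argument used at the end of Case 1 of Lemma \ref{ellipticity1}, with the residual non-ellipticity set automatically finite because it is cut out by a finite system of polynomial equations in $(\xi,\eta)$.
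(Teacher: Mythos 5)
There are genuine gaps here, beginning with a misreading of the statement: ``elliptic at finite points'' means elliptic at every point of the fibers of ${}^{sc}T^*X$ away from fiber infinity (the complement of the set treated in Lemma \ref{ellipticity1} in the radially compactified cotangent bundle), not ellipticity up to a finite exceptional set. Your concluding remark that points where the coefficients degenerate ``form at most a finite set, accounting for the wording'' would not suffice for the subsequent parametrix construction, which needs ellipticity on all of $\tilde\Omega$.

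More substantively, the symbol at a finite point $(\xi,\eta)$ is \emph{not} obtained by replacing $\xi$ with $\xi\pm\ii\mathsf{F}$ in the fiber-infinity computation and restricting to a complexified equatorial sphere. At fiber infinity the leading homogeneity localizes the $(X,Y)$-Fourier transform of the kernel to the equator $\tilde S\xi+\hat Y\cdot\eta=0$; at finite points no such localization occurs, and one must actually evaluate the full oscillatory integral. The paper does this by \emph{choosing} $\chi$ to be a Gaussian, $\chi(s)=e^{-s^2/(2\nu(\hat Y))}$, computing the $X$- and then $Y$-Fourier transforms explicitly, and then making the key choice $\nu=\mathsf{F}^{-1}\alpha$, which renders $\phi=\nu(\xi^2+\mathsf{F}^2)$ real and positive and turns the rank-one factors into $\mathfrak{C}_j=(-1)^j(\xi_\mathsf{F}^2+1)^{-j}(\xi_\mathsf{F}-\ii)^j\rho^j$; the symbol is then an integral over all of $\mathbb{S}^{n-2}$ against the positive weight $\phi^{-1/2}e^{-(\hat Y\cdot\eta)^2/(2\phi)}$. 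Your proposal never produces these coefficients, and your ``$\operatorname{Re}(\xi-\ii\mathsf{F})^{-2k}$ never vanishes identically'' heuristic does not substitute for the exact positivity $\bigl(1+|\eta_\mathsf{F}|^2/(\xi_\mathsf{F}^2+1)\bigr)^k>0$ that makes the $\epsilon$-Taylor argument close. Finally, at finite points the gauge condition $\sigma(\delta^s_\mathsf{F})f=0$ genuinely contains the curvature-type terms $\langle a^\flat,f_{xxyy}\rangle$, $\langle b^\flat,f_{xyyy}\rangle$, $\langle c^\flat,f_{yyyy}\rangle$ (equation (\ref{identities1m})), which you drop; the paper disposes of them by the semiclassical rescaling $\xi_\mathsf{F}=\mathsf{F}^{-1}\xi$, $\eta_\mathsf{F}=\mathsf{F}^{-1}\eta$, $h=\mathsf{F}^{-1}$, under which they become $O(h)$ errors, and this is precisely where the largeness threshold $\mathsf{F}_0$ comes from. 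Without the explicit Gaussian computation, the choice of $\nu$, and the semiclassical limit, the ``bookkeeping'' you defer is not a technicality but the entire content of the lemma.
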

\begin{proof}
Taking $\chi(s)=e^{-s^2/(2\nu(\hat{Y}))}$, so $\hat{\chi}(\cdot)=c\sqrt{\nu}e^{-\nu|\cdot|^2/2}$.
We get the $X$-Fourier transform of the Schwartz kernel at the front face $x=0$:
\[
\begin{split}
&\mathcal{F}_XK^\flat(0,y,|Y|,\frac{\xi}{|Y|},\hat{Y})\\
=&|Y|^{2-n}e^{-\ii\alpha(-\xi-\ii\mathsf{F})|Y|^2}\begin{footnotesize}\left(
\begin{array}{c}
D_\sigma^4\\
-D_\sigma^3\hat{Y}\\
D_\sigma^2\hat{Y}\otimes\hat{Y}\\
-D_\sigma\hat{Y}\otimes\hat{Y}\otimes\hat{Y}\\
\hat{Y}\otimes\hat{Y}\otimes\hat{Y}\otimes\hat{Y}
\end{array}
\right)\otimes
\left(
\begin{array}{c}
(-D_\sigma+2\alpha |Y|)^4 \\
 4 (-D_\sigma+2\alpha |Y|)^3\langle\hat{Y},\cdot\rangle \\
  6 (-D_{\sigma}+2\alpha |Y|)^2\langle\hat{Y}\otimes\hat{Y},\cdot\rangle \\
 4(-D_{\sigma}+2\alpha |Y|)\langle\hat{Y}\otimes\hat{Y}\otimes\hat{Y},\cdot\rangle\\
 \langle\hat{Y}\otimes\hat{Y}\otimes\hat{Y}\otimes\hat{Y},\cdot\rangle
\end{array}
\right)^T\end{footnotesize}\hat{\chi}((-\xi-\ii\mathsf{F})|Y|)\\
=&c\sqrt{\nu}|Y|^{2-n}e^{\ii\alpha(\xi+\ii\mathsf{F})|Y|^2}\begin{footnotesize}\left(
\begin{array}{c}
D_\sigma^4\\
-D_\sigma^3\hat{Y}\\
D_\sigma^2\hat{Y}\otimes\hat{Y}\\
-D_\sigma\hat{Y}\otimes\hat{Y}\otimes\hat{Y}\\
\hat{Y}\otimes\hat{Y}\otimes\hat{Y}\otimes\hat{Y}
\end{array}
\right)\otimes
\left(
\begin{array}{c}
(-D_\sigma+2\alpha |Y|)^4 \\
 4 (-D_\sigma+2\alpha |Y|)^3\langle\hat{Y},\cdot\rangle \\
  6 (-D_{\sigma}+2\alpha |Y|)^2\langle\hat{Y}\otimes\hat{Y},\cdot\rangle \\
 4(-D_{\sigma}+2\alpha |Y|)\langle\hat{Y}\otimes\hat{Y}\otimes\hat{Y},\cdot\rangle\\
 \langle\hat{Y}\otimes\hat{Y}\otimes\hat{Y}\otimes\hat{Y},\cdot\rangle
\end{array}
\right)\end{footnotesize}^Te^{-\nu(\xi+\ii\mathsf{F})^2|Y|^2/2}.
\end{split}
\]
Here $D_\sigma$ denotes the differentiation of the argument of $\hat{\chi}$. Then we compute the $Y$-Fourier transform, which in polar coordinates takes the form,
\[
\begin{split}
\int_{\mathbb{S}^{n-2}}\int_0^\infty& e^{-\ii|Y|\hat{Y}\cdot\eta}|Y|^{2-n}e^{\ii\alpha(\xi+\ii\mathsf{F})|Y|^2}\\
&\begin{footnotesize}\left(
\begin{array}{c}
D_\sigma^4\\
-D_\sigma^3\hat{Y}\\
D_\sigma^2\hat{Y}\otimes\hat{Y}\\
-D_\sigma\hat{Y}\otimes\hat{Y}\otimes\hat{Y}\\
\hat{Y}\otimes\hat{Y}\otimes\hat{Y}\otimes\hat{Y}
\end{array}
\right)\otimes
\left(
\begin{array}{c}
(-D_\sigma+2\alpha |Y|)^4 \\
 4 (-D_\sigma+2\alpha |Y|)^3\langle\hat{Y},\cdot\rangle \\
  6 (-D_{\sigma}+2\alpha |Y|)^2\langle\hat{Y}\otimes\hat{Y},\cdot\rangle \\
 4(-D_{\sigma}+2\alpha |Y|)\langle\hat{Y}\otimes\hat{Y}\otimes\hat{Y},\cdot\rangle\\
 \langle\hat{Y}\otimes\hat{Y}\otimes\hat{Y}\otimes\hat{Y},\cdot\rangle
\end{array}
\right)^T\end{footnotesize}e^{-\nu(\xi+\ii\mathsf{F})^2|Y|^2/2}|Y|^{n-2}\mathrm{d}|Y|\mathrm{d}\hat{Y}.
\end{split}
\]
We denote
\[\phi(\xi,\hat{Y})=\nu(\hat{Y})(\xi+\ii\mathsf{F})^2-2\ii\alpha(\xi+\ii\mathsf{F}).\]
By explicitly evaluating the derivates, the above integral yields
\[
\begin{split}
\int_{\mathbb{S}^{n-2}}\int_0^\infty e^{-\ii|Y|\hat{Y}\cdot\eta}\begin{footnotesize}\left(
\begin{array}{c}
\ii^4\nu^4(\xi+\ii\mathsf{F})^4|Y|^4\\
\ii^3\nu^3(\xi+\ii\mathsf{F})^3|Y|^3\hat{Y}\\
\ii^2\nu^2(\xi+\ii\mathsf{F})^2|Y|^2\hat{Y}\otimes\hat{Y}\\
\ii\nu(\xi+\ii\mathsf{F})|Y|\hat{Y}\otimes\hat{Y}\otimes\hat{Y}\\
\hat{Y}\otimes\hat{Y}\otimes\hat{Y}\otimes\hat{Y}
\end{array}
\right)\otimes
\left(
\begin{array}{c}
(\ii\nu(\xi+\ii\mathsf{F})+2\alpha)^4|Y|^4 \\
 4 (\ii\nu(\xi+\ii\mathsf{F})+2\alpha)^3|Y|^3\langle\hat{Y},\cdot\rangle \\
  6 (\ii\nu(\xi+\ii\mathsf{F})+2\alpha)^2|Y|^2\langle\hat{Y}\otimes\hat{Y},\cdot\rangle \\
 4(\ii\nu(\xi+\ii\mathsf{F})+2\alpha)|Y|\langle\hat{Y}\otimes\hat{Y}\otimes\hat{Y},\cdot\rangle\\
 \langle\hat{Y}\otimes\hat{Y}\otimes\hat{Y}\otimes\hat{Y},\cdot\rangle
\end{array}
\right)^T\end{footnotesize}\\
\times e^{-\phi|Y|^2/2}\mathrm{d}|Y|\mathrm{d}\hat{Y}.
\end{split}
\]
We extend the integral in $|Y|$ to $\mathbb{R}$, replacing it by a variable $t$, and using that the integrand is invariant under the joint change of variables $t\rightarrow -t$ and $\hat{Y}\rightarrow-\hat{Y}$. This gives
\[
\begin{split}
\int_{\mathbb{S}^{n-2}}\int_{-\infty}^\infty e^{-\ii t\hat{Y}\cdot\eta}\begin{footnotesize}\left(
\begin{array}{c}
\ii^4\nu^4(\xi+\ii\mathsf{F})^4t^4\\
\ii^3\nu^3(\xi+\ii\mathsf{F})^3t^3\hat{Y}\\
\ii^2\nu^2(\xi+\ii\mathsf{F})^2t^2\hat{Y}\otimes\hat{Y}\\
\ii\nu(\xi+\ii\mathsf{F})t\hat{Y}\otimes\hat{Y}\otimes\hat{Y}\\
\hat{Y}\otimes\hat{Y}\otimes\hat{Y}\otimes\hat{Y}
\end{array}
\right)\otimes
\left(
\begin{array}{c}
(\ii\nu(\xi+\ii\mathsf{F})+2\alpha )^4t^4 \\
 4 (\ii\nu(\xi+\ii\mathsf{F})+2\alpha )^3t^3\langle\hat{Y},\cdot\rangle \\
  6 (\ii\nu(\xi+\ii\mathsf{F})+2\alpha )^2t^2\langle\hat{Y}\otimes\hat{Y},\cdot\rangle \\
 4(\ii\nu(\xi+\ii\mathsf{F})+2\alpha )t\langle\hat{Y}\otimes\hat{Y}\otimes\hat{Y},\cdot\rangle\\
 \langle\hat{Y}\otimes\hat{Y}\otimes\hat{Y}\otimes\hat{Y},\cdot\rangle
\end{array}
\right)^T\end{footnotesize}\\
\times e^{-\phi t^2/2}\mathrm{d}t\mathrm{d}\hat{Y}.
\end{split}
\]
Now the $t$ integral is a Fourier transform evaluated at $-\hat{Y}\cdot\eta$, under which multiplication by $t$ becomes $D_{\hat{Y}\cdot\eta}$. We also note that the Fourier transform of $e^{-\phi(\xi,\hat{Y})t^2/2}$ is a constant multiple of
\begin{equation}
\phi(\xi,\hat{Y})^{-1/2}e^{-(\hat{Y}\cdot\eta)^2/(2\phi(\xi,\hat{Y}))}.
\end{equation}
Thus we are left with
\[
\begin{split}
\int_{\mathbb{S}^{n-2}}\phi(\xi,\hat{Y})^{-1/2}\begin{footnotesize}\left(
\begin{array}{c}
\ii^4\nu^4(\xi+\ii\mathsf{F})^4D_{\hat{Y}\cdot\eta}^4\\
\ii^3\nu^3(\xi+\ii\mathsf{F})^3D_{\hat{Y}\cdot\eta}^3\hat{Y}\\
\ii^2\nu^2(\xi+\ii\mathsf{F})^2D_{\hat{Y}\cdot\eta}^2\hat{Y}\otimes\hat{Y}\\
\ii\nu(\xi+\ii\mathsf{F})D_{\hat{Y}\cdot\eta}\hat{Y}\otimes\hat{Y}\otimes\hat{Y}\\
\hat{Y}\otimes\hat{Y}\otimes\hat{Y}\otimes\hat{Y}
\end{array}
\right)\otimes
\left(
\begin{array}{c}
(\ii\nu(\xi+\ii\mathsf{F})+2\alpha )^4D_{\hat{Y}\cdot\eta}^4 \\
 4 (\ii\nu(\xi+\ii\mathsf{F})+2\alpha )^3\langle\hat{Y},\cdot\rangle D_{\hat{Y}\cdot\eta}^3 \\
  6 (\ii\nu(\xi+\ii\mathsf{F})+2\alpha )^2\langle\hat{Y}\otimes\hat{Y},\cdot\rangle D_{\hat{Y}\cdot\eta}^2 \\
 4(\ii\nu(\xi+\ii\mathsf{F})+2\alpha )\langle\hat{Y}\otimes\hat{Y}\otimes\hat{Y},\cdot\rangle D_{\hat{Y}\cdot\eta}\\
 \langle\hat{Y}\otimes\hat{Y}\otimes\hat{Y}\otimes\hat{Y},\cdot\rangle
\end{array}
\right)^T\end{footnotesize}\\
\times e^{-(\hat{Y}\cdot\eta)^2/(2\phi(\xi,\hat{Y}))}\mathrm{d}\hat{Y}.\\
=\int_{\mathbb{S}^{n-2}}\phi(\xi,\hat{Y})^{-1/2}\begin{footnotesize}\left(
\begin{array}{c}
\nu^4(\xi+\ii\mathsf{F})^4(\frac{\hat{Y}\cdot\eta}{\phi})^4\\
-\nu^3(\xi+\ii\mathsf{F})^3(\frac{\hat{Y}\cdot\eta}{\phi})^3\hat{Y}\\
\nu^2(\xi+\ii\mathsf{F})^2(\frac{\hat{Y}\cdot\eta}{\phi})^2\hat{Y}\otimes\hat{Y}\\
-\nu(\xi+\ii\mathsf{F})(\frac{\hat{Y}\cdot\eta}{\phi})\hat{Y}\otimes\hat{Y}\otimes\hat{Y}\\
\hat{Y}\otimes\hat{Y}\otimes\hat{Y}\otimes\hat{Y}
\end{array}
\right)\otimes
\left(
\begin{array}{c}
(\nu(\xi+\ii\mathsf{F})-2\ii\alpha )^4(\frac{\hat{Y}\cdot\eta}{\phi})^4 \\
-4 (\nu(\xi+\ii\mathsf{F})-2\ii\alpha )^3 (\frac{\hat{Y}\cdot\eta}{\phi})^3\langle\hat{Y},\cdot\rangle \\
  6 (\nu(\xi+\ii\mathsf{F})-2\ii\alpha )^2(\frac{\hat{Y}\cdot\eta}{\phi})^2 \langle\hat{Y}\otimes\hat{Y},\cdot\rangle \\
 -4(\nu(\xi+\ii\mathsf{F})-2\ii\alpha ) (\frac{\hat{Y}\cdot\eta}{\phi})\langle\hat{Y}\otimes\hat{Y}\otimes\hat{Y},\cdot\rangle\\
 \langle\hat{Y}\otimes\hat{Y}\otimes\hat{Y}\otimes\hat{Y},\cdot\rangle
\end{array}
\right)^T\end{footnotesize}\\
\times e^{-(\hat{Y}\cdot\eta)^2/(2\phi(\xi,\hat{Y}))}\mathrm{d}\hat{Y}.\\
\end{split}
\]

We note that
\begin{equation}\label{proj1}
\begin{footnotesize}\left(
\begin{array}{c}
\nu^4(\xi+\ii\mathsf{F})^4(\frac{\hat{Y}\cdot\eta}{\phi})^4\\
-\nu^3(\xi+\ii\mathsf{F})^3(\frac{\hat{Y}\cdot\eta}{\phi})^3\hat{Y}\\
\nu^2(\xi+\ii\mathsf{F})^2(\frac{\hat{Y}\cdot\eta}{\phi})^2\hat{Y}\otimes\hat{Y}\\
-\nu(\xi+\ii\mathsf{F})(\frac{\hat{Y}\cdot\eta}{\phi})\hat{Y}\otimes\hat{Y}\otimes\hat{Y}\\
\hat{Y}\otimes\hat{Y}\otimes\hat{Y}\otimes\hat{Y}
\end{array}
\right)\otimes
\left(
\begin{array}{c}
(\nu(\xi+\ii\mathsf{F})-2\ii\alpha )^4(\frac{\hat{Y}\cdot\eta}{\phi})^4 \\
-4 (\nu(\xi+\ii\mathsf{F})-2\ii\alpha )^3 (\frac{\hat{Y}\cdot\eta}{\phi})^3\langle\hat{Y},\cdot\rangle \\
  6 (\nu(\xi+\ii\mathsf{F})-2\ii\alpha )^2(\frac{\hat{Y}\cdot\eta}{\phi})^2 \langle\hat{Y}\otimes\hat{Y},\cdot\rangle \\
 -4(\nu(\xi+\ii\mathsf{F})-2\ii\alpha ) (\frac{\hat{Y}\cdot\eta}{\phi})\langle\hat{Y}\otimes\hat{Y}\otimes\hat{Y},\cdot\rangle\\
 \langle\hat{Y}\otimes\hat{Y}\otimes\hat{Y}\otimes\hat{Y},\cdot\rangle
\end{array}
\right)^T\end{footnotesize}
\end{equation}
is a multiple of a projection and is $(4,4)$-self-adjoint.
We let $\nu=\mathsf{F}^{-1}\alpha$, with
\[\nu(\xi+\ii\mathsf{F})-2\ii\alpha=\nu(\xi-\ii\mathsf{F}),\]
and
\[\phi=(\xi+\ii\mathsf{F})(\nu(\xi+\ii\mathsf{F})-2\ii\alpha)=\nu(\xi^2+\mathsf{F}^2).\]
We then denote
\[
\begin{split}
&\mathfrak{C}_4=\nu^4(\xi+\ii\mathsf{F})-2\ii\alpha )^4(\frac{\hat{Y}\cdot\eta}{\phi})^4=\nu^4(\xi-\ii\mathsf{F})^4(\frac{\hat{Y}\cdot\eta}{\phi})^4,\\
&\mathfrak{C}_3=-\nu^3(\xi+\ii\mathsf{F})-2\ii\alpha )^3(\frac{\hat{Y}\cdot\eta}{\phi})^3=-\nu^3(\xi-\ii\mathsf{F})^3(\frac{\hat{Y}\cdot\eta}{\phi})^3,\\
&\mathfrak{C}_2=\nu^2(\xi+\ii\mathsf{F})-2\ii\alpha )^2(\frac{\hat{Y}\cdot\eta}{\phi})^2=\nu^2(\xi-\ii\mathsf{F})^2(\frac{\hat{Y}\cdot\eta}{\phi})^2,\\
&\mathfrak{C}_1=-\nu(\xi+\ii\mathsf{F})-2\ii\alpha)(\frac{\hat{Y}\cdot\eta}{\phi})=-\nu(\xi-\ii\mathsf{F})(\frac{\hat{Y}\cdot\eta}{\phi}).\\
\end{split}
\]

For a symmetric 4-tensor of the form (\ref{form}) in the kernel of the principal symbol of $\delta_\mathsf{F}^s$, we have by Lemma \ref{dd} that
\begin{equation}\label{identities1m}
\begin{split}
&(\xi-\ii\mathsf{F}) f_{xxxx}+\langle\eta,f_{xxxy}\rangle+6\langle a^\flat,f_{xxyy}\rangle=0,\\
&(\xi-\ii\mathsf{F}) f_{xxxy}+\langle\eta,f_{xxyy}\rangle+4\langle b^\flat,f_{xyyy}\rangle=0,\\
&(\xi-\ii\mathsf{F}) f_{xxyy}+\langle\eta,f_{xyyy}\rangle+\langle c^\flat,f_{yyyy}\rangle=0,\\
&(\xi-\ii\mathsf{F}) f_{xyyy}+\langle\eta,f_{yyyy}\rangle=0.
\end{split}
\end{equation}
Moreover, $f$ is in the kernel of (\ref{proj1}) if and only if
\begin{equation}\label{identities2m}
\begin{split}
\mathfrak{C}_4f_{xxxx}&+4\mathfrak{C}_3\langle\hat{Y},f_{xxxy}\rangle+6\mathfrak{C}_2\langle\hat{Y}\otimes\hat{Y},f_{xxyy}\rangle\\
&+4\mathfrak{C}_1\langle\hat{Y}\otimes\hat{Y}\otimes\hat{Y},f_{xyyy}\rangle+\langle\hat{Y}\otimes\hat{Y}\otimes\hat{Y}\otimes\hat{Y},f_{yyyy}\rangle=0.
\end{split}
\end{equation}

We now take a semiclassical point of viewm setting $h=\mathsf{F}^{-1}$ and rescaling
\[\xi_\mathsf{F}=\mathsf{F}^{-1}\xi,~~\eta_\mathsf{F}=\mathsf{F}^{-1}\eta.\]
Using these semiclassical variables, we calculate, successively,
\begin{equation*}
\begin{split}
&f_{xyyy}=-(\xi_\mathsf{F}-\ii)^{-1}\langle\eta_\mathsf{F},f_{yyyy}\rangle,\\
&\langle\hat{Y}\otimes\hat{Y}\otimes\hat{Y},f_{xyyy}\rangle=-(\xi_\mathsf{F}-\ii)^{-1}\langle\eta_\mathsf{F}\otimes\hat{Y}\otimes\hat{Y}\otimes\hat{Y},f_{yyyy}\rangle,\\
&f_{xxyy}=-(\xi_\mathsf{F}-\ii)^{-1}\langle\eta_\mathsf{F},f_{xyyy}\rangle+O(h)=(\xi_\mathsf{F}-\ii)^{-2}\langle\eta_\mathsf{F}\otimes\eta_\mathsf{F},f_{yyyy}\rangle+O(h),\\
&\langle\hat{Y}\otimes\hat{Y},f_{xxyy}\rangle=(\xi_\mathsf{F}-\ii)^{-2}\langle\eta_\mathsf{F}\otimes\eta_\mathsf{F}\otimes\hat{Y}\otimes\hat{Y},f_{yyyy}\rangle+O(h),\\
&f_{xxxy}=-(\xi_\mathsf{F}-\ii)^{-1}\langle\eta_\mathsf{F},f_{xxyy}\rangle+O(h)=-(\xi_\mathsf{F}-\ii)^{-3}\langle\eta_\mathsf{F}\otimes\eta_\mathsf{F}\otimes\eta_\mathsf{F},f_{yyyy}\rangle+O(h),\\
&\langle\hat{Y},f_{xxxy}\rangle=-(\xi_\mathsf{F}-\ii)^{-3}\langle\eta_\mathsf{F}\otimes\eta_\mathsf{F}\otimes\eta_\mathsf{F}\otimes\hat{Y},f_{yyyy}\rangle+O(h),\\
&f_{xxxx}=-(\xi_\mathsf{F}-\ii)^{-1}\langle\eta_\mathsf{F},f_{xxxy}\rangle+O(h)=(\xi_\mathsf{F}-\ii)^{-4}\langle\eta_\mathsf{F}\otimes\eta_\mathsf{F}\otimes\eta_\mathsf{F}\otimes\eta_\mathsf{F},f_{yyyy}\rangle.
\end{split}
\end{equation*}
we observe that
\begin{eqnarray*}
\mathfrak{C}_j=(-1)^{j}(\xi_\mathsf{F}^2+1)^{-j}(\xi_\mathsf{F}-\ii)^{j}\rho^j,~~~\text{with}~\rho=\hat{Y}\cdot\eta_\mathsf{F}.
\end{eqnarray*}
By calculation, and letting $h\rightarrow 0$, we have by (\ref{identities1m}) that
\[
\langle \otimes_{i=1}^4((\xi_\mathsf{F}^2+1)^{-1}\rho\eta_\mathsf{F}+\hat{Y}),f_{yyyy}\rangle=0.
\]
If $\eta=0$, then
\[
\langle \hat{Y}\otimes\hat{Y}\otimes\hat{Y}\otimes\hat{Y},f_{yyyy}\rangle=0.
\]
Since $\hat{Y}\otimes\hat{Y}\otimes\hat{Y}\otimes\hat{Y}$ span the space of all symmetric 4-tensors, we conclude that $f_{yyyy}=0$ and thus $f=0$.

If $\eta_\mathsf{F}\neq 0$, we take $\hat{Y}=\epsilon\hat{\eta}_{\mathsf{F}}+(1-\epsilon^2)^{1/2}\hat{Y}^\perp$, where $\hat{Y}^\perp$ is orthogonal to $\hat{\eta}_{\mathsf{F}}$. Then by (\ref{identities1m}), we have 
\begin{equation}\label{identities4m}
\begin{split}
\Big{\langle}&
\epsilon^4\left(1+\frac{|\eta_\mathsf{F}|^2}{\xi^2_\mathsf{F}+1}\right)^4 \hat{\eta}_\mathsf{F} \otimes\hat{\eta}_\mathsf{F}\otimes \hat{\eta}_\mathsf{F}\otimes \hat{\eta}_\mathsf{F}\\
&+4\epsilon^3(1-\epsilon^2)^{1/2}\left(1+\frac{|\eta_\mathsf{F}|^2}{\xi^2_\mathsf{F}+1}\right)^3\hat{\eta}_\mathsf{F}\otimes \hat{\eta}_\mathsf{F}\otimes\hat{\eta}_\mathsf{F}\otimes \hat{Y}^\perp\\
&+6\epsilon^2(1-\epsilon^2)\left(1+\frac{|\eta_\mathsf{F}|^2}{\xi^2_\mathsf{F}+1}\right)^2\hat{\eta}_\mathsf{F}\otimes \hat{\eta}_\mathsf{F}\otimes \hat{Y}^\perp\otimes \hat{Y}^\perp\\
&+4\epsilon(1-\epsilon^2)^{3/2}\left(1+\frac{|\eta_\mathsf{F}|^2}{\xi^2_\mathsf{F}+1}\right)\hat{\eta}_\mathsf{F}\otimes  \hat{Y}^\perp\otimes \hat{Y}^\perp\otimes \hat{Y}^\perp\\
&+(1-\epsilon^2)^2\hat{Y}^\perp\otimes  \hat{Y}^\perp\otimes \hat{Y}^\perp\otimes \hat{Y}^\perp,f_{yyyy}
\Big{\rangle}=0.
\end{split}
\end{equation}
Similar to the proof of Lemma \ref{ellipticity1}, we take derivatives of $(\ref{identities4m})$ up to order four at $\epsilon=0$; it follows that $f_{yyyy}$ is orthogonal to
\[\begin{split}
\hat{Y}^\perp\otimes\hat{Y}^\perp\otimes\hat{Y}^\perp\otimes\hat{Y}^\perp,~~~~~\hat{\eta}_\mathsf{F}\otimes\hat{Y}^\perp\otimes\hat{Y}^\perp\otimes\hat{Y}^\perp,~~~~~~\hat{\eta}_\mathsf{F}\otimes\hat{\eta}_\mathsf{F}\otimes\hat{Y}^\perp\otimes\hat{Y}^\perp,\\
\hat{\eta}_\mathsf{F}\otimes\hat{\eta}_\mathsf{F}\otimes\hat{\eta}_\mathsf{F}\otimes\hat{Y}^\perp,~~~~~~~\hat{\eta}_\mathsf{F}\otimes\hat{\eta}_\mathsf{F}\otimes\hat{\eta}_\mathsf{F}\otimes\hat{\eta}_\mathsf{F}.
\end{split}
\]
Then, $f=0$.

We conclude that for sufficiently large $\mathsf{F}>0$, one has
ellipticity at all finite points. \end{proof}

With Lemma \ref{ellipticity1} and Lemma \ref{ellipticity2}, we obtain
the following proposition by similar arguments as in the proof of
\cite[Proposition 3.3]{SUV2},

\begin{proposition}
There exists $\mathsf{F}_0>0$ such that for $\mathsf{F}>\mathsf{F}_0$ the following holds. Given $\tilde{\Omega}$, a neighborhood of $X\cap M=\{x\geq 0,\rho>0\}$ in $X$; for a suitable choice of the cutoff $\chi\in C_c^\infty(\mathbb{R})$ and of $M\in\Psi_{sc}^{-3,0}(X;\mathrm{Sym}^{3}{}^{sc}T^*X,\mathrm{Sym}^{3}{}^{sc}T^*X)$, the operator
\[A_\mathsf{F}=N_\mathsf{F}+\mathrm{d}^s_\mathsf{F}M\delta^s_\mathsf{F},~~~N_\mathsf{F}=e^{-\mathsf{F}/x}LI_4e^{\mathsf{F}/x},~~\mathrm{d}^s_\mathsf{F}=e^{-\mathsf{F}/x}\mathrm{d}^se^{\mathsf{F}/x},\]
is elliptic in $\Psi_{sc}^{-1,0}(X;\mathrm{Sym}^{4}{}^{sc}T^*X,\mathrm{Sym}^{4}{}^{sc}T^*X)$ in $\tilde{\Omega}$.
\end{proposition}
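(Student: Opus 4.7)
The plan is to promote the partial ellipticity from Lemmas \ref{ellipticity1} and \ref{ellipticity2} to full ellipticity of $A_\mathsf{F}$, following the template of \cite[Proposition 3.3]{SUV2}. The operator $\mathrm{d}^s_\mathsf{F} M \delta^s_\mathsf{F}$ is a \emph{gauge correction}: it vanishes on the kernel of the principal symbol $\sigma(\delta^s_\mathsf{F})$, where Lemmas \ref{ellipticity1} and \ref{ellipticity2} already give ellipticity of $\sigma(N_\mathsf{F})$; one then chooses $M$ so that the gauge term supplies ellipticity on a complementary subspace.

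For the construction of $M$, I would first observe that Lemma \ref{dd} exhibits $\sigma(\delta^s_\mathsf{F}\mathrm{d}^s_\mathsf{F})$ on $\mathrm{Sym}^3{}^{sc}T^*X$ as invertible for $\mathsf{F}>0$, since its structure is built from the nonvanishing quantity $|\xi|^2+\mathsf{F}^2$ on the diagonal. One then takes the principal symbol of $M$ to be, modulo lower order, a positive elliptic scalar scattering symbol of order $-3$ composed with $\sigma(\delta^s_\mathsf{F}\mathrm{d}^s_\mathsf{F})^{-1}$, so that $M\in\Psi_{sc}^{-3,0}$. With this choice, on $\mathrm{range}\,\sigma(\mathrm{d}^s_\mathsf{F})$ the symbol $\sigma(\mathrm{d}^s_\mathsf{F} M\delta^s_\mathsf{F})$ acts as a positive scalar times the projection onto $\mathrm{range}\,\sigma(\mathrm{d}^s_\mathsf{F})$ along $\ker\sigma(\delta^s_\mathsf{F})$, and the composition lies in $\Psi_{sc}^{-1,0}$ so that it matches the order of $N_\mathsf{F}$.

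To establish ellipticity of $\sigma(A_\mathsf{F})=\sigma(N_\mathsf{F})+\sigma(\mathrm{d}^s_\mathsf{F} M\delta^s_\mathsf{F})$, suppose $\sigma(A_\mathsf{F})v=0$ at a point of ${}^{sc}T^*X$ over $\tilde{\Omega}$. One exploits the principal-symbol identity $\sigma(\delta^s_\mathsf{F})\sigma(N_\mathsf{F})=0$, dual to the fact that the ray transform annihilates symmetric differentials ($I_4\mathrm{d}^s=0$ for potentials vanishing at the boundary), which propagates through $L$ and the exponential conjugation to the leading order in the scattering calculus as in \cite{SUV2}. Applying $\sigma(\delta^s_\mathsf{F})$ to $\sigma(A_\mathsf{F})v=0$ reduces to $\sigma(\delta^s_\mathsf{F}\mathrm{d}^s_\mathsf{F})\sigma(M)\sigma(\delta^s_\mathsf{F})v=0$; invertibility of $\sigma(\delta^s_\mathsf{F}\mathrm{d}^s_\mathsf{F})$ and of $\sigma(M)$ forces $\sigma(\delta^s_\mathsf{F})v=0$. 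Thus $v\in\ker\sigma(\delta^s_\mathsf{F})$, where the gauge term vanishes, and the equation collapses to $\sigma(N_\mathsf{F})v=0$. Lemmas \ref{ellipticity1} and \ref{ellipticity2} then give $v=0$ — at fiber infinity for any $\mathsf{F}>0$, and at finite ${}^{sc}T^*X$ points for $\mathsf{F}>\mathsf{F}_0$.

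The main obstacle will be verifying rigorously, in the scattering calculus, that $\sigma(\delta^s_\mathsf{F})\sigma(N_\mathsf{F})=0$ at the leading order including at the front face $x=0$ and at fiber infinity. This requires returning to the explicit Schwartz kernel (\ref{kernel}) and its boundary behavior, and showing that the relation $I_4\mathrm{d}^s=0$ persists at the principal symbol level after the weighting by $L$ and conjugation by $e^{\mathsf{F}/x}$. One also needs to choose the cutoff $\chi\in C_c^\infty(\mathbb{R})$ so that all the integrals appearing in $L$ converge and produce nonvanishing symbols at finite points (the Gaussian choice $\chi(s)=e^{-s^2/(2\nu(\hat{Y}))}$ used in Lemma \ref{ellipticity2} being the prototype), and possibly enlarge $\mathsf{F}_0$ so that lower-order couplings between $\ker\sigma(\delta^s_\mathsf{F})$ and $\mathrm{range}\,\sigma(\mathrm{d}^s_\mathsf{F})$ do not destroy ellipticity. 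Once these points are in place, the argument of \cite[Proposition 3.3]{SUV2} transfers verbatim.
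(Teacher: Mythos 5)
Your proposal is correct in outline and follows the same overall architecture as the argument the paper invokes (it simply defers to the proof of \cite[Proposition 3.3]{SUV2}): add a gauge term $\mathrm{d}^s_\mathsf{F}M\delta^s_\mathsf{F}$ of matching order $(-1,0)$, show that a null vector $v$ of $\sigma(A_\mathsf{F})$ must satisfy $\sigma(\delta^s_\mathsf{F})v=0$ and $\sigma(N_\mathsf{F})v=0$, and then invoke Lemmas \ref{ellipticity1} and \ref{ellipticity2} at fiber infinity and at finite points respectively. Where you genuinely differ is the mechanism of that reduction. The argument of \cite{SUV2} uses positivity: $\sigma(N_\mathsf{F})$ is positive semi-definite with respect to the inner products (\ref{inner})--(\ref{inner1}) --- this is precisely what the remarks that (\ref{proj}) and (\ref{proj1}) are $(4,4)$-self-adjoint multiples of projections are recording --- and $\sigma(\mathrm{d}^s_\mathsf{F}M\delta^s_\mathsf{F})$ is positive semi-definite as soon as $\sigma(M)>0$; pairing $\sigma(A_\mathsf{F})v=0$ with $v$ forces both terms to vanish separately, with no need for any relation between $\sigma(N_\mathsf{F})$ and $\sigma(\delta^s_\mathsf{F})$. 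You instead apply $\sigma(\delta^s_\mathsf{F})$ on the left and use the intertwining identity $\sigma(\delta^s_\mathsf{F})\sigma(N_\mathsf{F})=0$ together with invertibility of $\sigma(\Delta_{\mathsf{F},s})$ and of $\sigma(M)$. That identity is true (it is the principal-symbol shadow of the fact that $I_4\mathrm{d}^s$ produces only endpoint terms, combined with the self-adjointness of the kernel; one can check it directly since $\mu(\xi+\ii\mathsf{F})+\hat{Y}\cdot\eta=0$ for $\mu=\mathfrak{C}_1$ with $\phi=\nu(\xi^2+\mathsf{F}^2)$), and you rightly flag its verification as the main outstanding work --- but it is exactly the piece of information the positivity route avoids having to establish, which is why the paper's computations are organized around self-adjointness and rank-one structure. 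Two smaller corrections: the invertibility of $\sigma(\delta^s_\mathsf{F}\mathrm{d}^s_\mathsf{F})$ on symmetric $3$-tensors is not contained in Lemma \ref{dd} (which computes $\mathrm{d}^s_\mathsf{F}\delta^s_\mathsf{F}$ on $4$-tensors), and having $|\xi|^2+\mathsf{F}^2$ on the diagonal does not by itself give invertibility; the correct reference is Lemma \ref{dd1}, which because of the curvature terms $a^\flat,b^\flat,c^\flat$ requires $\mathsf{F}$ large rather than merely positive. This is harmless since the proposition permits enlarging $\mathsf{F}_0$, but your construction of $\sigma(M)$ depends on it; note also that for the positivity route any positive elliptic $M\in\Psi_{sc}^{-3,0}$ suffices, so the specific choice built from $\sigma(\Delta_{\mathsf{F},s})^{-1}$ is an unnecessary (though not incorrect) complication.
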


\section{Proofs of the main results}\label{mainproof}

We prove the injectivity of $I_4$ with the gauge condition
$\delta_\mathsf{F}^sf_\mathsf{F}=0$ in $\Omega=\Omega_c$, where
$f_\mathsf{F}=e^{-\mathsf{F}/x}f$. Based on the discussion in
\cite[Section 4]{SUV2}, we first need to check the invertibility of
$\Delta_{\mathsf{F},s}$. Here
$\Delta_{\mathsf{F},s}=\delta_\mathsf{F}^s\mathrm{d}_\mathsf{F}^s$ is
the `solenoidal Witten Laplacian' which we will show to be invertible
with the desired boundary condition. The similar results for $I_1$ and
$I_2$ are provided in Section 4 of \cite{SUV2}.

\begin{lemma}\label{dd1}
There exists $\mathsf{F}_0>0$ such that for $\mathsf{F}\geq\mathsf{F}_0$ the operator $\Delta_{F}^s=\delta^s_\mathsf{F}\mathrm{d}^s_\mathsf{F}$ is (joint) elliptic in $\mathrm{Diff}_{sc}^{2,0}(X;\mathrm{Sym}^{3}{}^{sc}T^*X,\mathrm{Sym}^{3}{}^{sc}T^*X)$ on symmetric $3$-tensors. In fact, on symmetric $3$-tensors
\begin{equation}\label{fact1}
\delta^s_\mathsf{F}\mathrm{d}^s_\mathsf{F}=\frac{1}{4}\nabla_\mathsf{F}^*\nabla_\mathsf{F}+\frac{3}{4}\mathrm{d}^s_\mathsf{F}\delta^s_\mathsf{F}+A+R,
\end{equation}
where $R\in x\mathrm{Diff}^1_{sc}(X;\mathrm{Sym}^{3}{}^{sc}T^*X,\mathrm{Sym}^{3}{}^{sc}T^*X)$, $A\in \mathrm{Diff}^1_{sc}(X;\mathrm{Sym}^{3}{}^{sc}T^*X,\mathrm{Sym}^{3}{}^{sc}T^*X)$ and $\nabla_\mathsf{F}=e^{-\mathsf{F}/x}\nabla e^{\mathsf{F}/x}$, with $\nabla$ gradient relative to $g_{sc}$ (not $g$), $\mathrm{d}_\mathsf{F}=e^{-\mathsf{F}/x}\mathrm{d} e^{\mathsf{F}/x}$ the exterior derivative on symmetric $3$-tensors, while $\delta_\mathsf{F}$ is its adjoint on symmetric $3$-tensors.
\end{lemma}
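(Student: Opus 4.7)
The plan is to establish the operator identity (\ref{fact1}) as a Weitzenb\"ock-type decomposition at the principal symbol level, and then to deduce joint ellipticity from the positivity of each piece on the right-hand side. The symbolic identity takes the form $\sigma_2(\delta^s\mathrm{d}^s) = \tfrac{1}{m+1}\sigma_2(\nabla^*\nabla) + \tfrac{m}{m+1}\sigma_2(\mathrm{d}^s\delta^s)$ on symmetric $m$-tensors. This follows from a direct computation using that the principal symbol of $\mathrm{d}^s$ on a symmetric $k$-tensor is $\tfrac{\ii}{k+1}\mathscr{S}(\zeta\otimes\cdot)$ and that of $\delta^s$ is $-\ii\,\iota_\zeta(\cdot)$; composing them yields $\sigma_2(\delta^s\mathrm{d}^s)(u) = \tfrac{1}{m+1}|\zeta|^2_{g_{sc}}u + \tfrac{m}{m+1}\mathscr{S}(\zeta\otimes\iota_\zeta u)$, with the second summand being $\sigma_2(\mathrm{d}^s\delta^s)(u)$. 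For $m=3$ the coefficients become $\tfrac14$ and $\tfrac34$ as in (\ref{fact1}), and the verification can be organized as a block-matrix computation in the basis of symmetric $3$-tensors adapted to the scattering splitting $\tfrac{\mathrm{d}x}{x^2}, \tfrac{\mathrm{d}y^i}{x}$, parallel to Lemma \ref{dd} but one rank lower, using $M(3)$ from (\ref{inner1}) for the inner product.

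To upgrade to the operator identity, I would account for the subprincipal terms. Curvature and Christoffel symbol contributions (the analogues of the $a^\flat, b^\flat, c^\flat$ corrections of Lemma \ref{dd}), together with the discrepancy between the scattering- and Riemannian-metric adjoints, produce an error that splits naturally into $A \in \mathrm{Diff}^1_{sc}$ (pieces with nontrivial symbol at $x=0$) and $R \in x\mathrm{Diff}^1_{sc}$ (pieces carrying an extra $x$ vanishing factor). The conjugation by $e^{-\mathsf{F}/x}$ commutes cleanly with the identity because each operator appearing $\mathsf{F}$-conjugates by the same rule: replace the $\tfrac{\mathrm{d}x}{x^2}$-component symbol $\xi$ by $\xi + \ii\mathsf{F}$, so shifting both sides of the symbolic identity by the same rule produces the $\mathsf{F}$-conjugated identity.

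Joint ellipticity then follows from positivity. The principal scattering symbol of $\tfrac{1}{4}\nabla_\mathsf{F}^*\nabla_\mathsf{F} + \tfrac{3}{4}\mathrm{d}_\mathsf{F}^s\delta_\mathsf{F}^s$ is bounded below by $\tfrac14(\xi^2+\mathsf{F}^2+|\eta|^2)\mathrm{Id}$: the term $\mathrm{d}_\mathsf{F}^s\delta_\mathsf{F}^s$ is positive semi-definite as a square, while the $\mathsf{F}$-conjugated Bochner Laplacian $\nabla_\mathsf{F}^*\nabla_\mathsf{F}$ has principal scattering symbol $(\xi^2+\mathsf{F}^2+|\eta|^2)\mathrm{Id}$. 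This lower bound is strictly positive at fiber infinity (where $\xi^2+|\eta|^2\to\infty$) and at the scattering face $x=0$ (where the $\mathsf{F}^2$-floor dominates for $\mathsf{F}>0$). The error $A$ is of strictly lower scattering order, so choosing $\mathsf{F}_0$ larger than a uniform bound on the scattering symbol of $A$ on the scattering cosphere bundle lets the $\tfrac14\mathsf{F}^2$-floor absorb it; $R$ vanishes at $x=0$ and contributes nothing to ellipticity at the scattering face. The main obstacle lies in the bookkeeping for the symbolic identity: ensuring that, across all blocks of the symmetric $3$-tensor basis, the $\mathscr{S}(\eta\otimes\cdot)$-pieces, the Christoffel-type corrections, and the $M(3)$-normalizations combine to produce exactly the coefficients $\tfrac14$ and $\tfrac34$, with no spurious residue escaping into the principal part.
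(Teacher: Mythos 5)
Your proposal is correct and follows essentially the same route as the paper: establish the decomposition (\ref{fact1}) at the principal symbol level, observe that $\tfrac14\nabla^*_\mathsf{F}\nabla_\mathsf{F}+\tfrac34\mathrm{d}^s_\mathsf{F}\delta^s_\mathsf{F}$ has symbol bounded below by $\tfrac14(\xi^2+\mathsf{F}^2+|\eta|^2)\mathrm{Id}$, and absorb the first-order correction $A$ by taking $\mathsf{F}$ large. The only difference is cosmetic: you verify the key symbolic identity via the general contraction formula $\iota_\zeta\mathscr{S}(\zeta\otimes u)=\tfrac{1}{m+1}|\zeta|^2u+\tfrac{m}{m+1}\mathscr{S}(\zeta\otimes\iota_\zeta u)$, whereas the paper carries out the same verification by explicit block-matrix computation of the symbols of $\delta^s_\mathsf{F}\mathrm{d}^s_\mathsf{F}$, $\nabla^*_\mathsf{F}\nabla_\mathsf{F}$ and $\mathrm{d}^s_\mathsf{F}\delta^s_\mathsf{F}$ in the adapted basis with the weights $M(3)$, $M(4)$.
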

\begin{proof}
By calculation and Lemma \ref{dd}, $\Delta_\mathsf{F}^s$ has symbol
\[
\begin{split}
\left(
\begin{array}{cccc}
\xi^2+\mathsf{F}^2+\frac{1}{4}|\eta|^2& \frac{3}{4}(\xi+\ii\mathsf{F})\iota_\eta&  0&0\\
\frac{1}{4}(\xi-\ii\mathsf{F})\eta\otimes & \frac{3}{4}(\xi^2+\mathsf{F}^2)+\frac{1}{2}\iota_\eta^s\eta\otimes_s& \frac{1}{2}(\xi+\ii\mathsf{F})\iota^s_\eta &0 \\
0 & \frac{1}{2}(\xi-\ii\mathsf{F})\eta\otimes_s &\frac{1}{2}(\xi^2+\mathsf{F}^2)+\frac{3}{4}\iota_\eta^s\eta\otimes_s &\frac{1}{4}(\xi+\ii\mathsf{F})\iota^s_\eta\\
0 & 0&\frac{3}{4}(\xi-\ii\mathsf{F})\eta\otimes_s &\frac{1}{4}(\xi^2+\mathsf{F}^2)+\iota^s_\eta\eta\otimes_s
\end{array}
\right)\\
+\left(
\begin{array}{cccc}
6\langle a^\flat,\cdot\rangle a^\flat&3\langle a,\cdot\rangle\eta\otimes_s & 3(\xi+\ii\mathsf{F})\langle a^\flat,\cdot\rangle &0\\
\iota^s_\eta a & \frac{4}{3}\langle b^\flat,\cdot\rangle b^\flat & \langle b^\flat,\cdot\rangle\eta\otimes_s & \frac{1}{3}(\xi+\ii\mathsf{F})\langle b^\flat,\cdot\rangle\\
(\xi-\ii \mathsf{F})a^\flat & \iota_\eta^s b^\flat &\langle c^\flat,\cdot\rangle c^\flat &\frac{1}{3}\langle c^\flat,\cdot\rangle\eta\otimes_s\\
0 & (\xi-\ii\mathsf{F})b^\flat &\iota^s_\eta c^\flat &0
\end{array}
\right).\\
\end{split}
\]
Here, $\iota_\eta^s\eta\otimes_s$ at $(2,2)$-block has the $(i_1',i_2')$-entry
\[\frac{1}{2}(|\eta|^2\delta_{i_1',i_2'}+\eta_{i_1'}\eta_{i_2'}).\]
$\iota_\eta^s\eta\otimes_s$ at $(3,3)$-block has $(i_1',j_1',i_2',j_2')$-entry
\[
\begin{split}
\frac{1}{6}(|\eta|^2\delta_{i_1',i_2'}\delta_{j_1',j_2'}+|\eta|^2\delta_{i_1',j_2'}\delta_{j_1',i_2'}+\eta_{i_1'}\eta_{i_2'}\delta_{j_1'j_2'}+\eta_{i_1'}\eta_{j_2'}\delta_{i_1'j_2'}+\eta_{i_2'}\eta_{j_1'}\delta_{i_1'j_2'}+\eta_{j_1'}\eta_{j_2'}\delta_{i_1'i_2'})
\end{split}
\]
and $\iota_\eta^s\eta\otimes_s$ at $(4,4)$-block has $(i_1',j_1',k_1',i_2',j_2',k_2')$-entry
\[
\begin{split}
\frac{1}{24}\Big(
&|\eta|^2(\delta_{i_1'i_2'}\delta_{j_1'j_2'}\delta_{k_1'k_2'}+\delta_{i_1'j_2'}\delta_{j_1'i_2'}\delta_{k_1'k_2'}+\delta_{i_1'k_2'}\delta_{j_1'j_2'}\delta_{k_1'i_2'}\\
&\quad\quad\quad+\delta_{i_1'i_2'}\delta_{j_1'k_2'}\delta_{k_1'j_2'}+\delta_{i_1'j_2'}\delta_{j_1'k_2'}\delta_{k_1'i_2'}+\delta_{i_1'k_2'}\delta_{j_1'i_2'}\delta_{k_1'j_2'})\\
&+\eta_{i_1'}\eta_{i_2'}(\delta_{j_1'j_2'}\delta_{k_1'k_2'}+\delta_{j_1'k_2'}\delta_{k_1'j_2'})+\eta_{i_1'}\eta_{j_2'}(\delta_{j_1'i_2'}\delta_{k_1'k_2'}+\delta_{j_1'k_2'}\delta_{i_1'k_2'})\\
&+\eta_{i_1'}\eta_{k_2'}(\delta_{j_1'j_2'}\delta_{k_1'i_2'}+\delta_{j_1'i_2'}\delta_{k_1'j_2'})+\eta_{j_1'}\eta_{k_2'}(\delta_{i_1'j_2'}\delta_{k_1'i_2'}+\delta_{i_1'i_2'}\delta_{k_1'j_2'})\\
&+\eta_{j_1'}\eta_{j_2'}(\delta_{i_1'i_2'}\delta_{k_1'k_2'}+\delta_{i_1'k_2'}\delta_{i_1'k_2'})+\eta_{k_1'}\eta_{k_2'}(\delta_{i_1'i_2'}\delta_{j_1'j_2'}+\delta_{i_1'j_2'}\delta_{j_1'i_2'})\\
&+\eta_{j_1'}\eta_{i_2'}(\delta_{i_1'j_2'}\delta_{k_1'k_2'}+\delta_{i_1'k_2'}\delta_{j_1'k_2'})+\eta_{k_1'}\eta_{i_2'}(\delta_{j_1'j_2'}\delta_{i_1'k_2'}+\delta_{j_1'k_2'}\delta_{i_1'j_2'})\\
&+\eta_{j_1'}\eta_{j_2'}(\delta_{i_1'k_2'}\delta_{j_1'i_2'}+\delta_{i_1'i_2'}\delta_{j_1'k_2'})
\Big) .
\end{split}
\]
We note that the gradient $\nabla$ maps a symmetric 3-tensor to a (not
symmetric) 4-tensor.

We introduce some further notation. We let $A$ be
a matrix of blocks, with
\[A_{\downarrow\times k}\]
representing
\[
\left.\left(
\begin{array}{c}
A\\
\vdots\\
A
\end{array}
\right)\right\}k-\mathrm{tuple}.
\]
Also, we write
\[A_{\rightarrow\times k}\]
representing
\[
\left(
\begin{array}{ccc}
A & \cdots&A
\end{array}
\right).
\]
Then we use the basis for 4-tensors (not the symmetric ones) and symmetric 3-tensors, under which the principal symbol of $\nabla_\mathsf{F}$ relative to $g_{sc}$ (not $g$) is
\[
\left(
\begin{array}{cccc}
\left(
\begin{array}{c}
\xi+\ii\mathsf{F}\\
\eta\otimes
\end{array}
\right)
&&&\\

& \left(
\begin{array}{c}
\xi+\ii\mathsf{F}\\
\eta\otimes
\end{array}
\right)_{\downarrow\times 3} &&\\
 
 &&\left(
\begin{array}{c}
\xi+\ii\mathsf{F}\\
\eta\otimes
\end{array}
\right)_{\downarrow\times 3}&\\
&&&\left(
\begin{array}{c}
\xi+\ii\mathsf{F}\\
\eta\otimes
\end{array}
\right)

 \end{array}
 \right) .
\]
The number of rows is 16. Thus $\nabla_\mathsf{F}^*$ has principal symbol,
\[
\left(
\begin{array}{cccc}
(
\begin{array}{cc}
\xi-\ii\mathsf{F} & \iota_\eta
\end{array}
)&&&\\
&
\frac{1}{3}(
\begin{array}{cc}
\xi-\ii\mathsf{F} & \iota_\eta
\end{array}
)_{\rightarrow\times 3}
&&\\
&&
\frac{1}{3}(
\begin{array}{cc}
\xi-\ii\mathsf{F} & \iota_\eta
\end{array}
)_{\rightarrow\times 3}
&\\
&&&
(
\begin{array}{cc}
\xi-\ii\mathsf{F} & \iota_\eta
\end{array}
)
\end{array}
\right).
\]
Then $\nabla_\mathsf{F}^*\nabla_\mathsf{F}$ has symbol
\begin{equation}\label{FFsymbol}
\left(
\begin{array}{cccc}
\xi^2+\mathsf{F}^2+|\eta|^2& 0&  0&0\\
0&\xi^2+\mathsf{F}^2+|\eta|^2& 0 & 0 \\
0 & 0 &\xi^2+\mathsf{F}^2+|\eta|^2&0\\
0 & 0 & 0 &\xi^2+\mathsf{F}^2+|\eta|^2
\end{array}
\right).
\end{equation}

Similar to our calculation in the proof of Lemma \ref{dd}, we get the principal symbol of $\mathrm{d}^s_\mathsf{F}\delta^s_\mathsf{F}$ on symmetric 3-tensors,
\[
\begin{split}
\left(
\begin{array}{cccc}
\xi^2+\mathsf{F}^2 & (\xi+\ii\mathsf{F})\iota_\eta & 0 & 0\\
\frac{1}{3}(\xi-\ii\mathsf{F})\eta\otimes &\frac{2}{3}(\xi^2+\mathsf{F}^2)+\frac{1}{3}\eta\otimes\iota_\eta & \frac{2}{3}(\xi+\ii\mathsf{F})\iota_\eta^s & 0\\
0 &\frac{2}{3}(\xi-\ii\mathsf{F})\eta\otimes_s & \frac{1}{3}(\xi+\mathsf{F}^2)+\frac{2}{3}\eta\otimes_s\iota_\eta & \frac{1}{3}(\xi+\mathsf{F})\iota_\eta^s\\
0 & 0 & (\xi-\ii\mathsf{F})\eta\otimes_s & \eta\otimes_s\iota_\eta
\end{array}
\right)\\
+
\left(
\begin{array}{cccc}
0 & 0 & 3(\xi+\ii\mathsf{F})\langle d^\flat, \cdot\rangle & 0\\
0 & 0 & \eta\langle d^\flat,\cdot\rangle &\frac{1}{3}(\xi+\ii\mathsf{F})\langle e^\flat,\cdot\rangle\\
(\xi-\ii\mathsf{F})d^\flat & d^\flat\iota_\eta & 3d^\flat\langle d^\flat,\cdot\rangle & \frac{1}{3}\eta\otimes_s\langle e^\flat,\cdot\rangle\\
0 & (\xi+\ii\mathsf{F})e^\flat & e^\flat\iota_\eta^s & \frac{1}{2}e^\flat\langle e^\flat,\cdot\rangle
\end{array}
\right).
\end{split}
\]
Here, $\eta\otimes\iota_\eta$ at the $(2,2)$-block has $(i_1',i_2')$-entry
\[\eta_{i_1',i_2'} ,\]
$\eta\otimes_s\iota_\eta$ at the $(3,3)$-block has $(i_1',j_1',i_2',j_2')$-entry
\[\frac{1}{4}(\eta_{i_1'}\eta_{i_2'}\delta_{j_1'j_2'}+\eta_{i_1'}\eta_{j_2'}\delta_{i_1'j_2'}+\eta_{i_2'}\eta_{j_1'}\delta_{i_1'j_2'}+\eta_{j_1'}\eta_{j_2'}\delta_{i_1'i_2'}) \]
and $\eta\otimes_s\iota_\eta$ at the $(4,4)$-block has $(i_1',j_1',k_1',i_2',j_2',k_2')$-entry
\[\begin{split}
\frac{1}{18}\Big(&\eta_{i_1'}\eta_{i_2'}(\delta_{j_1'j_2'}\delta_{k_1'k_2'}+\delta_{j_1'k_2'}\delta_{k_1'j_2'})+\eta_{i_1'}\eta_{j_2'}(\delta_{j_1'i_2'}\delta_{k_1'k_2'}+\delta_{j_1'k_2'}\delta_{i_1'k_2'})
+\eta_{i_1'}\eta_{k_2'}(\delta_{j_1'j_2'}\delta_{k_1'i_2'}+\delta_{j_1'i_2'}\delta_{k_1'j_2'})\\&+\eta_{j_1'}\eta_{k_2'}(\delta_{i_1'j_2'}\delta_{k_1'i_2'}+\delta_{i_1'i_2'}\delta_{k_1'j_2'})
+\eta_{j_1'}\eta_{j_2'}(\delta_{i_1'i_2'}\delta_{k_1'k_2'}+\delta_{i_1'k_2'}\delta_{i_1'k_2'})+\eta_{k_1'}\eta_{k_2'}(\delta_{i_1'i_2'}\delta_{j_1'j_2'}+\delta_{i_1'j_2'}\delta_{j_1'i_2'})\\
&+\eta_{j_1'}\eta_{i_2'}(\delta_{i_1'j_2'}\delta_{k_1'k_2'}+\delta_{i_1'k_2'}\delta_{j_1'k_2'})+\eta_{k_1'}\eta_{i_2'}(\delta_{j_1'j_2'}\delta_{i_1'k_2'}+\delta_{j_1'k_2'}\delta_{i_1'j_2'})
+\eta_{j_1'}\eta_{j_2'}(\delta_{i_1'k_2'}\delta_{j_1'i_2'}+\delta_{i_1'i_2'}\delta_{j_1'k_2'})\Big) .
\end{split}
\]
We note that the principal symbol of
$\delta^s_\mathsf{F}\mathrm{d}^s_\mathsf{F}$ is the same as the one of
$\frac{1}{4}\nabla_\mathsf{F}^*\nabla_\mathsf{F}+\frac{3}{4}\mathrm{d}^s_\mathsf{F}\delta^s_\mathsf{F}$,
which is positive definite with a lower bound
$\frac{1}{4}(\xi^2+\mathsf{F}^2+|\eta|^2)$. Suppose
$a^\flat,b^\flat,c^\flat,d^\flat,d^\flat$ have a common bound $C$, then $A$
has a bound $C^2+C|\eta|+C\mathsf{F}+C\xi\leq
C'(1+\epsilon^{-1})+\epsilon(\xi^2+\mathsf{F}^2+|\eta|^2)$. Then we
can choose $\mathsf{F}>0$ large enough, and complete the proof.
\end{proof}

Let $\Omega_j$ be a domain in $M$ with boundary $\partial \Omega_j$
transversal to $\partial X$. Let $\dot{H}^{m,l}_{sc}(\Omega_j)$ be the
subspace of $H^{m,l}_{sc}(X)$ consisting of distributions supported in
$\overline{\Omega_j}$, and let $\bar{H}^{m,l}_{sc}(\Omega_j)$ be the
space of restrictions of elements of $H^{m,l}_{sc}(X)$ to
$\Omega_j$. Thus,
$\dot{H}_{sc}^{m,l}(\Omega_j)^*=\bar{H}^{-m,-l}_{sc}(\Omega_j)$.

\begin{lemma}
There exists $\mathsf{F}_0>0$ such that for $\mathsf{F}\geq\mathsf{F}_0$, the operator $\Delta_{\mathsf{F},s}=\delta_\mathsf{F}^s\mathrm{d}^s_\mathsf{F}$, considered as a map $\dot{H}^{1,0}_{sc}\rightarrow (\dot{H}^{1,0}_{sc})^*=\bar{H}_{sc}^{-1,0}$, is invertible.
\end{lemma}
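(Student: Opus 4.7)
The plan is to deduce invertibility from coercivity via the Lax--Milgram theorem applied to the sesquilinear form
\[
B(u,v) = \langle \mathrm{d}^s_\mathsf{F} u, \mathrm{d}^s_\mathsf{F} v \rangle_{L^2(X; \mathrm{Sym}^{4}{}^{sc}T^*X)}, \qquad u,v\in\dot H^{1,0}_{sc}(\Omega_j).
\]
For $u\in\dot H^{1,0}_{sc}$, integration by parts is justified (no boundary terms, because of the ``dotted'' support condition), yielding $B(u,v)=\langle\Delta_{\mathsf{F},s}u,v\rangle$, so invertibility of $\Delta_{\mathsf{F},s}:\dot H^{1,0}_{sc}\to \bar H^{-1,0}_{sc}$ follows from boundedness and coercivity of $B$.

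Boundedness is immediate since $\mathrm{d}_\mathsf{F}^s\in\mathrm{Diff}^{1,0}_{sc}$. For coercivity, I would substitute the identity from Lemma \ref{dd1}, giving
\[
B(u,u)=\tfrac14\|\nabla_\mathsf{F} u\|^2_{L^2}+\tfrac34\|\delta^s_\mathsf{F} u\|^2_{L^2}+\langle (A+R)u,u\rangle.
\]
The first two terms are nonnegative, and the key leverage comes from $\|\nabla_\mathsf{F} u\|^2$: since $\nabla_\mathsf{F}^*\nabla_\mathsf{F}$ has principal scattering symbol $(\xi^2+\mathsf{F}^2+|\eta|^2)\,\mathrm{Id}$ as computed in (\ref{FFsymbol}), the Gårding inequality (or direct integration by parts in the scattering calculus) yields
\[
\|\nabla_\mathsf{F} u\|^2_{L^2} \;\geq\; c\bigl(\|u\|^2_{H^{1,0}_{sc}} + \mathsf{F}^2\|u\|^2_{L^2}\bigr) - C_0\|u\|^2_{L^2},
\]
for constants $c,C_0$ independent of $\mathsf{F}$.

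The main obstacle is to absorb the error term $\langle(A+R)u,u\rangle$. Since $A\in\mathrm{Diff}^{1}_{sc}$ is first order in the scattering calculus, one has $|\langle Au,u\rangle|\leq C\|u\|_{H^{1,0}_{sc}}\|u\|_{L^2}$, and by Cauchy--Schwarz/Young,
\[
|\langle Au,u\rangle|\leq \epsilon\|u\|^2_{H^{1,0}_{sc}}+C_\epsilon\|u\|^2_{L^2};
\]
the factor of $x$ in $R\in x\,\mathrm{Diff}^1_{sc}$ gives an analogous estimate with constants that can even be made small by localizing near $\partial X$, but in any case the same Cauchy--Schwarz bound works globally. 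Choosing $\epsilon$ sufficiently small and then $\mathsf{F}$ sufficiently large so that $\frac{c}{4}\mathsf{F}^2 > \frac{C_0}{4}+C_\epsilon$, the $\mathsf{F}^2\|u\|^2_{L^2}$ contribution dominates the lower-order error, yielding
\[
B(u,u)\;\geq\; c'\|u\|^2_{\dot H^{1,0}_{sc}}
\]
for some $c'>0$ and all $\mathsf{F}\geq \mathsf{F}_0$. Lax--Milgram then produces a bounded inverse $\Delta_{\mathsf{F},s}^{-1}:\bar H^{-1,0}_{sc}\to \dot H^{1,0}_{sc}$, completing the proof. This is the direct analogue of the argument used for the 1-tensor and 2-tensor cases in \cite[Section 4]{SUV2}; the only novelty here is that the bookkeeping of indices in the identity of Lemma \ref{dd1} is more elaborate on symmetric 3-tensors, but the functional-analytic structure is unchanged.
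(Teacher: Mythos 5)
Your overall strategy is the same as the paper's: both arguments rest on the identity of Lemma \ref{dd1}, the exact computation (\ref{FFsymbol}) of $\nabla_\mathsf{F}^*\nabla_\mathsf{F}$, and absorption of the error terms by taking $\mathsf{F}$ large, with invertibility then deduced from coercivity of the form $\langle \mathrm{d}^s_\mathsf{F}u,\mathrm{d}^s_\mathsf{F}v\rangle$ on $\dot{H}^{1,0}_{sc}$ (the paper phrases this as the a priori bound $\|u\|_{\dot{H}^{1,0}_{sc}}\leq C\|\Delta_{\mathsf{F},s}u\|_{\bar{H}^{-1,0}_{sc}}$ obtained by duality rather than invoking Lax--Milgram by name, but that difference is cosmetic).

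There is, however, a genuine gap in your treatment of the remainder $R\in x\,\mathrm{Diff}^1_{sc}$. You assert that ``the same Cauchy--Schwarz bound works globally'' and then choose $\mathsf{F}$ so large that $\frac{c}{4}\mathsf{F}^2>\frac{C_0}{4}+C_\epsilon$. This presumes that $C_\epsilon$ is independent of $\mathsf{F}$, which it is not: $A$ and $R$ arise from the conjugation by $e^{\mathsf{F}/x}$ and from rewriting $\nabla_\mathsf{F}^*\nabla_\mathsf{F}$ via (\ref{FFsymbol}), so their coefficients carry powers of $\mathsf{F}$. The paper tracks this dependence: the $A$-term contributes $C\mathsf{F}\|u\|^2_{L^2_{sc}}$, which is indeed beaten by $\frac14\mathsf{F}^2\|u\|^2_{L^2_{sc}}$, but the $x\,\mathrm{Diff}^1_{sc}$ remainder contributes $C(\mathsf{F})\|x^{1/2}u\|^2_{L^2_{sc}}$ with $C(\mathsf{F})$ growing with $\mathsf{F}$ (the conjugation produces terms of size up to $\mathsf{F}^2x$ there), so after normalization one is left with $\langle(1-Cx)u,u\rangle$, whose positivity is \emph{not} obtained by enlarging $\mathsf{F}$. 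The paper closes the argument using the geometric input that $\Omega_j\subset\{x\leq x_0\}$ with $x_0$ sufficiently small and, when $x_0$ is not small, a Poincar\'e inequality on the region $\{x_1\leq x\leq x_0\}$. Your parenthetical remark that the constants ``can even be made small by localizing near $\partial X$'' is the correct mechanism, but it is the essential step rather than an optional refinement; without it (or the Poincar\'e argument for larger domains) the coercivity estimate does not follow for any choice of $\mathsf{F}$.
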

\begin{proof}
Since $\delta^s_\mathsf{F}$ is defined as the adjoint of $\mathrm{d}^s_\mathsf{F}$ relative to the scattering metric, we have
\begin{equation}\label{est1}
\begin{split}
\|\mathrm{d}^s_\mathsf{F}u\|^2_{L^2_{sc}}&=\langle \mathrm{d}^s_\mathsf{F}u,\mathrm{d}^s_\mathsf{F}u\rangle=\langle\Delta_{\mathsf{F},s}u,u\rangle\\
&\leq \|\Delta_{\mathsf{F},s}u\|_{\bar{H}_{sc}^{-1,0}}\|u\|_{\dot{H}^{1,0}_{sc}}\leq \epsilon^{-1} \|\Delta_{\mathsf{F},s}u\|_{\bar{H}_{sc}^{-1,0}}^2+\epsilon \|u\|_{\dot{H}^{1,0}_{sc}}^2.
\end{split}
\end{equation}
By (\ref{fact1}) and (\ref{FFsymbol}), we have
\begin{equation}\label{fact2}
\delta^s_\mathsf{F}\mathrm{d}^s_\mathsf{F}=\frac{1}{4}\nabla^*\nabla+\frac{1}{4}\mathsf{F}^2+\frac{3}{4}\mathrm{d}^s_\mathsf{F}\delta^s_\mathsf{F}+A+\tilde{R},
\end{equation}
where $A\in\mathrm{Diff}_{sc}^1(X)$ with
\[|\langle Au, u\rangle|\leq C\|u\|_{\dot{H}^{1,0}_{sc}}\|u\|_{L^2_{sc}}+C\mathsf{F}\|u\|_{L^2_{sc}}^2,\]
and $\tilde{R}\in x\mathrm{Diff}^1_{sc}(X)$. This follows by rewriting $\nabla^*_\mathsf{F}\nabla_\mathsf{F}$ using (\ref{FFsymbol}), which modifies $R$ in $(\ref{fact1})$. Thus, we have
\[\|\mathrm{d}^s_\mathsf{F}u\|^2_{L^2_{sc}}=\frac{1}{4}\|\nabla u\|^2_{L^2_{sc}}+\frac{1}{4}\mathsf{F}^2\|u\|^2_{L^2_{sc}}+\frac{3}{4}\|\delta^s_\mathsf{F}u\|^2_{L^2_{sc}}+\langle Au,u\rangle+\langle \tilde{R}u,u\rangle.\]
This gives us
\begin{equation}
\|\nabla u\|^2_{L^2_{sc}}+\mathsf{F}^2\|u\|^2_{L^2_{sc}}\leq C\|\mathrm{d}^s_\mathsf{F}u\|^2_{L^2_{sc}}+C\|x^{1/2}u\|^2_{L^2_{sc}}+C\|u\|_{\dot{H}^{1,0}_{sc}}\|u\|_{L^2_{sc}}+C\mathsf{F}\|u\|_{L^2_{sc}}^2.
\end{equation}
Then for sufficiently large $\mathsf{F}$,
\begin{equation}
\|\nabla u\|^2_{L^2_{sc}}+\mathsf{F}^2\|u\|^2_{L^2_{sc}}\leq C\|\mathrm{d}^s_\mathsf{F}u\|^2_{L^2_{sc}}+C\|x^{1/2}u\|^2_{L^2_{sc}},
\end{equation}
where $C$ is a constant depending on $\mathsf{F}$, and thus
\[\|\nabla u\|^2_{L^2_{sc}}+\langle (1-Cx)u,u\rangle\leq C\|\mathrm{d}^s_\mathsf{F}u\|^2_{L^2_{sc}}.\]
Now suppose that $\Omega_j$ is contained in $\{x\leq x_0\}$. If $x_0$ is sufficiently small, this gives
\begin{equation}\label{Poincare}
\|\nabla u\|_{L^2_{sc}}+\|u\|_{L^2_{sc}}\leq C\|\mathrm{d}^s_\mathsf{F}u\|_{L^2_{sc}}.
\end{equation} 
If $x_0$ is larger, we can still have
\[
\|\nabla u\|_{L^2_{sc}}+\|u\|_{L^2_{sc}}\leq C\|\mathrm{d}^s_\mathsf{F}u\|_{L^2_{sc}}+C\|u\|_{L^2_{sc}(\{x_1\leq x\leq x_0\})},
\]
with $x_1$ small, and thus have $(\ref{Poincare})$ by the standard
Poincar\'{e} inequality (See \cite[Equation (28)]{SU1} for one
forms). Then, with $(\ref{est1})$, and choosing $\epsilon>0$ small, we
find that
\[
\|u\|_{\dot{H}^{1,0}_{sc}}\leq C\|\Delta_{\mathsf{F},s}u\|_{\bar{H}_{sc}^{-1,0}}.
\]
Therefore, we have proved the invertibility of $\Delta_{\mathsf{F},s}$.
 \end{proof}

Using Lemma 4.4 in \cite{SUV2}, in parallel to the above lemmas, we obtain

\begin{lemma}
There exists $\mathsf{F}_0>0$ such that for $\mathsf{F}>\mathsf{F}_0$, the operator $\Delta_{\mathsf{F},s}=\delta_\mathsf{F}^s\mathrm{d}^s_\mathsf{F}$ on symmetric $3$-tensors is invertible as a map $\dot{H}_{sc}^{1,r}\rightarrow\bar{H}_{sc}^{-1,r}$ for all $r\in\mathbb{R}$.
\end{lemma}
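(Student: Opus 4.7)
The plan is to reduce the general $r$ case to the $r=0$ case established in the previous lemma by a conjugation argument in the scattering calculus. Multiplication by $x^r$ is an isomorphism $\dot{H}^{1,0}_{sc}\to\dot{H}^{1,r}_{sc}$ and $\bar{H}^{-1,0}_{sc}\to\bar{H}^{-1,r}_{sc}$, so invertibility of $\Delta_{\mathsf{F},s}:\dot{H}^{1,r}_{sc}\to\bar{H}^{-1,r}_{sc}$ is equivalent to invertibility of the conjugated operator
\[
\Delta^{(r)}_{\mathsf{F},s}:=x^{-r}\Delta_{\mathsf{F},s}x^{r}:\dot{H}^{1,0}_{sc}\to\bar{H}^{-1,0}_{sc}.
\]

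Next, I would compute the conjugation explicitly. Since $x^{r}$ commutes with the exponential weight $e^{\pm\mathsf{F}/x}$, one has
\[
x^{-r}\mathrm{d}^s_{\mathsf{F}}x^{r}=e^{-\mathsf{F}/x}\bigl(x^{-r}\mathrm{d}^s x^{r}\bigr)e^{\mathsf{F}/x}=\mathrm{d}^s_{\mathsf{F}}+r E,
\qquad
x^{-r}\delta^s_{\mathsf{F}}x^{r}=\delta^s_{\mathsf{F}}+r E',
\]
where $E,E'$ come from the commutator $[x^{-r},\mathrm{d}^s]x^{r}=-r x^{-1}\mathrm{d}x\otimes_s\cdot\,$ expressed in the scattering basis $\mathrm{d}x/x^{2},\mathrm{d}y/x$. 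A direct check shows that $E,E'\in x\,\mathrm{Diff}^0_{sc}$ with coefficients independent of $\mathsf{F}$. Combining,
\[
\Delta^{(r)}_{\mathsf{F},s}=\Delta_{\mathsf{F},s}+rB_1+r^{2}B_0,
\]
where $B_1\in\mathrm{Diff}^1_{sc}$, $B_0\in\mathrm{Diff}^0_{sc}$, both $\mathsf{F}$-independent and with a factor of $x$ in their coefficients.

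Then I would repeat the coercivity argument used to prove the previous lemma, but for $\Delta^{(r)}_{\mathsf{F},s}$ in place of $\Delta_{\mathsf{F},s}$. The identity (\ref{fact2}) carries over with an additional error $rB_1+r^{2}B_0$; by Cauchy--Schwarz and the $x$-vanishing of its coefficients, this error satisfies
\[
\bigl|\langle (rB_1+r^{2}B_0)u,u\rangle\bigr|\leq C(r)\bigl(\|u\|_{\dot{H}^{1,0}_{sc}}\|u\|_{L^2_{sc}}+\|x^{1/2}u\|^{2}_{L^2_{sc}}\bigr),
\]
with $C(r)$ independent of $\mathsf{F}$. For $\mathsf{F}\geq\mathsf{F}_0(r)$ sufficiently large, this error is absorbed into the $\tfrac{1}{4}\mathsf{F}^{2}\|u\|^{2}_{L^2_{sc}}$ term coming from (\ref{fact2}) together with the Poincar\'{e} estimate, yielding
\[
\|u\|_{\dot{H}^{1,0}_{sc}}\leq C\|\Delta^{(r)}_{\mathsf{F},s}u\|_{\bar{H}^{-1,0}_{sc}}.
\]
The same bound applied to the $L^2_{sc}$-adjoint (which has the same structure, with $r$ replaced by $-r$) gives surjectivity, whence invertibility of $\Delta^{(r)}_{\mathsf{F},s}$ and therefore of $\Delta_{\mathsf{F},s}:\dot{H}^{1,r}_{sc}\to\bar{H}^{-1,r}_{sc}$.

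The main obstacle is keeping careful track of the conjugation-induced error terms to verify that they are indeed $\mathsf{F}$-independent and vanish at the scattering boundary to the required order; otherwise they could not be absorbed by increasing $\mathsf{F}$. Once this bookkeeping is carried out, the large-$\mathsf{F}$ mechanism of the previous lemma (the $\tfrac{1}{4}\mathsf{F}^{2}$ term dominating the subprincipal contributions) directly delivers the result, with $\mathsf{F}_{0}$ now allowed to depend on $r$.
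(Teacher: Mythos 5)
The paper offers no proof of this lemma beyond the citation of \cite[Lemma 4.4]{SUV2}, and that cited argument is exactly your conjugation-by-$x^{r}$ reduction to the $r=0$ case followed by re-running the coercivity estimate, so your proposal is correct and takes essentially the same route. One bookkeeping caveat: the cross terms $\delta^s_{\mathsf{F}}E$ and $E'\mathrm{d}^s_{\mathsf{F}}$ inherit an $O(\mathsf{F})$ coefficient from the weights inside $\mathrm{d}^s_{\mathsf{F}}$, so $B_1$ lies in $x\bigl(\mathrm{Diff}^1_{sc}+\mathsf{F}\,\mathrm{Diff}^0_{sc}\bigr)$ rather than being $\mathsf{F}$-independent; the resulting $C\mathsf{F}\|x^{1/2}u\|^2_{L^2_{sc}}$ contribution is absorbed for large $\mathsf{F}$ (or small $x_0$) exactly as the analogous $C\mathsf{F}\|u\|^2_{L^2_{sc}}$ term in $A$ was handled in the $r=0$ proof.
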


\begin{lemma}
Let $\Omega_j$ be a domain contained in $X$ as above. For $\mathsf{F}>0$ and $r\in\mathbb{R}$,
\[
\|u\|_{\bar{H}_{sc}^{1,r}(\Omega_j)}\leq C(\|x^{-r}\mathrm{d}^s_\mathsf{F}u\|_{L^2_{sc}(\Omega_j)}+\|u\|_{x^{-r}L^2_{sc}(\Omega_j)}),
\]
for symmetric $3$-tensors $u\in\bar{H}_{sc}^{1,r}(\Omega_j)$.
\end{lemma}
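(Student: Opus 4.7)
The target estimate is the $\bar H$-analogue of the coercive estimate in the previous lemma: now $u$ carries no Dirichlet-type condition at the interior boundary of $\Omega_j$, so the right-hand side has to also absorb the lower-order $\|u\|_{x^{-r}L^2_{sc}}$. My approach is to (i) reduce to $r=0$ by a weight substitution, (ii) pair the identity \eqref{fact2} of Lemma~\ref{dd1} with $u$ to obtain coercive control of $\tfrac14\|\nabla u\|^2+\tfrac{\mathsf{F}^2}{4}\|u\|^2$ by $\|\mathrm{d}^s_\mathsf{F} u\|^2$ modulo first-order errors, and (iii) absorb the error terms $\langle Av,v\rangle$ and $\langle\tilde R v,v\rangle$ via Cauchy--Schwarz with small $\epsilon$.

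For (i), set $v=x^{-r}u$. Since multiplication by $e^{\pm\mathsf{F}/x}$ commutes with multiplication by $x^{r}$, the commutator reduces to $[\mathrm{d}^s_\mathsf{F},x^{r}]=\mathrm{d}^s(x^{r})\otimes_s=r x^{r-1}\mathrm{d}x\otimes_s$, hence
\[
x^{-r}\mathrm{d}^s_\mathsf{F} u=\mathrm{d}^s_\mathsf{F} v+r x^{-1}\mathrm{d}x\otimes_s v=\mathrm{d}^s_\mathsf{F} v+r\,x\,\bigl(\tfrac{\mathrm{d}x}{x^{2}}\bigr)\otimes_s v.
\]
The correction is a bounded multiplication operator on $L^2_{sc}$, because $\mathrm{d}x/x^{2}$ has unit scattering norm and $x$ is bounded on $\Omega_j$. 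Combined with $\|u\|_{\bar H^{1,r}_{sc}(\Omega_j)}=\|v\|_{\bar H^{1,0}_{sc}(\Omega_j)}$ and $\|u\|_{x^{-r}L^2_{sc}(\Omega_j)}=\|v\|_{L^2_{sc}(\Omega_j)}$, this reduces the problem to the $r=0$ case.

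For (ii)--(iii), I would first extend $v$ from $\bar H^{1,0}_{sc}(\Omega_j)$ to $H^{1,0}_{sc}(X)$ by a bounded extension operator (which exists because $\partial\Omega_j$ is transversal to $\partial X$), and then pair the identity \eqref{fact2} with $v$ over $X$, obtaining
\[
\|\mathrm{d}^s_\mathsf{F} v\|^2_{L^2_{sc}}=\tfrac14\|\nabla v\|^2_{L^2_{sc}}+\tfrac{\mathsf{F}^{2}}{4}\|v\|^2_{L^2_{sc}}+\tfrac34\|\delta^s_\mathsf{F} v\|^2_{L^2_{sc}}+\langle Av,v\rangle+\langle\tilde R v,v\rangle.
\]
Since $A\in\mathrm{Diff}^1_{sc}$ and $\tilde R\in x\mathrm{Diff}^1_{sc}$, Cauchy--Schwarz gives
\[
|\langle Av,v\rangle|+|\langle\tilde R v,v\rangle|\le\epsilon\,\|v\|^2_{\bar H^{1,0}_{sc}}+C_{\epsilon,\mathsf{F}}\,\|v\|^2_{L^2_{sc}}.
\]
Choosing $\epsilon$ small enough to absorb the $\|\nabla v\|^2$ part into the left-hand side, and discarding the non-negative $\tfrac34\|\delta^s_\mathsf{F} v\|^2$, yields $\|\nabla v\|^2+\|v\|^2\le C(\|\mathrm{d}^s_\mathsf{F} v\|^2+\|v\|^2)$. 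Restricting back to $\Omega_j$ and combining with the reduction above closes the argument.

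The main technical obstacle is precisely the pairing step: identity \eqref{fact2} is an operator identity on $X$, and turning it into the $L^2_{sc}$-identity above demands integration by parts, which runs cleanly only on an open neighborhood avoiding interior-boundary contributions from $\partial\Omega_j\setminus\partial X$. The transversality of $\partial\Omega_j$ to $\partial X$ is what supplies the bounded extension that bridges this gap (the scattering conventions at $\partial X$ itself eliminate boundary terms there automatically). This is the same manoeuvre used for rank-$1$ and rank-$2$ tensors in \cite[Section~4]{SUV2}, and all the estimates above carry over verbatim to symmetric $3$-tensors thanks to Lemma~\ref{dd1}.
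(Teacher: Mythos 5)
Your overall architecture (reduce to $r=0$, extend across the interior boundary, then run the coercivity identity \eqref{fact2} and absorb the first-order errors) matches the paper's, but there is a genuine gap at the extension step, and it is exactly the step where all the work lives. You invoke ``a bounded extension operator'' $E:\bar H^{1,0}_{sc}(\Omega_j)\to H^{1,0}_{sc}(X)$. If $E$ is only bounded in the $H^{1,0}_{sc}$ sense, then the best you can say about the right-hand side after extension is $\|\mathrm{d}^s_\mathsf{F}Ev\|_{L^2_{sc}}\le C\|Ev\|_{H^{1,0}_{sc}}\le C\|v\|_{\bar H^{1,0}_{sc}(\Omega_j)}$, and your final estimate degenerates to $\|v\|_{\bar H^{1,0}_{sc}}\le C(\|v\|_{\bar H^{1,0}_{sc}}+\|v\|_{L^2_{sc}})$, which is circular: the quantity you are trying to bound reappears on the right. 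What is actually needed is an extension satisfying the stronger property
\[
\|\mathrm{d}^s_\mathsf{F}Eu\|_{L^2_{sc}(\tilde\Omega_j)}+\|Eu\|_{L^2_{sc}(\tilde\Omega_j)}\le C\bigl(\|\mathrm{d}^s_\mathsf{F}u\|_{L^2_{sc}(\Omega_j)}+\|u\|_{L^2_{sc}(\Omega_j)}\bigr),
\]
i.e.\ boundedness with respect to the degenerate energy on the right-hand side of the lemma, not the full $H^1$ norm. Since $\mathrm{d}^s$ is an overdetermined-elliptic (Korn-type) operator, the equivalence of that energy with the $H^1$ norm on a domain without boundary conditions is precisely the content of the lemma and cannot be presupposed when building $E$.

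The paper supplies this missing ingredient concretely: after localizing to $\overline{\mathbb{R}^n_+}$, it extends symmetric $3$-tensors by a Hestenes-type higher-order reflection $E_1 f=\sum_{q=1}^5 C_q\,\Phi_q^*f$ with $\Phi_q(x',x_n)=(x',-qx_n)$, where the five coefficients $C_q$ solve a Vandermonde system forcing $C^1$ matching of all tensor components across $\{x_n=0\}$. The decisive point is that pullback commutes with the symmetrized derivative, $\mathrm{d}^s\Phi_q^*=\Phi_q^*\mathrm{d}^s$, so $\mathrm{d}^sE_1u$ on the reflected side is a finite combination of pullbacks of $\mathrm{d}^su$, giving $\|\mathrm{d}^sE_1u\|_{L^2(\mathbb{R}^n)}\le C\|\mathrm{d}^su\|_{L^2(\mathbb{R}^n_+)}$ with no $H^1$ norm of $u$ appearing. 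Your closing paragraph gestures at ``the bounded extension that bridges this gap'' but does not identify that the extension must be energy-bounded in this sense nor how to achieve it; as written, the proof does not close.
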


\begin{proof}
By the proof of Lemma 4.5 in \cite{SUV2}, we only need to consider the
case $r=0$. Let $\tilde{\Omega}_j$ be a domain in $X$ with $C^\infty$
boundary, transversal to $\partial X$, containing
$\overline{\Omega_j}$. We show that there exist a continuous
extension map $E:\bar{H}^{1,2}_{sc}(\Omega_j)\rightarrow
\dot{H}^{1,2}_{sc}(\tilde{\Omega}_j)$ such that
\begin{equation}\label{extension}
\|\mathrm{d}^s_\mathsf{F}Eu\|_{L^2_{sc}(\tilde{\Omega}_j)}+\|Eu\|_{L^2_{sc}(\tilde{\Omega}_j)}\leq C(\|\mathrm{d}^s_\mathsf{F}u\|_{L^2_{sc}(\Omega_j)}+\|u\|_{L^2_{sc}(\Omega_j)}),~~u\in \bar{H}^{1,0}_{sc}(\Omega_j).
\end{equation}
Once (\ref{extension}) is proved, by Lemma \ref{dd1}, with $v=Eu$, we have
\[
\begin{split}
\|\nabla v\|_{L^2_{sc}(\tilde{\Omega}_j)}^2+\|v\|_{L^2_{sc}(\tilde{\Omega}_j)}^2&\leq C(\|\mathrm{d}^s_\mathsf{F}v\|^2_{L^2_{sc}(\tilde{\Omega}_j)}+\|v\|^2_{L^2_{sc}(\tilde{\Omega}_j)})\\
& \leq C(\|\mathrm{d}^s_\mathsf{F}u\|^2_{L^2_{sc}(\Omega_j)}+\|v\|^2_{L^2_{sc}(\Omega_j)}).
\end{split}
\]
This finally gives
\[
\|u\|_{\bar{H}^{1,0}_{sc}(\Omega_j)}\leq C(\|\mathrm{d}^s_\mathsf{F}u\|^2_{L^2_{sc}(\Omega_j)}+\|v\|^2_{L^2_{sc}(\Omega_j)}).
\]

The only thing remaining is to construct $E$. By a partition of unity,
this can be reduced to the situation where locally
$X=\overline{\mathbb{R}^n}$, $\overline{\Omega_j} =
\overline{\mathbb{R}^n_+}$; see the proof of Lemma 4.5 in
\cite{SUV2}. We only need to analyze the extension of a symmetric
3-tensor on $\overline{\mathbb{R}^n_+}$ to $\overline{\mathbb{R}^n}$.

We let $\Phi_q(x',x_n')=(x',-qx_n)$ for $x_n<0$ be a diffeomorphism from $\{x_n<0\}$ to $\{x_n>0\}$. For $f_{ijk}\mathrm{d}x^{i}\otimes\mathrm{d}x^{j}\otimes\mathrm{d}x^{k}$ on $\{x_0\geq 0\}$, we define $E_1$ to be the extension to $\mathrm{R}^n$,
\[
E_1(f_{ijk}\mathrm{d}x^{i}\otimes\mathrm{d}x^{j}\otimes\mathrm{d}x^{k})(x',x_n)=\sum_{q=1}^5C_q\Phi_q^*(f_{ijk}\mathrm{d}x^{i}\otimes\mathrm{d}x^{j}\otimes\mathrm{d}x^{k}),~~x_n<0
\]
and 
\[E_1(f_{ijk}\mathrm{d}x^{i}\otimes\mathrm{d}x^{j}\otimes\mathrm{d}x^{k})(x',x_n)=f_{ijk}\mathrm{d}x^{i}\otimes\mathrm{d}x^{j}\otimes\mathrm{d}x^{k},~~x_n\geq 0,\]
with $C_q$ chosen so that $E_1:C^1(\overline{\mathbb{R}^n_+})\rightarrow C^1(\overline{\mathbb{R}^n})$. By calculation
\[
\begin{split}
&\Phi_q^*f_{ijk}\mathrm{d}x^{i}\otimes\mathrm{d}x^{j}\otimes\mathrm{d}x^{k}=f_{ijk}(x',-qx_n)\mathrm{d}x^{i}\otimes\mathrm{d}x^{j}\otimes\mathrm{d}x^{k},~~i,j,k\neq n,\\
&\Phi_q^*f_{ijn}\mathrm{d}x^{i}\otimes\mathrm{d}x^{j}\otimes\mathrm{d}x^{n}=-qf_{ijn}(x',-qx_n)\mathrm{d}x^{i}\otimes\mathrm{d}x^{j}\otimes\mathrm{d}x^{n},~~i,j\neq n,\\
&\Phi_q^*f_{inn}\mathrm{d}x^{i}\otimes\mathrm{d}x^{n}\otimes\mathrm{d}x^{n}=q^2f_{inn}(x',-qx_n)\mathrm{d}x^{i}\otimes\mathrm{d}x^{n}\otimes\mathrm{d}x^{n},~~i\neq n,\\
&\Phi_q^*f_{nnn}\mathrm{d}x^{n}\otimes\mathrm{d}x^{n}\otimes\mathrm{d}x^{n}=-q^3f_{nnn}(x',-qx_n)\mathrm{d}x^{n}\otimes\mathrm{d}x^{n}\otimes\mathrm{d}x^{n},\\
&\partial_l\Phi_q^*f_{ijk}\mathrm{d}x^{i}\otimes\mathrm{d}x^{j}\otimes\mathrm{d}x^{k}=\partial_lf_{ijk}(x',-qx_n)\mathrm{d}x^{i}\otimes\mathrm{d}x^{j}\otimes\mathrm{d}x^{k},~~i,j,k,l\neq n,\\
&\partial_l\Phi_q^*f_{ijn}\mathrm{d}x^{i}\otimes\mathrm{d}x^{j}\otimes\mathrm{d}x^{n}=-q\partial_lf_{ijn}(x',-qx_n)\mathrm{d}x^{i}\otimes\mathrm{d}x^{j}\otimes\mathrm{d}x^{n},~~i,j,l\neq n,\\
&\partial_l\Phi_q^*f_{inn}\mathrm{d}x^{i}\otimes\mathrm{d}x^{n}\otimes\mathrm{d}x^{n}=q^2\partial_lf_{inn}(x',-qx_n)\mathrm{d}x^{i}\otimes\mathrm{d}x^{n}\otimes\mathrm{d}x^{n},~~i,l\neq n,\\
&\partial_l\Phi_q^*f_{nnn}\mathrm{d}x^{n}\otimes\mathrm{d}x^{n}\otimes\mathrm{d}x^{n}=-q^3\partial_lf_{nnn}(x',-qx_n)\mathrm{d}x^{n}\otimes\mathrm{d}x^{n}\otimes\mathrm{d}x^{n},~~l\neq 0\\
&\partial_n\Phi_q^*f_{ijk}\mathrm{d}x^{i}\otimes\mathrm{d}x^{j}\otimes\mathrm{d}x^{k}=-q\partial_nf_{ijk}(x',-qx_n)\mathrm{d}x^{i}\otimes\mathrm{d}x^{j}\otimes\mathrm{d}x^{k},~~i,j,k\neq n,\\
&\partial_n\Phi_q^*f_{ijn}\mathrm{d}x^{i}\otimes\mathrm{d}x^{j}\otimes\mathrm{d}x^{n}=q^2\partial_nf_{ijn}(x',-qx_n)\mathrm{d}x^{i}\otimes\mathrm{d}x^{j}\otimes\mathrm{d}x^{n},~~i,j\neq n,\\
&\partial_n\Phi_q^*f_{inn}\mathrm{d}x^{i}\otimes\mathrm{d}x^{n}\otimes\mathrm{d}x^{n}=-q^3\partial_nf_{inn}(x',-qx_n)\mathrm{d}x^{i}\otimes\mathrm{d}x^{n}\otimes\mathrm{d}x^{n},~~i\neq n,\\
&\partial_n\Phi_q^*f_{nnn}\mathrm{d}x^{n}\otimes\mathrm{d}x^{n}\otimes\mathrm{d}x^{n}=q^4\partial_nf_{nnn}(x',-qx_n)\mathrm{d}x^{n}\otimes\mathrm{d}x^{n}\otimes\mathrm{d}x^{n}.
\end{split}
\]
The matching of the derivatives at $x_n=0$, which gives the $C^1$ property, yields
\[
\begin{split}
&C_1+C_2+C_3+C_4+C_5=1,\\
&C_1+2C_2+3C_3+4C_4+5C_5=-1,\\
&C_1+4C_2+9C_3+16C_4+25C_5=1,\\
&C_1+8C_2+27C_3+64C_4+125C_5=-1,\\
&C_1+16C_2+81C_3+256C_4+625C_5=1.
\end{split}
\]
The linear system, with a Vandermonde matrix, is solvable. With the
$C_q,q=1,2,\cdots,5$, satisfying the linear system above, we obtain the
property $E_1:C^1_c(\overline{\mathbb{R}^n_+})\rightarrow
C^1_c(\overline{\mathbb{R}^n})$ and
\[
\|E_1u\|_{H^1(\mathbb{R}^n)}\leq C\|u\|_{H^1(\mathbb{R}^n_+)}.
\]

With $\Phi^*_q$ acting on 4-tensors as usual, we have
\[
\mathrm{d}^s\Phi^*_q=\Phi_q^*\mathrm{d}^s,
\]
and thus
\[
\|\mathrm{d}^s\Phi^*_qu\|_{L^2(\mathbb{R}^n_-)}\leq C\|\mathrm{d}^su\|_{L^2(\mathbb{R}^n_+)}.
\]
Then
\[
\|\mathrm{d}^sE_1u\|_{L^2(\mathbb{R}^n)}\leq C\|\mathrm{d}^su\|_{L^2(\mathbb{R}_+^n)}
\]
which completes the proof.
\end{proof}

Now we define
\begin{eqnarray*}
&\mathcal{S}_{\mathsf{F},\Omega_j}=\mathrm{Id}-\mathrm{d}^s_{\mathsf{F}}\Delta_{\mathsf{F},s}^{-1}\delta^s_\mathsf{F},\\
&\mathcal{P}_{\mathsf{F},\Omega_j}=\mathrm{d}^s_{\mathsf{F}}Q_{\mathsf{F},\Omega_j},~~Q_{\mathsf{F},\Omega_j}=\Delta_{\mathsf{F},s}^{-1}\delta^s_\mathsf{F}.
\end{eqnarray*}
In parallel to Corollaries 4.6, 4.7, 4.8 in \cite{SUV2}, we have
obtain for the Dirichlet Laplacian $\Delta_{\mathsf{F},s}$

\begin{corollary}\label{coro46}
Let $\phi$ on $C_c^\infty(\overline{\Omega_j}\setminus\partial_{\mathrm{int}}\Omega_j)$. Then on symmetric $3$-tensors, there exists $\mathsf{F}_0>0$ such that for any $\mathsf{F}\geq\mathsf{F}_0$, $\phi\Delta_{\mathsf{F},s}^{-1}\phi:\bar{H}_{sc}^{-1,k}\rightarrow \dot{H}_{sc}^{1,k}$ is in $\Psi_{sc}^{-2,0}(X)$.
\end{corollary}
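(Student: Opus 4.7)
The strategy mirrors the proof of Corollary~4.6 in \cite{SUV2}: construct a scattering parametrix for $\Delta_{\mathsf{F},s}$, and use the support condition on $\phi$ to show that the Dirichlet inverse, sandwiched by $\phi$, agrees with this parametrix modulo residual terms.

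First I would invoke Lemma \ref{dd1}, which gives joint ellipticity of $\Delta_{\mathsf{F},s}$ in $\mathrm{Diff}_{sc}^{2,0}(X;\mathrm{Sym}^3{}^{sc}T^*X,\mathrm{Sym}^3{}^{sc}T^*X)$ for $\mathsf{F}\geq \mathsf{F}_0$. The standard elliptic parametrix construction in Melrose's scattering pseudodifferential algebra then yields $G\in\Psi_{sc}^{-2,0}(X;\mathrm{Sym}^3{}^{sc}T^*X,\mathrm{Sym}^3{}^{sc}T^*X)$ with
\[
G\Delta_{\mathsf{F},s}=\mathrm{Id}+R_L,\qquad \Delta_{\mathsf{F},s}G=\mathrm{Id}+R_R,
\]
where $R_L,R_R\in\Psi_{sc}^{-\infty,-\infty}(X)$ are residual. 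Combining this with the invertibility of $\Delta_{\mathsf{F},s}:\dot{H}_{sc}^{1,k}\to\bar{H}_{sc}^{-1,k}$ from the preceding lemma, one has $\Delta_{\mathsf{F},s}^{-1}=G-GR_R\Delta_{\mathsf{F},s}^{-1}$ as operators on scattering Sobolev spaces, so
\[
\phi\Delta_{\mathsf{F},s}^{-1}\phi=\phi G\phi-\phi GR_R\Delta_{\mathsf{F},s}^{-1}\phi.
\]
The first term lies in $\Psi_{sc}^{-2,0}(X)$ since $G$ does and $\phi\in C_c^\infty$. The task reduces to showing the error term is residual.

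To handle the error term $\phi G R_R \Delta_{\mathsf{F},s}^{-1}\phi$, I would exploit the crucial fact that $\phi$ is supported away from the interior (artificial) boundary $\partial_{\mathrm{int}}\Omega_j$. Choose a nested sequence of cutoffs $\phi\prec\tilde\phi_1\prec\tilde\phi_2\prec\cdots$ in $C_c^\infty(\overline{\Omega_j}\setminus\partial_{\mathrm{int}}\Omega_j)$. Insert these cutoffs, writing $\Delta_{\mathsf{F},s}^{-1}\phi=\tilde\phi_1\Delta_{\mathsf{F},s}^{-1}\phi+(1-\tilde\phi_1)\Delta_{\mathsf{F},s}^{-1}\phi$, and iterate. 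The contributions where nested cutoffs get separated from $\phi$ involve products of the form $\phi_a R_R\phi_b$ with disjoint supports relative to $\partial_{\mathrm{int}}\Omega_j$, and since $R_R$ is smoothing on $X$, these terms are smoothing as well. The remaining nested term feeds into a Neumann-type expansion which, by the boundedness of $\Delta_{\mathsf{F},s}^{-1}$ on all $\bar H_{sc}^{-1,r}$, converges in the sense of asymptotic sums to a residual operator modulo $\Psi_{sc}^{-2,0}$.

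The main obstacle is the composition $R_R\Delta_{\mathsf{F},s}^{-1}$: while $R_R$ is smoothing and $\Delta_{\mathsf{F},s}^{-1}$ is bounded on scattering Sobolev spaces, neither fact alone places this composition in the calculus, because $\Delta_{\mathsf{F},s}^{-1}$ carries the non-pseudodifferential behaviour of the Dirichlet condition at $\partial_{\mathrm{int}}\Omega_j$. The support condition on $\phi$ is precisely what one needs to cut this boundary contribution away, so that what remains is the interior inversion of an elliptic scattering operator and therefore pseudodifferential. Once this localization is justified, the corollary follows, and the same argument runs for each scattering weight $k\in\mathbb{R}$ by the preceding invertibility lemma with parameter $r=k$.
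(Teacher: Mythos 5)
The paper offers no proof of this corollary at all---it is stated ``in parallel to Corollaries 4.6--4.8 in [SUV2]''---so your proposal is being measured against the argument of \cite[Corollary 4.6]{SUV2}, whose overall architecture (elliptic parametrix for $\Delta_{\mathsf{F},s}$ from Lemma \ref{dd1}, plus localization away from $\partial_{\mathrm{int}}\Omega_j$) you have correctly identified. However, the pivotal identity you write down, $\Delta_{\mathsf{F},s}^{-1}=G-GR_R\Delta_{\mathsf{F},s}^{-1}$, is not valid. The relation $\Delta_{\mathsf{F},s}^{-1}\Delta_{\mathsf{F},s}=\mathrm{Id}$ holds only on the domain $\dot{H}^{1,k}_{sc}(\Omega_j)$ of the Dirichlet realization, i.e.\ on tensors \emph{supported} in $\overline{\Omega_j}$, and the global parametrix $G$ does not preserve that support; likewise $\Delta_{\mathsf{F},s}\Delta_{\mathsf{F},s}^{-1}=\mathrm{Id}$ only on $\bar{H}^{-1,k}_{sc}(\Omega_j)$. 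So neither $G=\Delta_{\mathsf{F},s}^{-1}(\Delta_{\mathsf{F},s}G)$ nor $G=(G\Delta_{\mathsf{F},s})\Delta_{\mathsf{F},s}^{-1}$ can be asserted, and the decomposition $\phi\Delta_{\mathsf{F},s}^{-1}\phi=\phi G\phi-\phi GR_R\Delta_{\mathsf{F},s}^{-1}\phi$ is unjustified. You flag this difficulty yourself in the last paragraph, but the ``Neumann-type expansion'' you gesture at never resolves it concretely.

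The repair is to apply the parametrix not to $\Delta_{\mathsf{F},s}^{-1}$ as an operator but to the localized solution. Set $u=\Delta_{\mathsf{F},s}^{-1}\phi f$ and choose $\phi\prec\phi_1$ with $\phi_1\in C_c^\infty(\overline{\Omega_j}\setminus\partial_{\mathrm{int}}\Omega_j)$. Then $\phi_1u$ extends by zero to an element of $H^{1,k}_{sc}(X)$, and
\[
\Delta_{\mathsf{F},s}(\phi_1u)=\phi f+[\Delta_{\mathsf{F},s},\phi_1]u ,
\]
so applying $G$ (with $G\Delta_{\mathsf{F},s}=\mathrm{Id}+R_L$) and multiplying by $\phi$ gives
\[
\phi\Delta_{\mathsf{F},s}^{-1}\phi f=\phi G\phi f+\phi G[\Delta_{\mathsf{F},s},\phi_1]u-\phi R_L\phi_1u .
\]
Since $[\Delta_{\mathsf{F},s},\phi_1]$ is supported in $\operatorname{supp}\mathrm{d}\phi_1$, disjoint from $\operatorname{supp}\phi$, the kernel of $\phi G[\Delta_{\mathsf{F},s},\phi_1]$ avoids both the lifted diagonal and the scattering front face, hence is residual; composed with the bounded map $f\mapsto u$ on the scales $\bar H^{-1,r}\to\dot H^{1,r}$ (and iterating with further nested cutoffs to get arbitrary orders and weights, for the operator and its adjoint) this yields an error in $\Psi_{sc}^{-\infty,-\infty}(X)$. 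With this substitution your outline becomes the argument of \cite[Corollary 4.6]{SUV2}; without it, the proof as written does not go through.
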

%\begin{proof}
%By Lemma \ref{dd1}, $\Delta_{\mathsf{F},s}$ has a parametrix $B\in\Psi_{sc}^{-2,0}(X)$ so that
%\[
%B\Delta_{\mathsf{F},s}=\mathrm{Id}+F_L,~~\Delta_{\mathsf{F},s}B=\mathrm{Id}+F_R,
%\]
%with $F_{L},F_R\in \Psi_{sc}^{-\infty,-\infty}(X)$. Let $\psi\in C^\infty_c(\overline{\Omega}\setminus\partial_{\mathrm{int}}\Omega_j)$ be identically $1$ on $\mathrm{supp}\phi$. Then
%\[
%\psi=B\Delta_{\mathsf{F},s}\psi-F_L\psi= B\psi\Delta_{\mathsf{F},s}
%\]
%\end{proof}
~\\
\begin{corollary}\label{coro47}
Let $\phi\in C_c^\infty(\overline{\Omega_j}\setminus\partial_{\mathrm{int}}\Omega_j)$, $\chi\in C^\infty(\overline{\Omega_j})$ with disjoint support and with $\chi$ constant near $\partial_{\mathrm{int}}\Omega$. Let $\mathsf{F},\mathsf{F}_0$ as in Corollary \textnormal{\ref{coro46}}. Then the operator $\chi\Delta_{\mathsf{F},s}^{-1}\phi:\bar{H}_{sc}^{-1,k}(\Omega_j)\rightarrow\dot{H}^{1,k}_{sc}(\Omega_j)$ in fact maps $H^{s,r}_{sc}(X)\rightarrow\dot{H}_{sc}^{1,k}(\Omega_j)$ for all $s,r,k$.

Similarly, $\phi\Delta_{\mathsf{F},s}^{-1}\chi:\bar{H}_{sc}^{-1,k}(\Omega_j)\rightarrow\dot{H}^{1,k}_{sc}(\Omega_j)$ in fact maps $\bar{H}_{sc}^{-1,k}(\Omega_j)\rightarrow H^{s,r}_{sc}(X)$ for all $s,r,k$.
\end{corollary}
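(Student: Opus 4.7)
The plan is to mirror the argument for \cite[Corollary 4.7]{SUV2}, replacing the $1$- and $2$-tensor calculations there with the symmetric $3$-tensor ellipticity of $\Delta_{\mathsf{F},s}$ established in Lemma \ref{dd1}, together with the invertibility statements just proven. The underlying principle is that $\Delta_{\mathsf{F},s}^{-1}$, although defined globally only through a Dirichlet problem, is locally a scattering pseudodifferential operator of order $(-2,0)$ by Corollary \ref{coro46}, and hence its Schwartz kernel is smoothing off the diagonal and rapidly decaying at the scattering face. Disjoint cutoffs $\chi$ and $\phi$ therefore suppress all the singular behavior of the kernel, and the resulting operator should map between scattering Sobolev spaces of arbitrary orders.

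Concretely, I would first choose an intermediate cutoff $\tilde\phi\in C_c^\infty(\overline{\Omega_j}\setminus\partial_{\mathrm{int}}\Omega_j)$ with $\tilde\phi\phi=\phi$ and $\supp\tilde\phi\cap\supp\chi=\emptyset$, and write
\[
\chi\Delta_{\mathsf{F},s}^{-1}\phi=\chi\bigl(\tilde\phi\Delta_{\mathsf{F},s}^{-1}\tilde\phi\bigr)\phi+\chi(1-\tilde\phi)\Delta_{\mathsf{F},s}^{-1}\phi.
\]
By Corollary \ref{coro46}, the operator in parentheses lies in $\Psi_{sc}^{-2,0}(X)$. Since $\chi$ and $\tilde\phi$ have disjoint supports, the first term is a scattering pseudodifferential operator whose left and right cutoffs have disjoint supports; pseudolocality in the scattering calculus then places it in $\Psi_{sc}^{-\infty,-\infty}(X)$, which maps $H_{sc}^{s,r}(X)$ to $H_{sc}^{s',r'}(X)$ for any $s',r'$. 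The second term is analyzed by elliptic boundary regularity: with $u=\Delta_{\mathsf{F},s}^{-1}\phi v$, we have $\Delta_{\mathsf{F},s}u=0$ on a neighborhood of $\supp(\chi(1-\tilde\phi))$ and $u\big|_{\partial_{\mathrm{int}}\Omega_j}=0$. Because $\chi$ is constant near $\partial_{\mathrm{int}}\Omega_j$, the product $\chi u$ inherits this vanishing trace, so the joint interior and boundary regularity statement for the Dirichlet problem of the elliptic operator $\Delta_{\mathsf{F},s}$ (Lemma \ref{dd1}) gives arbitrary $\bar H^{s,\infty}_{sc}$ regularity for $\chi u$. Iterating the gain in scattering weight through the standard bootstrap with the weighted version of the same lemma yields the full range of indices $(s,r)$.

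The companion statement for $\phi\Delta_{\mathsf{F},s}^{-1}\chi$ follows by duality. The formal adjoint of $\Delta_{\mathsf{F},s}=\delta_\mathsf{F}^s\mathrm{d}^s_\mathsf{F}$ with respect to the scattering $L^2$ inner product is $\Delta_{-\mathsf{F},s}$, which enjoys the same ellipticity and invertibility (the sign of $\mathsf{F}$ enters only through the $\mathsf{F}^2$ coercivity term and is irrelevant). Thus $(\phi\Delta_{\mathsf{F},s}^{-1}\chi)^*=\chi\Delta_{-\mathsf{F},s}^{-1}\phi$ falls under the first part, and taking adjoints of the mapping property gives the desired statement.

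The main obstacle is reconciling two different notions of boundary in the same argument: the scattering face at $\partial X$ (handled by pseudodifferential calculus and weights) and the interior Dirichlet boundary $\partial_{\mathrm{int}}\Omega_j$ (handled by elliptic boundary regularity). The assumption that $\chi$ is constant near $\partial_{\mathrm{int}}\Omega_j$ is tailored to this difficulty: it ensures that applying $\chi$ preserves the Dirichlet trace up to a constant scaling, so that $\chi u$ inherits the vanishing boundary data needed for the elliptic boundary bootstrap, while still leaving $\supp\chi$ disjoint from $\supp\phi$ so that off-diagonal smoothing in the scattering calculus kicks in.
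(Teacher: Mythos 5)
Your proposal is correct and follows exactly the route the paper intends: the paper gives no proof of this corollary, deferring to the parallel Corollary 4.7 of \cite{SUV2}, whose argument is precisely your combination of (i) the vanishing of $\Delta_{\mathsf{F},s}u$ near $\supp\chi$ plus elliptic interior/Dirichlet-boundary regularity and the weighted invertibility for the decay bootstrap, and (ii) duality for the second mapping property. Two harmless slips: since you require $\supp\tilde\phi\cap\supp\chi=\emptyset$, the first term $\chi\bigl(\tilde\phi\Delta_{\mathsf{F},s}^{-1}\tilde\phi\bigr)\phi$ is identically zero (so the pseudolocality discussion is vacuous), and because $\delta^s_{\mathsf{F}}$ is \emph{defined} as the scattering-$L^2$ adjoint of $\mathrm{d}^s_{\mathsf{F}}$, the operator $\Delta_{\mathsf{F},s}=\delta^s_{\mathsf{F}}\mathrm{d}^s_{\mathsf{F}}$ is already formally self-adjoint, so the adjoint in your duality step is $\Delta_{\mathsf{F},s}$ itself rather than $\Delta_{-\mathsf{F},s}$ — which only simplifies your argument.
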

~\\
\begin{corollary}\label{coro48}
Let $\phi\in C_c^\infty(\overline{\Omega_j}\setminus \partial_{\mathrm{int}}\Omega_j)$, $\chi\in C^\infty(\overline{\Omega_j})$ with disjoint support and with $\chi$ constant near $\partial_{\mathrm{int}}\Omega_j$. Let $\mathsf{F},\mathsf{F}_0$ as in Corollary \textnormal{\ref{coro46}}.

Then $\phi\mathcal{S}_{\mathsf{F},\Omega_j}\phi\in\Psi_{sc}^{0,0}(X)$, while $\chi\mathcal{S}_{\mathsf{F},\Omega_j}\phi:H^{s,r}_{sc}(X)\rightarrow x^k L^2_{sc}(\Omega_j)$ and $\phi\mathcal{S}_{\mathsf{F},\Omega_j}\chi:x^k L^2_{sc}(\Omega_j)\rightarrow H^{s,r}_{sc}(X)$ for all $s,r,k$.
\end{corollary}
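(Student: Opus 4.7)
The plan is to unwind the definition $\mathcal{S}_{\mathsf{F},\Omega_j} = \mathrm{Id} - \mathrm{d}^s_\mathsf{F}\Delta_{\mathsf{F},s}^{-1}\delta^s_\mathsf{F}$ and let Corollaries~\ref{coro46} and~\ref{coro47} do all the analytic work on the $\Delta_{\mathsf{F},s}^{-1}$ factor; the remaining step is to bolt the first-order operators $\mathrm{d}^s_\mathsf{F}$ and $\delta^s_\mathsf{F}$ back on using that they lie in $\mathrm{Diff}_{sc}^{1,0}$, which is a subset of the scattering pseudodifferential calculus.

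For the first claim I would write
\[
\phi\mathcal{S}_{\mathsf{F},\Omega_j}\phi = \phi^2 - \phi\mathrm{d}^s_\mathsf{F}\Delta_{\mathsf{F},s}^{-1}\delta^s_\mathsf{F}\phi,
\]
observe that $\phi^2\in\Psi_{sc}^{0,0}(X)$ trivially, and then pick a cutoff $\tilde\phi\in C_c^\infty(\overline{\Omega_j}\setminus\partial_{\mathrm{int}}\Omega_j)$ equal to $1$ on a neighborhood of $\supp\phi$. Because $\mathrm{d}^s_\mathsf{F}$ and $\delta^s_\mathsf{F}$ are local, the insertion
\[
\phi\mathrm{d}^s_\mathsf{F}\Delta_{\mathsf{F},s}^{-1}\delta^s_\mathsf{F}\phi = (\phi\mathrm{d}^s_\mathsf{F})\,(\tilde\phi\Delta_{\mathsf{F},s}^{-1}\tilde\phi)\,(\delta^s_\mathsf{F}\phi)
\]
is exact. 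Corollary~\ref{coro46} puts the inner factor in $\Psi_{sc}^{-2,0}(X)$, and the composition $(\mathrm{Diff}_{sc}^{1,0})\circ\Psi_{sc}^{-2,0}\circ(\mathrm{Diff}_{sc}^{1,0})\subset\Psi_{sc}^{0,0}$ delivers the conclusion.

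For the off-diagonal statements, disjointness of supports gives $\chi\phi=0$, whence
\[
\chi\mathcal{S}_{\mathsf{F},\Omega_j}\phi = -\chi\mathrm{d}^s_\mathsf{F}\Delta_{\mathsf{F},s}^{-1}\delta^s_\mathsf{F}\phi.
\]
I would then choose $\tilde\chi\in C^\infty(\overline{\Omega_j})$ equal to $1$ near $\supp\chi$, constant near $\partial_{\mathrm{int}}\Omega_j$, and with support disjoint from $\tilde\phi$, and rewrite this as
\[
-(\chi\mathrm{d}^s_\mathsf{F}\tilde\chi)\,(\tilde\chi\Delta_{\mathsf{F},s}^{-1}\tilde\phi)\,(\delta^s_\mathsf{F}\phi).
\]
Corollary~\ref{coro47} makes the middle factor send $H_{sc}^{s',r'}(X)$ into $\dot H_{sc}^{1,k}(\Omega_j)$ for every $s',r',k$. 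Bounding the compactly supported first-order operator on the right between standard Sobolev spaces, and the order-one cutoff operator on the left from $\dot H_{sc}^{1,k}$ into $x^kL^2_{sc}$, yields the claimed mapping $H_{sc}^{s,r}(X)\to x^kL^2_{sc}(\Omega_j)$ for all $s,r,k$. The statement for $\phi\mathcal{S}_{\mathsf{F},\Omega_j}\chi$ follows either by repeating the argument with the roles of the cutoffs swapped, or by passing to the $M(m)$-weighted adjoint.

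The main obstacle is bookkeeping rather than analysis: one must check that the cutoff insertions $\tilde\phi,\tilde\chi$ are truly exact, not just correct modulo smoothing errors. This reduces to the locality of $\mathrm{d}^s_\mathsf{F}$ and $\delta^s_\mathsf{F}$, combined with the hypothesis that $\chi$ is constant in a neighborhood of $\partial_{\mathrm{int}}\Omega_j$ so that the commutator $[\mathrm{d}^s_\mathsf{F},\chi]$ vanishes there; this is what prevents the interior-boundary face from contributing unwanted terms. Modulo this careful choice of cutoffs, the proof is the exact analogue of \cite[Corollary~4.8]{SUV2} in the $4$-tensor setting.
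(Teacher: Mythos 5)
Your argument is correct and is essentially the intended one: the paper states this corollary without proof, deferring to the parallel result \cite[Corollary 4.8]{SUV2}, whose proof is exactly your decomposition --- peel off the identity term via $\phi^2$ (resp.\ kill it via $\chi\phi=0$), insert enlarged cutoffs $\tilde\phi,\tilde\chi$ exactly using the locality of $\mathrm{d}^s_\mathsf{F},\delta^s_\mathsf{F}\in\mathrm{Diff}^{1,0}_{sc}$, and then invoke Corollary \ref{coro46} for the diagonal term and Corollary \ref{coro47} for the off-diagonal ones. Your bookkeeping of the cutoff insertions and of the order/weight counting in the compositions is sound, so nothing further is needed.
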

~\\

We also have properties in parallel to Lemmas 4.9 to 4.13, and then
arrive at the main, local, result

\begin{theorem}
For $\Omega=\Omega_c$, $c>0$ small, there exists $\mathsf{F}_0>0$
large enough, such that for $\mathsf{F}>\mathsf{F}_0$, the geodesic
ray transform on symmetric $4$-tensors $f\in
e^{\mathsf{F}/x}L_{sc}^2(\Omega)$ satisfying
$\delta^s(e^{-2\mathsf{F}/x}f)=0$, is injective.% with a stability estimates. Here the stability is in the sense
%that for $s\geq 0$ there exist $R,R'$ such that for any sufficiently negative $r$ that
%the $e^{\mathsf{F}/x}H_{sc}^{s-1,r}$ norm of $f$ on $\Omega$ is controlled by the
%$e^{\mathsf{F}/x}H_{sc}^{s,r+R}$ norm of $I_4f$, provide that $f$ is a priori in $e^{\mathsf{F}/x}H_{sc}^{s,r+R'}$.
\end{theorem}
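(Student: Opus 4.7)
The plan is to transpose the solenoidal-gauge parametrix scheme of \cite[Sections 4--5]{SUV2} from ranks $1,2$ to rank $4$, using the ingredients assembled above. Set $f_\mathsf{F}=e^{-\mathsf{F}/x}f \in L^2_{sc}(\Omega)$. Since $\delta^s_\mathsf{F}=e^{-\mathsf{F}/x}\delta^s e^{\mathsf{F}/x}$, the hypothesis $\delta^s(e^{-2\mathsf{F}/x}f)=0$ is simply $\delta^s_\mathsf{F} f_\mathsf{F} = 0$. Extending $f_\mathsf{F}$ by zero to $X$ and choosing the cutoff $\chi$ so that $L$ integrates only along $\Omega$-local geodesics, the vanishing of $I_4f$ on $\mathcal{M}_\Omega$ yields $N_\mathsf{F} f_\mathsf{F} = e^{-\mathsf{F}/x}L I_4 f = 0$ on a neighborhood $\tilde{\Omega}$ of $X\cap M$. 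Both terms in $A_\mathsf{F}=N_\mathsf{F}+\mathrm{d}^s_\mathsf{F} M \delta^s_\mathsf{F}$ therefore annihilate $f_\mathsf{F}$, giving
\[
A_\mathsf{F} f_\mathsf{F} = 0 \quad \text{on } \tilde{\Omega}.
\]

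By the Proposition at the end of Section~2, $A_\mathsf{F}$ is elliptic in $\Psi_{sc}^{-1,0}(X;\mathrm{Sym}^{4}{}^{sc}T^*X,\mathrm{Sym}^{4}{}^{sc}T^*X)$ on $\tilde{\Omega}$. I would construct a left parametrix $B\in\Psi_{sc}^{1,0}$ with $BA_\mathsf{F}=\mathrm{Id}+E$, where $E$ is scattering-residual; applying $B$ to $A_\mathsf{F}f_\mathsf{F}=0$ gives the fixed-point identity $f_\mathsf{F} = -Ef_\mathsf{F}$. Corollaries~\ref{coro46}--\ref{coro48} (plus the parallel analogues of \cite[Lemmas 4.9--4.13]{SUV2} mentioned just above) allow one to dress this parametrix with the solenoidal projector $\mathcal{S}_{\mathsf{F},\Omega}=\mathrm{Id}-\mathrm{d}^s_\mathsf{F}\Delta^{-1}_{\mathsf{F},s}\delta^s_\mathsf{F}$, whose well-definedness relies precisely on the invertibility of $\Delta_{\mathsf{F},s}$ on symmetric $3$-tensors and the Poincar\'e-type estimate proved above. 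Combining these ingredients as in \cite{SUV2}, the error $E$ satisfies a bound $\|Ef_\mathsf{F}\|_{L^2_{sc}(\Omega_c)}\leq C(\mathsf{F},c)\|f_\mathsf{F}\|_{L^2_{sc}(\Omega_c)}$ with $C(\mathsf{F},c)\to 0$ as $c\to 0^+$ for each fixed $\mathsf{F}>\mathsf{F}_0$. For $c\in(0,c_0)$ small, $C(\mathsf{F},c)<\tfrac12$ and the contraction forces $f_\mathsf{F}\equiv 0$, hence $f=0$. The uniform statement on $\Omega_{\tau,c}$ follows from the continuous dependence of all constants on $\tau$ in the given compact range where strict concavity persists.

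The main obstacle is not this final step, which is routine once all the pieces are in place, but the smallness estimate on $E$ in the rank-$4$ setting: one must track the $\mathrm{Sym}^4$-valued Schwartz kernel of the residual through the solenoidal projection and verify decay at the scattering front face uniformly, analogous to the arguments in \cite[Lemmas 4.9--4.13]{SUV2}. The algebraic complexity comes from the $5\times 5$ block structure of $\mathfrak{D}$ in Lemma~\ref{dd}, but the analytic content is parallel to the lower-rank cases already settled in \cite{SUV2,UV}, so the present theorem reduces to a careful, if lengthy, bookkeeping of the reductions performed in Section~2 and the beginning of Section~\ref{mainproof}.
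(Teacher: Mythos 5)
Your proposal is correct and follows essentially the same route as the paper, which itself gives no detailed proof of this theorem but defers to the parametrix/gauge argument of \cite[Section 4]{SUV2} once the rank-$4$ ingredients (ellipticity of $A_\mathsf{F}$, invertibility of $\Delta_{\mathsf{F},s}$ on symmetric $3$-tensors, and the Corollaries \ref{coro46}--\ref{coro48} analogues) are in place. Your translation of the gauge condition to $\delta^s_\mathsf{F}f_\mathsf{F}=0$, the observation that $A_\mathsf{F}f_\mathsf{F}=0$, and the absorption of the parametrix error on $\Omega_c$ via the factor $x\leq c$ are exactly the intended argument.
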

 
The above local theorem leads to the global result, Theorem
\ref{mainth} similar to \cite[Theorem 4.19]{SUV2}.

\section*{Acknowledgments}

M. V. de Hoop gratefully acknowledges support from the Simons
Foundation under the MATH + X program, the National Science Foundation
under grant DMS-1815143, and the corporate members of the
Geo-Mathematical Group at Rice University. G. Uhlmann was partly supported by NSF, a Walker Family Endowed Professorship at UW and a Si-Yuan Professorship at IAS, HKUST. J. Zhai acknowledges the great hospitality of IAS, HKUST, where this work was initiated.

{}
\end{document}